\documentclass[11pt]{article}

\usepackage{thmtools}
\usepackage{thm-restate}
\usepackage{bm}
\usepackage{mathrsfs}           %
\usepackage{tikz}
\usetikzlibrary{positioning,chains,fit,shapes,calc}
\usepackage{algorithm}
\usepackage{algpseudocode}
\usepackage{mathtools}

\usepackage{subcaption}
\usepackage{enumerate}

\usepackage{amsmath,amssymb, amsthm}    %
\usepackage{verbatim}           %
\usepackage{xspace}             %
\usepackage{graphicx,float}     %
\usepackage{ifthen,calc}        %
\usepackage{textcomp}           %
\usepackage{fancybox}           %
\usepackage{hhline}             %
\usepackage{float}              %

\usepackage{booktabs}       %
\usepackage{amsfonts}       %
\usepackage{nicefrac}       %
\usepackage{microtype}      %

\usepackage{booktabs}
\usepackage{multirow}
\usepackage{bbm}
\usepackage{graphicx}
\usepackage{amssymb,amsmath}
\usepackage{epstopdf}
\usepackage{algorithm,algcompatible}
\usepackage{mathtools}
\usepackage{wrapfig}
\usepackage{soul}
\usepackage{tikz}

\usepackage[vflt]{floatflt}     %
\usepackage[small,compact]{titlesec}
\usepackage{setspace}

\usepackage{enumitem}
\setenumerate[1]{label={(\arabic*)}}
\usepackage[algo2e, ruled,vlined, boxed,linesnumbered]{algorithm2e}
\usepackage{color}
\definecolor{ForestGreen}{rgb}{0.2,0.2,0.6}
\usepackage{thumbpdf}
\usepackage[
colorlinks=true,
linkcolor=ForestGreen,
citecolor=ForestGreen,
bookmarks,bookmarksopen,bookmarksnumbered]
{hyperref}
\usepackage[margin = 1in]{geometry}
\newcommand{\acks}[1]{\section*{Acknowledgments}#1}
\newcommand{\showccc}[0]{0}
\newcommand{\ccc}[2][nothing]{%
	\ifthenelse{\showccc=0}{}{
		\ensuremath{^{\Lsh\Rsh}}\marginpar{\raggedright\tiny\textsf{%
				\ifthenelse{\equal{#1}{nothing}}{}{\textbf{#1}\\}#2}}}}

\usepackage{natbib}
\newtheorem{theorem}{Theorem}

\newtheorem{proposition}{Proposition}
\newtheorem{corollary}{Corollary}

\newtheorem{remark}{Remark}
\newtheorem{lemma}{Lemma}

\usepackage{url}

\mathtoolsset{showonlyrefs}

\newcommand{\mE}{{\mathbb E}}

\newcommand{\mR}{{\mathbb R}}

\newcommand{\cM}{{\mathcal M}}
\newcommand{\cN}{{\mathcal N}}
\newcommand{\cO}{{\mathcal O}}

\newcommand{\one}{{\mathbf 1}}

\def\TV{{\textup{TV}}}
\def\PI{{\textup{PI}}}
\def\LSI{{\textup{LSI}}}

\def\<{{\langle}}
\def\>{{\rangle}}

\def\d{{\rm{d}}}

\newcommand{\Pc}{Poincar\'e\xspace}
\newcommand{\Ren}{R\'enyi \xspace}

\newcommand{\Var}{{\text{Var}}}

\newcommand{\fan}[1]{{\color{black}{#1}}}

\newcommand{\ide}{\mathbf{I}}
\def\Rd{{\mathbb{R}^d}}

\def\Cconst{{(1+\alpha)\left(\frac{1}{\alpha}\right)^{\frac{\alpha}{1+\alpha}}\left(\frac{1}{\pi^2}\right)^{\frac{1}{1+\alpha}}2^{\frac{1-\alpha}{1+\alpha}}}}

\def\Ciconst{{(1+\alpha_j)\left(\frac{1}{\alpha_j}\right)^{\frac{\alpha_j}{1+\alpha_j}}\left(\frac{1}{\pi^2}\right)^{\frac{1}{1+\alpha_j}}2^{\frac{1-\alpha_j}{1+\alpha_j}}}}

\def\Cepshalf{{(1+\alpha)\left(\frac{1}{\alpha}\right)^{\frac{\alpha}{1+\alpha}}\left(\frac{1}{\pi^2}\right)^{\frac{1}{1+\alpha}}2^{\frac{1-2\alpha}{1+\alpha}}}}

\def\Ceta{{(1+\alpha)\left(\frac{1}{\alpha}\right)^{\frac{\alpha}{1+\alpha}}\left(\frac{1}{\pi^2}\right)^{\frac{1}{1+\alpha}}2^{\frac{-1-2\alpha}{1+\alpha}} (\log2)^{\frac{2}{1+\alpha}} }}

\def\Cjepshalf{{(1+{\alpha_j})\left(\frac{1}{{\alpha_j}}\right)^{\frac{{\alpha_j}}{1+{\alpha_j}}}\left(\frac{1}{\pi^2}\right)^{\frac{1}{1+{\alpha_j}}}2^{\frac{1-2{\alpha_j}}{1+{\alpha_j}}}}}

\def\Consteta{{49}}
\def\TwiceConsteta{{98}}
\def\RtConsteta{{7}}

\everypar{\looseness=-1}

\begin{document}

	\begin{titlepage}
		\def\thepage{}
		\thispagestyle{empty}
		
		\title{Improved dimension dependence of \\
  a proximal algorithm for sampling} 
		
		\date{}
		\author{
			Jiaojiao Fan \thanks{Georgia Institute of Technology, {\tt jiaojiaofan@gatech.edu}. First two authors contribute equally.}
			\and
			Bo Yuan\thanks{Georgia Institute of Technology, {\tt byuan48@gatech.edu}}
			\and
			Yongxin Chen\thanks{Georgia Institute of Technology, {\tt yongchen@gatech.edu}}
  }
		
		\maketitle
		
\abstract{
We propose a sampling algorithm that achieves superior complexity bounds in all the classical settings (strongly log-concave, log-concave, Logarithmic-Sobolev inequality (LSI), Poincar\'e inequality) as well as more general settings with semi-smooth or composite potentials. Our algorithm is based on the proximal sampler introduced in~\citet{lee2021structured}. The performance of this proximal sampler is determined by that of the restricted Gaussian oracle (RGO), a key step in the proximal sampler. The main contribution of this work is an inexact realization of RGO based on approximate rejection sampling. To bound the inexactness of RGO, we establish a new concentration inequality for semi-smooth functions over Gaussian distributions, extending the well-known concentration inequality for Lipschitz functions. Applying our RGO implementation to the proximal sampler, we achieve state-of-the-art complexity bounds in almost all settings. For instance, for strongly log-concave distributions, our method has complexity bound $\tilde\cO(\kappa d^{1/2})$ without warm start, better than the minimax bound for MALA. For distributions satisfying the LSI, our bound is $\tilde \cO(\hat \kappa d^{1/2})$ where $\hat \kappa$ is the ratio between smoothness and the LSI constant, better than all existing bounds. 
}
 		
	\end{titlepage}

\section{Introduction}

The task of sampling from a target distribution $\nu\propto \exp(-f) $ on $\mR^d$ plays an instrumental role in Bayesian inference~\citep{ghosal2017fundamentals}, scientific computing~\citep{pulido2019sequential}, and machine learning~\citep{murphy2012machine,liu2016stein,Fan2021VariationalWG}. 
Myriad works have been devoted to
the theoretical analysis of sampling, ranging from the smooth strongly log-concave setting~\citep{dalalyan2017theoretical,vempala2019rapid,durmus2019analysis} to non-log-concave~\citep{chewi2021analysis} or non-smooth settings~\citep{durmus2018efficient,salim2020primal,fan2022nesterov}.

In this work we make inroads towards better non-asymptotic complexity bound for sampling by focusing on 
the proximal sampler \citep{lee2021structured}. The proximal sampler is essentially a Gibbs sampler over an augmented distribution based on the target distribution. 
The difficulty of implementing the proximal sampler comes from restricted
Gaussian oracle (RGO) -- a task of sampling from $\exp(-f(\cdot) - \frac{1}{ 2\eta} \|x-y\|^2)$ for some given step size $\eta>0$ and $y \in \mR^d$. 
Given that RGO is implementable and exact, the proximal sampler can converge exponentially fast to the target distribution exponentially under mild assumptions~\citep{lee2021structured,chen2022improved}.
However, the total complexity of the algorithm heavily depends on the implementation of RGO. Except for some special settings \citep{gopi2022private}, the best-known dimension dependence of the proximal sampler is $\tilde \cO(d)$~\citep{chen2022improved,liang2022proximal}.
This is worse than the best-known bounds of other sampling methods such as underdamped Langevin Monte Carlo (ULMC)~\citep{shen2019randomized} or Metropolis-Adjusted Langevin Algorithm (MALA)~\citep{wu2021minimax}.

In this paper, we aim to improve the dimension dependence of the RGO and thus the proximal sampler. We first introduce an inexact RGO algorithm based on approximate rejection sampling. The underpinning of our analysis for this RGO algorithm is a novel Gaussian concentration inequality for semi-smooth functions, which extends the Gaussian concentration inequality for Lipschitz functions~\citep[Theorem 5.6]{boucheron2013concentration}. Our proof is based on the argument of Maurey and Pisier.
Our RGO algorithm is an inexact algorithm, and the crux is to bound the step size such that the output of our RGO is close enough to the exact RGO.
The RGO step size is then processed to only depend on the order of the aforementioned concentration inequality.

\paragraph{Contribution}
First, we propose a novel realization of RGO. 
Our RGO implementation is inexact but can achieve a better step size pertaining to the dimension. Next,
we prove a Gaussian concentration inequality for semi-smooth functions, which could be of independent interest. 
It can recover the order of the well-known Gaussian concentration inequality for Lipschitz functions.
This concentration inequality is the underpinning of our RGO analysis.
Third, we control the accumulated error from our inexact RGO algorithm in terms of both total variation and Wasserstein metric. This, combined with the existing proximal sampler convergence results, gives state-of-the-art sampling convergence results under various conditions
(see Table \ref{tab:t1}). Finally, we extend all the results for semi-smooth potential \eqref{eq:semi-smooth} to composite potential \eqref{eq:composite} (see Table \ref{tab:t2}).

\begin{table}[H]
\caption{Complexity bounds for sampling from semi-smooth potentials satisfying \eqref{eq:semi-smooth}. Here $ L_1, \delta,C_\LSI, C_\PI, \cM_2, \cM_4$ denote the smoothness constant, accuracy, LSI constant, \Pc inequality constant,  second moment, and fourth moment of the target distribution. 
}\label{tab:t1}
\begin{centering}
{\renewcommand{\arraystretch}{1.4}%
\begin{tabular}{|c|c|c|c|}
\hline
            Assumption           & Source                & Complexity                      & Metric                \\ \hline
\multirow{4}{*}{\begin{tabular}[c]{@{}c@{}} $\beta$-strongly \\ log-concave\end{tabular}} & \citet{chen2022improved}                 & $\tilde \cO(L_1 d/\beta)$                 &      \Ren                  \\ \cline{2-4} 
                                      &  \citet{wu2021minimax} & $\tilde \Omega (L_1 \sqrt{d}/\beta)$ (Warm start) & TV \\ \cline{2-4} 
                                      &  \citet{shen2019randomized} & $\tilde \cO  \left(
    (\frac{L_1}{\beta})^{7/6} ( \frac{2}{\delta} \sqrt{\frac{d}{\beta} } )^{1/3}
    +
    \frac{L_1}{\beta} ( \frac{2}{\delta } \sqrt{\frac{d}{\beta} } )^{2/3}
    \right) $  & $W_2$ \\ \cline{2-4}                                       
                                      & Proposition \ref{prop:strongly_convex}, \ref{prop:chi_strongly_conv_appro} & $\tilde \cO(L_1 \sqrt{d}/\beta)$  & TV/$W_2$/$\chi^2$ \\ \hline                                      
\multirow{2}{*}{log-concave}        & \citet{liang2022proximal}                      &   $\tilde \cO(\sqrt{\cM_4} L_\alpha^{\frac{2}{\alpha+1}} d /\delta  )$                    &  TV                    \\ \cline{2-4} 
                                      & Proposition \ref{prop:convex} 
                                      & $\tilde \cO(\cM_2 L_\alpha^{ \frac{2}{\alpha+1} } d^{\frac{\alpha}{\alpha+1}} / \delta )$ & TV \\ \hline
\multirow{2}{*}{log-Sobolev}        & \citet{chen2022improved}                      &   $\tilde \cO( L_1 d /C_\LSI  )$                    &  \Ren                    \\ \cline{2-4} 
                                      & Proposition \ref{prop:lsi}, \ref{prop:chi_appro_all} & $\tilde \cO( L_1 \sqrt{d} /C_\LSI )$ & TV/$\chi^2$ \\ \hline    
\multirow{2}{*}{\Pc}        & \citet{Liang2022APA}                      &   $\tilde \cO( {L_\alpha^{\frac{2}{\alpha+1}}d^2 }/{C_\PI}  )$                    &  \Ren                    \\ \cline{2-4} 
                                      & Proposition \ref{prop:pi}, \ref{prop:chi_appro_all} & $\tilde \cO( {L_\alpha^{\frac{2}{\alpha+1}}d^{\frac{2\alpha+1}{\alpha+1}} }/{C_\PI} )$ & TV/$\chi^2$ \\ \hline                                       
\end{tabular}
}
\par\end{centering}
\end{table}

\paragraph{Related works}
Our RGO algorithm is inspired by a rejection sampling-based RGO~\citep{gopi2022private} for distributions with Lipschitz potentials. Compared to their algorithm, we add a linear function to the potential to have a delicate stationary point,
and we simplify the rejection rule by eliminating part of the randomness.
The convergence of the proximal sampler is established for strongly log-concave distributions in \citet{lee2021structured}. \citet{chen2022improved} then extended the class of target distributions to a much wilder range, including \Pc inequality. Some other works~\citep{liang2021proximal,liang2022proximal,Liang2022APA,gopi2022private} consider the convergence under weaker smoothness conditions, e.g. semi-smooth potential or composite potential. We also mention a concurrent work \citep{Altschuler2023} that achieves similar complexity bounds as ours for smooth potentials with a very different RGO implementation based on MALA and ULMC. 
Other than the analysis for the proximal sampler, there exist numerous works for other sampling methods. To name a few, ~\citet{wu2021minimax,shen2019randomized} study strongly-log-concave and smooth potential, ~\citet{chewi2021analysis, erdogdu2021convergence,erdogdu2022convergence} study the non-log-concave potential, and \citet{nguyen2021unadjusted,durmus2018efficient,durmus2019analysis,salim2020primal,bernton2018langevin} study the composite potential.
More detailed discussions appear in \S \ref{sec:convergence}. 

\paragraph{Comparison to the cocurrent work \citep{Altschuler2023}}
\textbf{Algorithm}: We both use the proximal sampler but with different implementations of RGO. Our RGO is based on approximate rejection sampling and has complexity $\mathcal{O}(1)$. In contrast, their RGO is based on MALA with a ULMC warm start and has complexity $\tilde{\mathcal{O}}(\sqrt{d})$. Our RGO is easier in implementation and parameter tuning.
\textbf{Results}: For log-smooth distribution, we share the same complexity results in $\chi^2$.
Our results cover the semi-smooth and composite potentials, but they only cover log-smooth distributions. Thus, our complexity results are more general and include theirs as a special case. \textbf{Contributions}: The main contribution of our work includes a new RGO implementation and a new concentration inequality. The main contribution of \citet{Altschuler2023} is a warm start result using ULMC.  One disadvantage of our result is that it is not clear how our method can be used outside the proximal sampler framework, but their warm start method is applicable in various algorithms. 

\begin{table}[H]
\caption{Complexity bounds for sampling from composite potentials satisfying \eqref{eq:composite}.}\label{tab:t2}
\begin{centering}
{\renewcommand{\arraystretch}{1.4}%
\begin{tabular}{|c|c|c|c|}
\hline
            Assumption           & Source                & Complexity                      & Metric                \\ \hline
\multirow{3}{*}{log-Sobolev}        & \citet{nguyen2021unadjusted}                      &   $\tilde \cO( n \max\{L_{\alpha_j}^2\} d/\delta)^{\max\{ 1/\alpha_j\} } /C_\LSI^{1+ \max \{1/\alpha_j \} }  )$                    &  KL                   \\ \cline{2-4} 
  & \citet{Liang2022APA}                      &   $\tilde \cO(\sum_{j=1}^n L_{\alpha_j}^{2/(\alpha_j +1)} d /C_\LSI  )$                    &  KL                   \\ \cline{2-4} 
                                      & Proposition \ref{prop:composite} & $\tilde \cO\left( {   \left( \sum_{j=1}^n L_{\alpha_j}^{{1}/{(\alpha_j+1)}} d^{ {\alpha_j}/{ (2(\alpha_j+1)) } } \right)^2 }/{ C_\LSI }  \right) $ & TV \\ \hline    
\multirow{2}{*}{\Pc}        & \citet{Liang2022APA}                      &   $\tilde \cO( \sum_{j=1}^n L_{\alpha_j }^{2/(\alpha_j +1)} d^2 /{C_\PI}  )$                    &  \Ren                    \\ \cline{2-4} 
                                      & Proposition \ref{prop:composite} & $\tilde \cO\left( {   \left( \sum_{j=1}^n L_{\alpha_j}^{{1}/{(\alpha_j+1)}} d^{ {\alpha_j}/{ (2(\alpha_j+1)) } } \right)^2 }/{ C_\PI }  \right) $ & TV \\ \hline                                       
\end{tabular}
}
\par\end{centering}
\end{table}

\paragraph{Organization}
The paper is organized as follows. In \S \ref{sec:proximal}, we review the proximal sampler. We then introduce our RGO implementation and present the improved dimension dependence of RGO in \S \ref{sec:exact_rgo}. We establish the sampling convergence under different conditions in \S \ref{sec:convergence}. 
In \S \ref{sec:composite}, we extend all the analysis (\S \ref{sec:exact_rgo}-\ref{sec:convergence}) for semi-smooth potentials to the composite potentials.
We conclude in \S \ref{sec:conclude} with a discussion of future research directions.

\section{Background: the proximal sampler}\label{sec:proximal}

We consider sampling from the distribution $\nu \propto \exp(-f(x))$
where the potential $f$ is bounded from below and is $L_\alpha$-$\alpha$-semi-smooth, i.e., $f$ satisfies
\begin{align}\label{eq:semi-smooth}
\|f'(u) - f'(v) \| \le L_\alpha \|u -v\|^\alpha, ~~\forall u,v \in \mR^d
\end{align}
for $L_\alpha>0$ and $\alpha \in [0,1]$. Here $f'$ represents a subgradient of $f$. 
When $\alpha>0$, this subgradient can be replaced by the gradient.
The condition \eqref{eq:semi-smooth} implies $f$ is $L_1$-smooth when $\alpha=1$ and a Lipschitz function satisfies \eqref{eq:semi-smooth} with $\alpha=0$.

\begin{algorithm2e}
\caption{The proximal Sampler \citep{lee2021structured} \label{algo:AlternatingSampler}}

\textbf{Input:} Target distribution $\exp(-f(x))$, step size $\eta>0$, initial
point $x_{0}$

\For{$t = 1,\ldots, T$}{

Sample $y_t \sim \pi^{Y|X} (y|x_{t-1}) \propto \exp(- \frac{1}{2\eta} \|x_{t-1} - y\|^2 ) $

Sample $x_t \sim \pi^{X|Y} (x|y_t) \propto \exp(-f (x)-\frac{1}{2\eta}\|x-y_{t}\|^{2})$ \label{line:sample_oracle}

}

\textbf{Return} $x_{T}$

\end{algorithm2e}

The algorithm we adopt is the proximal sampler (or the alternating sampler) proposed by \citet{lee2021structured}, shown in Algorithm \ref{algo:AlternatingSampler}.
It is essentially a Gibbs sampling method for the joint distribution $\pi^{XY} (x,y) \propto \exp\left(  -f(x) - \frac{1}{2\eta} \|x-y\|^2 \right)$.
The target distribution $\nu$ is the $X$-marginal distribution of $\pi^{XY}$.
The proximal sampler alternates between two 
 sampling steps. The first one is to sample from the conditional distribution of $Y$ given $x_{t-1}$; it is a Gaussian distribution $\pi^{Y|X}(y|x_{t-1}) = \cN(x_{t-1}, \eta \ide)$ and thus trivial to implement. The paramount part of this method is the 
second sub-step,
which is the restricted Gaussian oracle for $f$ to sample from the conditional distribution
\begin{align}
    \pi^{X|Y}(x|y_t) \propto \exp\left(-f(x) - \frac{1}{2\eta} \|x-y_t\|^2 \right),
\end{align}
which is not always easy to implement. To simplify the notation, we often use $\pi^{X|Y}$ to represent $\pi^{X|Y}(x|y)$.
If line \ref{line:sample_oracle} can be implemented exactly, then the proximal sampler is unbiased because the iterates $\{(x_t,y_t)\}_{t \in \mathbb{N}}$ form a reversible Markov chain with the stationary distribution $\pi^{XY}$. 

\subsection{Convergence of proximal sampler given exact RGO}\label{sec:proximal_convergence}
Next we briefly discuss the convergence property of Algorithm \ref{algo:AlternatingSampler}.
Recall that a probability distribution $\nu$ satisfies log-Sobolev inequality (LSI) with constant $C_\LSI>0$ ($C_\LSI$-LSI) if for all smooth functions $u: \mR^d \rightarrow \mR$, the following holds:
\begin{align}\tag{LSI} \label{eq:lsi}
    \mE_\nu[u^2 \log{u^2} ] -\mE_\nu[u^2] \log \mE_\nu[u^2] \le \frac{2}{C_\LSI} \mE_\nu [ \|\nabla u\|^2].    
\end{align}
Denote Wasserstein-2 distance as $W_2(\nu,\mu ) := \inf_{\gamma \in \Pi (\nu,\mu) } \int \|x-y\|^2 \d \gamma (x,y)  $, where $\Pi(\nu,\mu )$ is the set of joint distributions of marginal distributions $\nu$ and $\mu$. 
For a probability measure $\mu \ll \nu, $ we define the KL divergence $H_\nu(\mu ) := \int \mu \log \frac{\mu}{\nu}$, and the chi-squared divergence $\chi_\nu^2 (\mu) := \int \frac{\mu^2}{\nu} -1 $.

A distribution $\nu$ satisfies \eqref{eq:lsi} implies that it also satisfies Talagrand inequality~\citep{otto2000generalization}, i.e., $W_2(\nu,\mu) \le \sqrt{\frac{2}{C_\LSI} H_\nu ( \mu)}$ for any probability distribution $\mu \ll \nu$ with finite second moment.
A probability distribution $\nu$ satisfies the \Pc inequality (PI) with constant $C_\PI >0 $ if for any smooth bounded function $u: \mR^d \rightarrow \mR$, it holds that
\begin{align}\tag{PI}
\label{eq:pi}
\Var_\nu (u) \le \frac{1}{C_\PI} \mE[\|\nabla u\|^2 ].
\end{align}
The \eqref{eq:lsi} implies the \eqref{eq:pi} with the same constant. 
In the following theorem, we assume $x_0$ is sampled from some initialization distribution $\mu_0$. 

\begin{theorem}[Convergence of proximal sampler \citep{chen2022improved}]\label{thm:proximal_convergence}
Assuming 
 the RGO is exact, we denote the corresponding iterates $y_t \sim \psi_t $ and $x_t \sim \mu_t $. 

 1) If $\nu$ is log-concave (i.e. $f$ is convex), $H_\nu (\mu_t) \le {W_2^2 (\mu_0, \nu )}/{(t \eta)}; $

 2) If $\nu$ satisfies $C_\LSI$-\ref{eq:lsi}, $H_\nu (\mu_t) \le {H_\nu (\mu_0 )}/{(1+ C_\LSI \eta)^{2t}};$

 3) If $\nu$ satisfies $C_\PI$-\ref{eq:pi}, $   \chi^2_\nu(\mu_t) \le {\chi^2_\nu(\mu_0)}/{(1+ C_\PI \eta)^{2t} }.$
\end{theorem}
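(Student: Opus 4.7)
The plan is to exploit the Gibbs-sampler structure on the augmented density $\pi^{XY}(x,y)\propto \exp(-f(x)-\tfrac{1}{2\eta}\|x-y\|^2)$, whose $X$-marginal is $\nu$ and whose $Y$-marginal is the Gaussian convolution $\pi^Y=\nu\ast\cN(0,\eta\ide)$. Each outer iteration of Algorithm \ref{algo:AlternatingSampler} factors as $\mu_{t-1}\xrightarrow{\text{forward}}\psi_t=\mu_{t-1}\ast\cN(0,\eta\ide)\xrightarrow{\text{backward}}\mu_t$. The first step is to apply the data processing inequality to the backward half-step: since the joint law of $(x_t,y_t)$ is $\psi_t(y)\pi^{X|Y}(x|y)$ while $\pi^{XY}(x,y)=\pi^Y(y)\pi^{X|Y}(x|y)$, for any $f$-divergence $D$,
\begin{equation}
D(\mu_t\,\|\,\nu)\le D\bigl(\psi_t(y)\pi^{X|Y}(x|y)\,\big\|\,\pi^Y(y)\pi^{X|Y}(x|y)\bigr)=D(\psi_t\,\|\,\pi^Y),
\end{equation}
reducing the task to controlling a one-step contraction of $D$ along Gaussian smoothing by $\cN(0,\eta\ide)$.

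For cases (2) and (3) I would then establish the iteration-level contraction
\begin{equation}
D\bigl(\mu_{t-1}\ast\cN(0,\eta\ide)\,\big\|\,\nu\ast\cN(0,\eta\ide)\bigr)\le (1+C\eta)^{-2}\,D(\mu_{t-1}\,\|\,\nu),
\end{equation}
with $(C,D)=(C_{\LSI},H_\nu)$ for (2) and $(C,D)=(C_{\PI},\chi_\nu^2)$ for (3); iterating across $t$ steps then yields the theorem. To prove the single-step contraction I would realize one outer iteration as a coupled SDE flow: the forward half is the time-$\eta$ marginal of Brownian motion, and the backward half is the associated reverse-time process whose drift is the score of the intermediate-time marginals $\nu_s:=\nu\ast\cN(0,s\ide)$. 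Differentiating $D(\mu_{t-1}\ast\cN(0,s\ide)\,\|\,\nu_s)$ in $s$ along this flow and applying a Bakry--\'Emery type computation---which converts the LSI or PI of $\nu$ into dissipation of $D$ along \emph{both} half-steps---produces one factor of $(1+C\eta)^{-1}$ per half-step, and combining them gives the square.

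For case (1), since $\nu$ need not have a spectral gap, I would argue via Wasserstein distance. Convexity of $f$ makes the posterior $\pi^{X|Y=y}$ strongly log-concave with parameter $1/\eta$, so the backward step is a $W_2$-contraction, and combined with the non-expansion of Gaussian smoothing this gives $W_2(\mu_t,\nu)\le W_2(\mu_{t-1},\nu)$. Pairing this with a one-step EVI-type energy inequality of the form
\begin{equation}
2\eta\,H_\nu(\mu_t)\le W_2^2(\mu_{t-1},\nu)-W_2^2(\mu_t,\nu),
\end{equation}
derived by tracking entropy dissipation along the coupled flow and invoking convexity of $f$, telescoping across $t$ iterations and using monotonicity of $H_\nu(\mu_s)$ along the chain yields the $O(W_2^2(\mu_0,\nu)/(t\eta))$ rate.

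The main obstacle is the \emph{square} in $(1+C\eta)^{-2}$: naively combining ``forward Gaussian convolution is non-expansive'' with ``backward is data processing'' only yields one factor of $(1+C\eta)^{-1}$ per iteration. The sharper rate forces one to treat the two half-steps simultaneously via the coupled forward/reverse-heat-flow representation, and the analysis hinges on uniform regularity of the denoiser $\mE[X\mid Y]$ (Tweedie's formula) for the augmented measure, which is what ultimately transfers the LSI/PI of $\nu$ into dissipation estimates along both halves of each iteration.
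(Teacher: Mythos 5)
This theorem is quoted by the paper from \citet{chen2022improved} without an internal proof, so the comparison is against the argument there; your sketch is indeed aimed at that argument, but it has a genuine gap in parts (2) and (3). Your plan first spends the backward half-step on the data processing inequality, reducing everything to the claim that Gaussian smoothing alone contracts the divergence by the \emph{squared} factor,
\begin{equation}
D\bigl(\mu_{t-1}\ast\cN(0,\eta\ide)\,\big\|\,\nu\ast\cN(0,\eta\ide)\bigr)\le (1+C\eta)^{-2}\,D(\mu_{t-1}\,\|\,\nu),
\end{equation}
and this inequality is false. Take $\nu=\cN(0,C^{-1}\ide)$ (which satisfies $C$-LSI) and $\mu_{t-1}=\cN(m,C^{-1}\ide)$: then $H_\nu(\mu_{t-1})=\tfrac{C}{2}\|m\|^2$, while after convolving both with $\cN(0,\eta\ide)$ the KL divergence is $\tfrac{\|m\|^2}{2(C^{-1}+\eta)}=\tfrac{C\|m\|^2}{2(1+C\eta)}$, so the forward half-step contracts by exactly $(1+C\eta)^{-1}$, never by the square (the same computation with $\chi^2$ shows the PI case fails too). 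Consequently, once you have replaced the backward step by mere non-expansion via DPI, the rate $(1+C\eta)^{-2t}$ is out of reach along the route you set out.

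Your closing paragraph correctly names the obstacle, but the fix you gesture at contradicts the reduction you committed to: the second factor per iteration must come from a \emph{strict} contraction of the RGO channel itself, not from the convolution step. That is precisely what \citet{chen2022improved} prove: the RGO step is realized as the time reversal of the heat flow started from $\pi^{XY}$ (the same Doob $h$-transform representation this paper reuses in the proof of Lemma \ref{lem:w2_telecope}), and one runs the simultaneous-flow/Bakry--\'Emery differential inequality along that reverse SDE, using that $\nu\ast\cN(0,s\ide)$ satisfies LSI (resp.\ PI) with constant $(C^{-1}+s)^{-1}$, to get a second factor of $(1+C\eta)^{-1}$; multiplying the two half-step factors gives the square. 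So you need to drop the DPI reduction and supply the backward-step dissipation estimate explicitly. For part (1), your EVI-plus-telescoping outline is close in spirit to the cited proof of the log-concave case, but the one-step inequality $2\eta H_\nu(\mu_t)\le W_2^2(\mu_{t-1},\nu)-W_2^2(\mu_t,\nu)$ again requires tracking entropy dissipation through \emph{both} half-steps (forward heat flow and its reversal), so it inherits the same missing ingredient rather than following from contraction of the backward kernel alone.
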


Note that if $\nu$ is $\beta$-strongly-log-concave, then $\nu$ satisfies $\beta$-LSI. So the convergence of strongly-log-concave $\nu$ is also implicitly contained in Theorem \ref{thm:proximal_convergence}.

\subsection{Existing RGO implementations}
Except for a few special cases with closed-form realizations \citep{lee2021structured,mou2022efficient}, most of the existing implementations of RGO are based on rejection sampling. 
Denote the function $f^\eta_y: = f(x) + \frac{1}{2 \eta} \|x-y\|^2$.
If $\pi^{X|Y}$ is strongly-log-concave, i.e., $f^\eta_y$ is strongly-convex and
smooth, 
one can naturally use a Gaussian with variance being the convexity of $f_y^\eta$ and mean being the minimizer of $f_y^\eta$ as the proposal distribution. In this case, 
with the step size $\eta = \Theta (1/(L_1 d))$, the expected number of iterations for rejection sampling is $\cO(1)$~\citep{liang2021proximal,chewi2022query}.
When the potential $f$ is not smooth but $L_0$-Lipschitz, \citet{liang2021proximal} shows that a similar proposal gives $\cO(1)$ complexity for rejection sampling when the step size is $\eta = \Theta (1/(L_0^2 d))$. In fact, it can be shown that, at least for quadratic potentials, the dependence $\eta = \Theta (1/d)$ is inevitable to ensure $\cO(1)$ complexity for rejection sampling-based RGO.
On the other hand, \citet{gopi2022private} proposes an approximate rejection sampling scheme for an inexact RGO for Lipschitz $f$ which uses a larger step size $\eta = \tilde \cO (1/L_0^2)$ to ensure $\cO(1)$ complexity of RGO, rendering better dimension dependence of the proximal sampler. On a different route, one can apply any MCMC algorithm to implement RGO with dimension-free step size $\eta$, but the complexity of each RGO step is dimension dependent, thus, the overall complexity of this strategy is not necessarily better \citep{lee2021structured}.

\section{Improved dimension dependence of RGO for semi-smooth potential}\label{sec:exact_rgo}

The step size $\eta$ of the proximal sampler is pivotal to the total complexity. As can be seen from Theorem \ref{thm:proximal_convergence} a larger step size points to a faster convergence rate. However, larger $\eta$ also means higher complexity for each RGO step. In \citet{Liang2022APA}, it is shown that, with $\eta = \Theta (1/(L_\alpha^{\frac{2}{\alpha+1}} d))$, it is possible to use rejection sampling to realize RGO with $\cO(1)$ complexity. In this section, we improve the step size to $\eta = \tilde\cO (1/(L_\alpha^{\frac{2}{\alpha+1}} d^{\frac{\alpha}{\alpha+1}}))$ while maintaining the same $\cO(1)$ complexity by designing a new RGO implementation based on approximate rejection sampling.

\begin{algorithm2e}[tbh]
\caption{Approximate rejection sampling implementation of RGO} \label{alg:rgo}

\textbf{Input}: $L_\alpha$-$\alpha$-semi-smooth function $f(x)$, step size $\eta>0$, current
point $y$

Compute $x_y$ such that $ f'(x_y) + \frac{1}{\eta} (x_y - y) =0 $. \label{step:opt}
Denote $g(x) = f(x) - \< f'(x_y) , x \>$. \label{step:proximal_opt}

\Repeat{$u\leq\frac{1}{2}\rho$}{

Sample $x,z$ from the distribution $ 
\phi (\cdot) 
\propto\exp( -\frac{1}{2\eta}\|\cdot - x_y \|^{2}_2)$

$\rho = \exp(g(z) - g(x))$

Sample $u$ uniformly from $[0,1]$.

}

\textbf{Return} $x$

\end{algorithm2e}

Our proposed RGO implementation is in Algorithm \ref{alg:rgo}. This
is a variant of \citet[Algorithm 2]{gopi2022private}. Compared to \citet{gopi2022private}, we eliminate the randomness in the inner loop of rejection sampling. 
More importantly, we modify the proposal distribution to ensure that the mean of the proposal Gaussian distribution $\phi$ is the same as the stationary point of the function $f_y^\eta$. This modification is justified by the following lemma. This modification is crucial to control the accuracy and complexity of our RGO algorithm  via concentration inequality as will be seen in Theorem \ref{thm:concentration}. 

\begin{lemma}\label{lem:equi_rgo}
Sampling from $ \pi^{X|Y} (x|y) \propto \exp(-f (x)-\frac{1}{2\eta}\|x-y\|^{2})$ is equivalent to sampling from distribution $\propto \exp (-  g(x) - \frac{1}{2 \eta} \|x-x_y \|^2 )$, where $g(x) = f(x) - \< f'(x_y) , x \> $ and $x_y$ satisfies $ f'(x_y) + \frac{1}{\eta } (x_y - y) = 0$.
\end{lemma}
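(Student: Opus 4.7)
The plan is to show that the two log-densities differ only by a constant independent of $x$, so the two unnormalized densities are proportional and therefore induce the same probability measure after normalization. This reduces the claim to a direct algebraic verification using the stationarity condition defining $x_y$.

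First I would expand the quadratic in the second expression:
\begin{align}
-g(x) - \frac{1}{2\eta}\|x - x_y\|^2
= -f(x) + \langle f'(x_y), x\rangle - \frac{1}{2\eta}\|x\|^2 + \frac{1}{\eta}\langle x, x_y\rangle - \frac{1}{2\eta}\|x_y\|^2.
\end{align}
Next I would substitute the stationarity condition $f'(x_y) = -\frac{1}{\eta}(x_y - y)$ into the linear term $\langle f'(x_y), x\rangle$, which yields
\begin{align}
\langle f'(x_y), x\rangle + \frac{1}{\eta}\langle x, x_y\rangle = \frac{1}{\eta}\langle y, x\rangle.
\end{align}
Plugging this back in gives
\begin{align}
-g(x) - \frac{1}{2\eta}\|x - x_y\|^2 = -f(x) - \frac{1}{2\eta}\|x\|^2 + \frac{1}{\eta}\langle x, y\rangle - \frac{1}{2\eta}\|x_y\|^2.
\end{align}
Comparing with the expansion $-f(x) - \frac{1}{2\eta}\|x-y\|^2 = -f(x) - \frac{1}{2\eta}\|x\|^2 + \frac{1}{\eta}\langle x, y\rangle - \frac{1}{2\eta}\|y\|^2$, I see that the two log-densities differ by the $x$-independent constant $\frac{1}{2\eta}(\|y\|^2 - \|x_y\|^2)$.

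Since this constant is absorbed into the normalization, the two unnormalized densities define the same probability distribution, which proves the equivalence. The only subtlety worth a brief remark is the existence of $x_y$: the defining equation $f'(x_y) + \frac{1}{\eta}(x_y - y) = 0$ is the first-order optimality condition for $f_y^\eta$, and since $f$ is bounded from below and the quadratic term is coercive, the infimum of $f_y^\eta$ is attained, yielding at least one such $x_y$; any choice works since the lemma only uses the stationarity equation. I do not anticipate any genuine obstacle in this proof since it is a completing-the-square computation.
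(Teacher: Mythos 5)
Your proof is correct and follows essentially the same route as the paper: a direct completing-the-square computation using the stationarity condition $f'(x_y)=-\frac{1}{\eta}(x_y-y)$ to show the two potentials differ by an $x$-independent constant (the paper substitutes $x_y = y-\eta f'(x_y)$ into the quadratic rather than expanding both quadratics fully, but the algebra is the same). The added remark on the existence of $x_y$ is a nice touch but not needed for the lemma as stated.
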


Thanks to Lemma \ref{lem:equi_rgo}, 
sampling from distribution $\propto \exp (-  g(x) - \frac{1}{2 \eta} \|x-x_y \|^2 )$ is equivalent to the original RGO. Note that $g(x)$ shares the same semi-smooth constant as $f(x)$.
Algorithm \ref{alg:rgo} requires the calculation of the stationary point $x_y$ of $f(x) + \frac{1}{2\eta} \|x-y\|^2$. 
This could be challenging when $f$ is non-convex. In that case, we can get an approximate stationary point instead. We can control the error introduced by approximate $x_y$ and achieve the same complexity bound; the detailed discussions are postponed to \S \ref{sec:approximate_opt}. 
In the main paper, we assume the stationary point is solved exactly to make the analysis more comprehensible. To facilitate the analysis of Algorithm \ref{alg:rgo}, we introduce another threshold $\bar \rho:=\min(\rho,2)  $. As we will see, the acceptance probability
in our RGO algorithm
is directly related to $\bar \rho$.

\begin{lemma}\label{lem:dist_relation}
    Denote $\hat \pi^{X|Y}$ as the distribution of the output of Algorithm \ref{alg:rgo}. Let $\phi$ be defined as in Algorithm \ref{alg:rgo}. Define the random variables 
    $\bar \rho: =\min\left(\rho ,2\right)$,
    $V:= \mE [\rho | x],$ and $\overline V: = \mE [ \bar \rho | x]$. Then
    \begin{align}
 \frac{\d \pi^{X|Y} }{ \d x} = \frac{\d \phi}{\d x} \cdot \frac{\exp(-g(x))}{ \mE_{x \sim \phi} \exp(-g(x)) }  
& = \frac{\d \phi}{\d x} \cdot \frac{ \mE [\rho | x]}{ \mE [\rho] } 
 = \frac{\d \phi}{\d x} \cdot \frac{  V }{ \mE [ V ] } , \\
 \frac{\d \hat \pi^{X|Y} }{ \d x}
& = \frac{\d \phi}{\d x} \cdot \frac{ \mE [ \bar \rho | x]}{ \mE [ \bar \rho] } 
= \frac{\d \phi}{\d x} \cdot \frac{  \overline V }{ \mE [ \overline V ] }  .
    \end{align}
    Moreover, the acceptance probability of rejection sampling is $\frac{1}{2}\mE[\bar \rho] = \frac{1}{2}\mE[\overline V]  .$ 
\end{lemma}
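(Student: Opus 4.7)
The first chain of equalities is essentially a reformulation of the exact conditional density. By Lemma~\ref{lem:equi_rgo}, $\pi^{X|Y}(\cdot|y) \propto \exp(-g(x))\,\phi(x)$ up to a normalizing constant, so dividing by the normalizer immediately yields
\[
\frac{\d \pi^{X|Y}}{\d x} = \frac{\d \phi}{\d x}\cdot \frac{\exp(-g(x))}{\mE_{x'\sim\phi}[\exp(-g(x'))]}.
\]
Next I would identify $\mE[\rho\mid x]$ and $\mE[\rho]$ explicitly. Since $x$ and $z$ are drawn independently from $\phi$ in Algorithm~\ref{alg:rgo} and $\rho=\exp(g(z)-g(x))$, conditioning on $x$ gives
$\mE[\rho\mid x] = \exp(-g(x))\,\mE_{z\sim\phi}[\exp(g(z))]$, and marginalizing in $x$ then gives $\mE[\rho] = \mE_{x\sim\phi}[\exp(-g(x))]\cdot \mE_{z\sim\phi}[\exp(g(z))]$. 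The factor $\mE_{z\sim\phi}[\exp(g(z))]$ cancels in the ratio $\mE[\rho\mid x]/\mE[\rho]$, producing the second and (by definition $V=\mE[\rho\mid x]$) third expression for $\d\pi^{X|Y}/\d x$.

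For the second identity, I would analyze the accept/reject mechanism directly. Each loop iteration independently draws $(x,z)\sim\phi\otimes\phi$ and $u\sim \mathrm{Unif}[0,1]$, and accepts iff $u\le \tfrac12\rho$. Since $\rho\ge 0$ may exceed $2$, the conditional acceptance probability given $(x,z)$ equals $\min(\tfrac12\rho,1)=\tfrac12\bar\rho$, and further conditioning on $x$ alone gives $\tfrac12\mE[\bar\rho\mid x]=\tfrac12\overline V$. The standard rejection sampling identity then says that the density of the accepted $x$ is proportional to $\phi(x)\cdot \tfrac12\overline V(x)$; normalizing by $\mE_{x\sim\phi}[\tfrac12\overline V(x)] = \tfrac12\mE[\overline V]$ yields
\[
\frac{\d\hat\pi^{X|Y}}{\d x} = \frac{\d\phi}{\d x}\cdot \frac{\overline V}{\mE[\overline V]} = \frac{\d\phi}{\d x}\cdot \frac{\mE[\bar\rho\mid x]}{\mE[\bar\rho]}.
\]

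Finally, the per-iteration acceptance probability is the marginal $\mE[\tfrac12\bar\rho]=\tfrac12\mE[\bar\rho]=\tfrac12\mE[\overline V]$, where the last equality follows from the tower property $\mE[\bar\rho]=\mE_x[\mE[\bar\rho\mid x]]=\mE[\overline V]$. I do not anticipate a substantive obstacle: the design of the algorithm (in particular the independent draws of $x$ and $z$ from the same proposal and the $\tfrac12$ in the acceptance threshold) makes each identity a direct bookkeeping argument. The only point requiring slight care is handling the truncation $\bar\rho=\min(\rho,2)$ correctly, noting that the acceptance indicator $\mathbf 1\{u\le\rho/2\}$ has conditional expectation $\min(\rho/2,1)$ rather than $\rho/2$, which is precisely what produces $\bar\rho$ in the output density of the approximate (as opposed to exact) rejection sampler.
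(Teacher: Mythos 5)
Your proposal is correct and follows essentially the same route as the paper: Lemma~\ref{lem:equi_rgo} for the first identity, the factorization $\mE[\rho\mid x]=\exp(-g(x))\,\mE_{z\sim\phi}[\exp(g(z))]$ with cancellation of the common factor, and the rejection-sampling identity with $\Pr(u\le\tfrac12\rho\mid x)=\tfrac12\mE[\bar\rho\mid x]$ (your explicit remark that the acceptance indicator has conditional expectation $\min(\rho/2,1)=\tfrac12\bar\rho$ is exactly the step the paper uses via the tower property). No gaps.
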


Lemma \ref{lem:dist_relation} shows that the gap between ground truth $\pi^{X|Y}$ and our return $\hat \pi^{X|Y}$ is caused by the discrepancy between $\rho$ and $\bar \rho$. At a high level, we need to control the probability that $\rho$ and $\bar \rho$ are different by choosing a small enough step size. If $\eta$ is sufficiently small, then the proposal distribution $\phi$ has a small variance and so that $\rho$ is concentrated around 1 with high probability, which means $\rho$ and $\bar \rho$ are equal with high probability.
To give such a probabilistic bound, we 
establish
a
novel concentration inequality bound for a semi-smooth function over the Gaussian random variable.

\begin{theorem}[Gaussian concentration inequality for semi-smooth functions] \label{thm:concentration} 
Let $X \sim \cN(m,\eta\ide)$ be a Gaussian random variable in $\Rd$, and let $\ell$ be an $L_\alpha$-$\alpha$-semi-smooth function. Assume $ \ell'(m) = 0$. Then for any $r>0, ~0 \le \alpha \le 1$, one has 
\begin{align} 
\Pr( \ell(X)-\mE(\ell(X))\geq r) 
& \leq \left(1-\frac{\epsilon}{d} \right)^{-d/2} \exp\left(-\frac{C\epsilon^{\frac{\alpha}{1+\alpha}}r^{\frac{2}{1+\alpha}}}{L_\alpha^{\frac{2}{1+\alpha}}d^{\frac{\alpha}{1+\alpha}}\eta}\right), ~~ \forall \epsilon \in (0,d) \label{eq:alpha01_conc} \\
\text{where} ~~~~~~~~~~~~~~~
C & = \Cconst. \label{eq:conce_C}
\end{align}
\end{theorem}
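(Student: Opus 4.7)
The plan is to bound the moment-generating function of $\ell(X)-\mE\ell(X)$ via a Maurey--Pisier interpolation, reduce the resulting expression to a chi-squared moment using the semi-smoothness at $m$, and then close with Young's inequality and a Chernoff-type optimization.

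For the interpolation, let $Y$ be an independent copy of $X$ and, for $\theta\in[0,\pi/2]$, set $X_\theta:=m+\sin\theta\,(X-m)+\cos\theta\,(Y-m)$ and $X'_\theta:=\cos\theta\,(X-m)-\sin\theta\,(Y-m)$. The pair $(X_\theta,X'_\theta)$ consists of two independent Gaussians distributed as $\cN(m,\eta\ide)$ and $\cN(0,\eta\ide)$, with $X_{\pi/2}=X$ and $X_0=Y$. The chain rule gives $\ell(X)-\ell(Y)=\int_0^{\pi/2}\langle\nabla\ell(X_\theta),X'_\theta\rangle\,d\theta$. Two applications of Jensen's inequality (first to introduce $Y$ via $\mE\ell(Y)=\mE\ell(X)$, then to the uniform average over $\theta$), followed by integrating out the Gaussian $X'_\theta\mid X_\theta$, yield
\[
\mE\bigl[e^{\lambda(\ell(X)-\mE\ell(X))}\bigr]\leq\mE\Bigl[\exp\Bigl(\tfrac{\lambda^2\pi^2\eta}{8}\|\nabla\ell(X)\|^2\Bigr)\Bigr].
\]
Since $\ell'(m)=0$, semi-smoothness gives $\|\nabla\ell(X)\|\leq L_\alpha\|X-m\|^\alpha$, and setting $W:=\|X-m\|^2/\eta\sim\chi^2_d$ reduces the task to controlling $\mE[\exp(cW^\alpha)]$ with $c:=\lambda^2\pi^2\eta^{1+\alpha}L_\alpha^2/8$.

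For $\alpha\in(0,1)$, Young's inequality with conjugate exponents $1/\alpha$ and $1/(1-\alpha)$ gives, for any $s>0$,
\[
cW^\alpha\leq sW+(1-\alpha)\alpha^{\alpha/(1-\alpha)}c^{1/(1-\alpha)}s^{-\alpha/(1-\alpha)},
\]
and combining with the chi-squared MGF $\mE[e^{sW}]=(1-2s)^{-d/2}$ (valid for $s<1/2$) together with the reparametrization $s=\epsilon/(2d)$ produces the factor $(1-\epsilon/d)^{-d/2}$ of the claim and leaves an additional term $\exp(B\lambda^{2/(1-\alpha)})$ with an explicit $B=B(\epsilon,\alpha,L_\alpha,\eta,d)$. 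Markov's inequality converts this into a tail bound, and a standard Fenchel-type minimization of $-\lambda r+B\lambda^{2/(1-\alpha)}$ over $\lambda>0$ (with conjugate exponent $2/(1+\alpha)$) produces an exponent proportional to $-r^{2/(1+\alpha)}\epsilon^{\alpha/(1+\alpha)}/(L_\alpha^{2/(1+\alpha)}d^{\alpha/(1+\alpha)}\eta)$. The endpoint $\alpha=1$ skips Young entirely ($W^1=W$), while $\alpha=0$ corresponds to $s\to 0$ and recovers classical Gaussian concentration for Lipschitz functions up to a $2/\pi^2$ factor.

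The main obstacle will be the constant bookkeeping in the last two steps: the Young step contributes a factor $\alpha^{\alpha/(1-\alpha)}(1-\alpha)$, and the $\lambda$-optimization raises $B$ to the power $(1-\alpha)/(1+\alpha)$, producing another $(1-\alpha)^{(1-\alpha)/(1+\alpha)}$ piece; one must verify that these residual $(1-\alpha)$-dependent factors cancel exactly so that the final constant collapses to the closed form $C$ in~\eqref{eq:conce_C}. Everything else is mechanical.
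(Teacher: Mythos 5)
Your proposal is correct and follows essentially the same route as the paper: Maurey--Pisier interpolation to bound the MGF by $\mE\exp\bigl(\tfrac{\pi^2}{8}\eta\lambda^2\|\nabla\ell\|^2\bigr)$, reduction to a $\chi^2_d$ moment via $\|\nabla\ell(X)\|\le L_\alpha\|X-m\|^\alpha$, Young's inequality to split off the linear-in-$W$ piece, and a final Chernoff optimization. The only cosmetic difference is the last step: the paper reparametrizes $(\lambda,\omega)$ jointly (setting $k\propto\sqrt{c}$ via the constraint $\frac{\pi^2\alpha k^2}{4cL_\alpha^2 d^\alpha\eta^{\alpha+1}}=\epsilon$ and then optimizing over $c$), whereas you fix the $\chi^2$ parameter $s=\epsilon/(2d)$ and run a one-variable Fenchel minimization in $\lambda$; these are the same reduction in disguise, and the constants indeed collapse to the stated $C$ (the $(1-\alpha)$ from Young cancels against the $(1-\alpha)$ in the optimal-$\lambda$ formula, and $\alpha^{\alpha/(1-\alpha)}$ is raised to the Fenchel exponent $(1-\alpha)/(1+\alpha)$ to give $\alpha^{\alpha/(1+\alpha)}$).
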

\textbf{Sketch of proof for Theorem \ref{thm:concentration}}:
By Maurey and Pisier argument \citep{pisier2006probabilistic} and Young's inequality $\|G\|^{2\alpha} \leq \alpha\|G\|^2/\omega + (1-\alpha)\omega^{\frac{\alpha}{1-\alpha}} $ for $\forall~  \omega>0$, we have 
\begin{align}
 \Pr(\ell(X)-\mE(\ell(X)) \geq r) 
 \leq &  \inf_{\lambda >0} \frac{\mE_{G}\exp(\frac{\pi^2}{8} L_{\alpha}^2\eta\lambda^2\|G\|_2^{2\alpha})}{\exp(\lambda r)} \\
\le &\frac{\mE_{G}\exp\left(\frac{\pi^2}{8} L_{\alpha}^2\eta\lambda^2(\alpha\|G\|^2/ \omega + (1-\alpha) \omega^{\frac{\alpha}{1-\alpha}})\right)}{\exp(\lambda r)}.
\end{align}
Invoking the closed-form second moment of Gaussian distribution and choosing proper $\lambda$ and $\omega$, we can establish the result. The full proof is in \S \ref{sec:conc_proof}.

In Theorem \ref{thm:concentration}, $\epsilon$ is a tunable parameter, and we leave its choice to the user. It causes a trade-off between the coefficients $\left(1-\frac{\epsilon}{d} \right)^{-d/2}$, which is in front of $\exp()$, and $\epsilon^{\frac{\alpha}{1+\alpha}}$, which is inside $\exp()$. 
We remark that the assumption $\ell'(m)=0$ can be relaxed at the cost of an additional penalty coefficient (see Proposition \ref{prop:errored_concentration}). We also note that when $r$ is in a small range, we can always get sub-Gaussian tail no matter what the $\alpha$ value is (see Proposition \ref{prop:low_range}).

\begin{remark}
One term in the coefficient \eqref{eq:conce_C} in Theorem \ref{thm:concentration} is not well-defined when $\alpha=0$, but $C \rightarrow C_0 = 2/\pi^2$ as $\alpha \rightarrow 0$ and $C$ is monotone w.r.t. $\alpha$. Thus, for any $0\le \alpha \le 1$, we have
\begin{align} 
\Pr( \ell(X)-\mE(\ell(X))\geq r) 
\leq \left(1-\frac{\epsilon}{d} \right)^{-d/2} \exp\left(-\frac{2 \epsilon^{\frac{\alpha}{1+\alpha}}r^{\frac{2}{1+\alpha}}}{ \pi^2 L_\alpha^{\frac{2}{1+\alpha}}d^{\frac{\alpha}{1+\alpha}}\eta}\right), ~~ \forall \epsilon \in (0,d).
\end{align}
\end{remark}

Our concentration bound \eqref{eq:alpha01_conc} can recover the known bounds in two extreme cases. When $\alpha=0$, $\ell$ is Lipschitz, the RHS of \eqref{eq:alpha01_conc} is $\propto \exp(- r^2/(L_0^2 \eta))$. This recovers the sub-Gaussian tail for Lipschitz functions~\citep{ledoux1999concentration,boucheron2013concentration}.
When $\alpha=1$, the RHS of \eqref{eq:alpha01_conc} is $\propto \exp(- r /(L_1 \sqrt{d} \eta))$, which recovers the sub-exponential tail in the Laurent-Massart bound~\citep{laurent2000adaptive} for $\chi^2$ distribution and  Hanson-Wright inequality~\citep{hanson1971bound,wright1973bound,rudelson2013hanson} for large enough $r$.
A more detailed discussion 
is in \S \ref{sec:conc_relation}.

With this concentration inequality, we are able to find a small enough $\eta$ such that $\rho$ and $\bar \rho$ are the same with high probability, and thus the difference between $\hat \pi^{X|Y}$ and $\pi^{X|Y}$ is small.
 \begin{theorem}[RGO complexity in total variation]\label{thm:rgo}
Assume $f$ satisfies \eqref{eq:semi-smooth}. For $\forall \zeta>0$,
if
$$\eta \le 
\left(\Consteta L_\alpha^{\frac{2}{\alpha+1}}d^{\frac{\alpha}{\alpha+1}}(1+ \log (1+ 12/\zeta )) \right)^{-1}
,$$
then Algorithm \ref{alg:rgo} returns a random point $x$ that has $\zeta$ total variation distance to the distribution proportional to $
\pi^{X|Y}(\cdot | y)
$. Furthermore, if $~0 < \zeta < 1$, then the algorithm access $\cO(1)$ many $f(x)$ in expectation.
 \end{theorem}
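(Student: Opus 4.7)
The plan is to reduce the total-variation bound, via Lemma~\ref{lem:dist_relation}, to an estimate on $\mE[(\rho-2)_+]$, and then to control this tail expectation by applying the new semi-smooth Gaussian concentration inequality Theorem~\ref{thm:concentration} to $g$. For the TV reduction, I would decompose
\[
\frac{V}{\mE[V]}-\frac{\overline V}{\mE[\overline V]}=\frac{V-\overline V}{\mE[V]}+\overline V\,\frac{\mE[\overline V]-\mE[V]}{\mE[V]\,\mE[\overline V]}.
\]
Since $\bar\rho=\min(\rho,2)\le\rho$, we have $\overline V\le V$ pointwise, and integrating the absolute value against $\phi$ gives
\[
\|\pi^{X|Y}-\hat\pi^{X|Y}\|_{\TV}\le \frac{\mE[V-\overline V]}{\mE[V]}=\frac{\mE[(\rho-2)_+]}{\mE[\rho]}.
\]
Because $z,x\sim\phi$ are i.i.d., Jensen's inequality yields $\mE[\rho]=\mE\exp(g(z)-g(x))\ge 1$, so the TV error is bounded by $\mE[(\rho-2)_+]$.

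To control this tail expectation, I would invoke Theorem~\ref{thm:concentration} at the centre $m=x_y$: by the construction on line~\ref{step:proximal_opt} one has $g'(x_y)=f'(x_y)-f'(x_y)=0$, and $g$ inherits the $(L_\alpha,\alpha)$-semi-smoothness of $f$. Applying Theorem~\ref{thm:concentration} to both $g$ and $-g$ with $X\sim\cN(x_y,\eta\ide)=\phi$ yields two-sided tails, and the half-and-half split $\{g(z)-g(x)>t\}\subset\{g(z)-\mE g>t/2\}\cup\{\mE g-g(x)>t/2\}$ gives a sub-Weibull bound
\[
\Pr(g(z)-g(x)>t)\le 4A\exp\!\left(-\tilde B\,t^{2/(1+\alpha)}\right),
\]
with $A=(1-\epsilon/d)^{-d/2}$ and $\tilde B=C\epsilon^{\alpha/(1+\alpha)}2^{-2/(1+\alpha)}/(L_\alpha^{2/(1+\alpha)} d^{\alpha/(1+\alpha)}\eta)$. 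Choosing $\epsilon$ a small absolute constant (say $\epsilon=1$) makes $A=\Theta(1)$ uniformly in $d$. Then, writing $\tau:=g(z)-g(x)$ and using $(\rho-2)_+\le e^{\tau}\mathbf{1}_{\tau>\log 2}$ together with integration by parts,
\[
\mE[(\rho-2)_+]\le 2\Pr(\tau>\log 2)+\int_{\log 2}^{\infty}e^t\Pr(\tau>t)\,\d t.
\]
Since $2/(1+\alpha)\ge 1$, the exponent $t-\tilde B\,t^{2/(1+\alpha)}$ eventually becomes negative; requiring $\tilde B\ge c(1+\log(1+12/\zeta))$ for an explicit absolute $c$ bounds both summands by $\zeta/2$. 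Unwinding $\tilde B$ produces the step-size condition in the statement, with the constant $\Consteta$ arising from a worst-case of the prefactor $C=\Cconst$ and of $2^{-2/(1+\alpha)}$ over $\alpha\in[0,1]$.

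For the complexity, the last assertion of Lemma~\ref{lem:dist_relation} says that each inner iteration is accepted with probability $\mE[\bar\rho]/2$, so the expected number of iterations is $2/\mE[\bar\rho]\le 2/(1-\zeta)=\cO(1)$ for $\zeta\in(0,1)$, using $\mE[\bar\rho]=\mE[\rho]-\mE[(\rho-2)_+]\ge 1-\zeta$. Each iteration uses $\cO(1)$ evaluations of $f$, yielding the claimed query complexity. The main obstacle is the numerical balancing in the second paragraph: the terms $e^t$ and $\exp(-\tilde B\,t^{2/(1+\alpha)})$ must be controlled uniformly in $\alpha\in[0,1]$, with $\alpha\to 1$ being the tight regime since the concentration is then only sub-exponential and the competition between linear and Weibull-like growth is marginal; extracting the clean closed-form condition with the constants $\Consteta$ and $12$ requires a careful worst-case over $\alpha$ of the constant $C$ of \eqref{eq:conce_C}.
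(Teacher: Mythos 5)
Your route is essentially the paper's: reduce via Lemma~\ref{lem:dist_relation} to controlling $\mE[\rho\one_{\rho\ge 2}]$, lower-bound the normalizer by $\mE[\rho]\ge 1$, apply Theorem~\ref{thm:concentration} to $g$ and $-g$ centered at $x_y$ (using $g'(x_y)=0$ and that $g$ inherits the semi-smoothness of $f$), split the two-sided tail at $t/2$, and convert the tail bound into a bound on the exponential moment; your layer-cake/integration-by-parts step is just a continuous version of the paper's geometric series over integers $i$ with $\bar\Delta=\Delta/\log 2$, and the acceptance-probability argument is the same.

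There are, however, concrete slips. The TV reduction as you state it is not valid: from your decomposition you still need the triangle inequality on \emph{both} terms, which gives $\|\pi^{X|Y}-\hat\pi^{X|Y}\|_\TV\le 2\,\mE[V-\overline V]/\mE[V]$ under the paper's convention that $\TV$ is the full $L^1$ distance; dropping the second term to claim the bound $\mE[V-\overline V]/\mE[V]$ is false in general (e.g.\ if $\overline V$ vanishes on half the mass of $\phi$ while $V$ is constant, the $L^1$ distance is $1$ but your bound is $1/2$). So you must target $\mE[(\rho-2)_+]\le\zeta/2$, not $\zeta$. This is only a constant, but the theorem asserts a specific numerical threshold (the $\Consteta$ and the $12$), so the worst-case-over-$\alpha$ bookkeeping you defer is actually needed: the paper takes $\epsilon=1/2$, uses $(1-1/(2d))^{-d/2}\le 3/2$, and invokes the explicit bound \eqref{eq:alpha_bound} on the $\alpha$-dependent constant to land exactly on the stated step size. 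Relatedly, your choice $\epsilon=1$ is outside the admissible range $(0,d)$ when $d=1$; $\epsilon=1/2$ works for all $d\ge 1$. Finally, your query bound $2/\mE[\bar\rho]\le 2/(1-\zeta)$ degenerates as $\zeta\to 1$, whereas the theorem's $\cO(1)$ is uniform; with the corrected target $\mE[(\rho-2)_+]\le\zeta/2$ one gets $\mE[\bar\rho]\ge \mE[\rho]-\mE[(\rho-2)_+]\ge 1-\zeta/2\ge 1/2$, hence acceptance probability at least $1/4$ for all $\zeta\in(0,1)$, which is the paper's argument.
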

 
 \begin{theorem}[RGO complexity in Wasserstein distance]\label{thm:rgo_w2}
Assume $f$ satisfies \eqref{eq:semi-smooth}. For $\forall \zeta>0$, if 
\begin{align}
\eta \le \min \left(  
\left(\Consteta L_\alpha^{\frac{2}{\alpha+1}}d^{\frac{\alpha}{\alpha+1}}(2+ \log( 1+ {192  (d^2 + 2d)}/{ \zeta^4} )) \right)^{-1}
, 1\right)  ,  
\end{align}
then Algorithm \ref{alg:rgo} returns a random point $x$ that has $\zeta$ Wasserstein-2 distance to the distribution $\pi^{X|Y}(\cdot | y)$.  Furthermore, if 
 ~$0< \zeta < 2\sqrt{2d} $, then the algorithm access $\cO(1)$ many $f(x)$ in expectation.
 \end{theorem}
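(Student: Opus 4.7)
The plan is to bootstrap the total variation result of Theorem~\ref{thm:rgo} into a Wasserstein-2 guarantee via a standard maximal-coupling argument. For any two probability measures $\mu,\nu$ on $\mR^d$, taking the coupling $\gamma$ that achieves $\gamma(X\neq Y)=\|\mu-\nu\|_{\TV}$ and applying Cauchy--Schwarz yields
\[
W_2^2(\mu,\nu) \le \mE_\gamma\left[\|X-Y\|^2\ones_{X\neq Y}\right] \le \sqrt{\mE_\gamma[\|X-Y\|^4]}\cdot\sqrt{\|\mu-\nu\|_{\TV}}.
\]
I would apply this with $\mu=\pi^{X|Y}$ and $\nu=\hat\pi^{X|Y}$, introducing a TV tolerance $\zeta'>0$ to be matched to $\zeta$ at the end so that Theorem~\ref{thm:rgo} delivers $\|\mu-\nu\|_{\TV}\le\zeta'$ under the corresponding step-size cap.

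The core technical step is bounding $\mE_\gamma[\|X-Y\|^4]$. Using $(a+b)^4\le 8(a^4+b^4)$, it suffices to bound $\mE_{\pi^{X|Y}}[\|X-x_y\|^4]$ and $\mE_{\hat\pi^{X|Y}}[\|X-x_y\|^4]$ separately. The baseline is $\mE_\phi[\|X-x_y\|^4]=\eta^2(d^2+2d)$ since $\phi=\cN(x_y,\eta\ide)$, so the task reduces to controlling the Radon--Nikodym derivatives against $\phi$. For $\hat\pi^{X|Y}$, Lemma~\ref{lem:dist_relation} gives $\d\hat\pi^{X|Y}/\d\phi=\overline V/\mE[\overline V]\le 2/\mE[\overline V]$, and the acceptance-probability analysis already underlying Theorem~\ref{thm:rgo} shows $\mE[\overline V]=\Omega(1)$. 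For $\pi^{X|Y}$, Lemma~\ref{lem:equi_rgo} rewrites the density ratio to $\phi$ as proportional to $\exp(-g(x))$; the stationarity $g'(x_y)=0$ combined with semi-smoothness yields the pointwise bound $|g(x)-g(x_y)|\le\tfrac{L_\alpha}{1+\alpha}\|x-x_y\|^{1+\alpha}$, which, fed through Theorem~\ref{thm:concentration}, keeps the ratio dimension-independent under the step-size cap. Altogether this should produce $\mE_\gamma[\|X-Y\|^4]\le C_0\,\eta^2(d^2+2d)$ for an absolute constant $C_0$.

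Assembling the pieces gives $W_2^2\le C_1\,\eta\sqrt{d^2+2d}\cdot\sqrt{\zeta'}$ whenever $\eta\le 1$, so choosing $\zeta'=\zeta^4/(C_1^2(d^2+2d))$ makes the right-hand side at most $\zeta^2$. Substituting this $\zeta'$ into the step-size bound from Theorem~\ref{thm:rgo} reproduces the stated condition, with $192=12\cdot C_1^2$ absorbing the conversion constant and the extra $+1$ inside $2+\log(\cdot)$ absorbing the $\eta\le 1$ cap enforced by the $\min$. The $\cO(1)$ query complexity then follows directly from Theorem~\ref{thm:rgo}, since the hypothesis $0<\zeta<2\sqrt{2d}$ forces $\zeta'<1$, the regime where that theorem's expected-$\cO(1)$ guarantee applies. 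The step I expect to be genuinely delicate is controlling $\d\pi^{X|Y}/\d\phi$: unlike the inexact distribution where $\overline V\le 2$ trivially, the weight $V/\mE[V]$ is not uniformly bounded a priori, so the semi-smoothness bound above and the Gaussian concentration of Theorem~\ref{thm:concentration} must be combined carefully to rule out heavy tails that would inflate the fourth moment.
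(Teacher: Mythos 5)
Your skeleton---take the maximal coupling $\gamma$ with $\gamma(X\neq Y)=\|\pi^{X|Y}-\hat\pi^{X|Y}\|_\TV$ and use $W_2^2\le \mE_\gamma[\|X-Y\|^2\one_{X\neq Y}]\le \sqrt{\mE_\gamma\|X-Y\|^4}\,\sqrt{\|\pi^{X|Y}-\hat\pi^{X|Y}\|_\TV}$, then invoke Theorem \ref{thm:rgo} with a tolerance $\zeta'\asymp\zeta^4/(d^2+2d)$---is a legitimate TV-to-$W_2$ conversion and is genuinely different from the paper's argument. The paper instead applies Lemma \ref{lem:tv_w2} to bound $W_2^2$ by the total-variation norm of the signed measure $\pi^{X|Y}-\hat\pi^{X|Y}$ weighted by $\|x-x_y\|^2$, and expands it via the density formulas of Lemma \ref{lem:dist_relation} into the two terms $\mE[\|x-x_y\|^2|V-\overline V|]$ and $\mE|V-\overline V|\cdot\mE[\|x-x_y\|^2\overline V]/\mE\overline V$. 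The payoff is that every expectation there is taken under the Gaussian proposal $\phi$, so the only moments needed are $\mE_\phi\|x-x_y\|^4=\eta^2(d^2+2d)$ and $\mE_\phi[\|x-x_y\|^2\overline V]\le 2\eta d$, while the inexactness enters through $\mE|V-\overline V|^2\le \mE[\rho^2\one_{\rho\ge 2}]$, bounded by the concentration inequality with a doubled exponent; this is exactly where the ``$2+\log$'' slack and the $192(d^2+2d)/\zeta^4$ constants originate. Your route, by contrast, forces you to control $\mE_{\pi^{X|Y}}\|X-x_y\|^4$, a moment of the \emph{exact} conditional law, and that is where your write-up has a real gap.

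Concretely, the statement that semi-smoothness plus Theorem \ref{thm:concentration} ``keeps the ratio dimension-independent'' is not correct as phrased: by Lemma \ref{lem:dist_relation} the density of $\pi^{X|Y}$ with respect to $\phi$ is $V/\mE V\propto e^{-g(x)}$, which is pointwise unbounded (for nonconvex $g$ it grows along directions where $g$ decreases), so no uniform bound on the Radon--Nikodym derivative is available. What does work is moment control: since $-g$ is also $L_\alpha$-$\alpha$-semi-smooth with $(-g)'(x_y)=0$, Theorem \ref{thm:concentration} applies to $-g$, and under the step-size cap the tail of $\mE_\phi g-g(X)$ is strong enough to give $\mE_\phi e^{-2(g-\mE_\phi g)}=\cO(1)$; combined with $\mE_\phi e^{-(g-\mE_\phi g)}\ge 1$ (Jensen) and Cauchy--Schwarz against the Gaussian eighth moment, this yields $\mE_{\pi^{X|Y}}\|X-x_y\|^4=\cO(\eta^2 d^2)$. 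So the gap is repairable, but it is the crux of the theorem and you have asserted rather than established it. A secondary issue is constant bookkeeping: your chain accumulates the $8(a^4+b^4)$ split, the eighth Gaussian moment, and the exponential-moment factor, so it is not verified (and by a rough count is borderline false) that the specific constants in the statement---the prefactor $49$, the ``$2+$'' slack, and $192(d^2+2d)/\zeta^4$---cover your required $\zeta'$; the paper's weighting by $\|x-x_y\|^2$ under $\phi$ avoids the eighth moment entirely and lands on those constants directly. Your final point is fine: for $0<\zeta<2\sqrt{2d}$ your $\zeta'$ is below $1$, and the $\cO(1)$ expected query count follows as in Theorem \ref{thm:rgo} from $\mE|V-\overline V|\le 1/2$ and $\mE\overline V\ge 1/2$.
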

{\bf Sketch of proof for Theorem \ref{thm:rgo} and Theorem \ref{thm:rgo_w2}:}
By definition and some elementary inequalities, the error $ \| \pi^{X|Y} - \hat \pi^{X|Y} \|_\TV $ can be bounded by $2\mE[\rho \one_{\rho \ge 2} ] \le 2 \sum_{i=1}^\infty \exp((i+1) \log 2 ) \Pr( (g(z) -g(x))/\log 2 \ge i) $, which is then bounded by applying our concentration inequality and choosing $\eta = \tilde \cO( 1/(  L_\alpha^{\frac{2}{\alpha+1}}d^{\frac{\alpha}{\alpha+1}} )  ) $. \fan{Specifically, we apply our concentration inequality to $\ell( x ) = g(x) = f(x) -\< f'(x_y) ,x \> $.} Wasserstein distance bound follows similarly. The full proofs are in \S\ref{sec:main_result_proof}.

 Both step sizes have better dimension dependence than existing methods $\cO(1/d)$ for non-convex semi-smooth potential~\citep{Liang2022APA}.
 In addition, Theorem \ref{thm:rgo} can recover Lemma 5.5 in \citet{gopi2022private}. \fan{We further extend the results in Theorem \ref{thm:rgo},  \ref{thm:rgo_w2} to the $\chi^2$-divergence in \S \ref{sec:rgo_chi}.}
 
\section{Improved complexity bounds of proximal sampling for semi-smooth potential}\label{sec:convergence}
Our overall algorithm uses the RGO implementation in Algorithm \ref{alg:rgo} to implement line \ref{line:sample_oracle} in the proximal sampler (Algorithm \ref{algo:AlternatingSampler}).
Combining the theoretical properties of our RGO implementation in \S \ref{sec:exact_rgo}, and the existing convergence results for the exact proximal sampler in Theorem \ref{thm:proximal_convergence},
we establish superior sampling complexity bounds under various conditions. 

Denote the distributions of the iterations $y_t$ and $x_t$ of the ideal proximal sampler by $\psi_t$ and $\mu_t$ respectively. Our RGO implementation is not exact, rendering different distributions along the iterations of the proximal sampler, denoted by $y_t \sim \hat \psi_t $ and $x_t \sim \hat \mu_t $. To establish the final complexity bounds, we need to quantify the difference between $\hat \psi_t$ (or $\hat \mu_t$) and $\psi_t$ (or $\mu_t$) caused by the inexact RGO.
We first present the following lemmas to control the accumulated error of the inexact RGO. Lemma \ref{lem:tv_telecope} is also informally mentioned in \citet[\S A]{lee2021structured}. Lemma \ref{lem:w2_telecope} is proved based on formulating the RGO as a backward diffusion, and then adopting a coupling argument, which is introduced in \citet[\S A]{chen2022improved}. The detailed proofs are in Appendix \ref{sec:lemmas}.

\begin{lemma}\label{lem:tv_telecope}
 Assume the output of Algorithm \ref{alg:rgo} follows 
 $ \hat \pi^{X|Y} (\cdot |y)$ that can achieve $$\left\| \hat \pi^{X|Y} (\cdot|y) -  \pi^{X|Y} (\cdot|y)  \right\|_\TV \le \zeta $$ for $~\forall ~y,$ then
 $\| \hat \mu_T - \mu_T \|_\TV \le \zeta T. $ 
\end{lemma}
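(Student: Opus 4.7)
The plan is to prove the bound by a straightforward induction on $t$, exploiting two standard facts about total variation: (i) the data-processing inequality, so that applying the \emph{same} Markov kernel to two measures cannot increase their TV distance, and (ii) the triangle inequality, which lets us separate the error contributed by the inexact RGO at the current step from the error already present in the previous iterate.

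More concretely, denote by $K$ the Gaussian kernel $K(x,\d y) = \mathcal{N}(x,\eta\ide)(\d y)$ used in the $Y$-step (this kernel is exact and identical on both chains), and let $\Pi(y,\d x) = \pi^{X|Y}(\d x|y)$ and $\hat\Pi(y,\d x) = \hat\pi^{X|Y}(\d x|y)$ be the exact and inexact RGO kernels. With this notation
\begin{align}
\mu_t = (\mu_{t-1} K)\Pi, \qquad \hat\mu_t = (\hat\mu_{t-1} K)\hat\Pi.
\end{align}
Inserting the hybrid measure $(\hat\mu_{t-1}K)\Pi$ and applying the triangle inequality gives
\begin{align}
\|\hat\mu_t - \mu_t\|_\TV \;\le\; \|(\hat\mu_{t-1}K)\hat\Pi - (\hat\mu_{t-1}K)\Pi\|_\TV + \|(\hat\mu_{t-1}K)\Pi - (\mu_{t-1}K)\Pi\|_\TV.
\end{align}
For the second term, two applications of data processing (first through $\Pi$, then through $K$) yield $\|\hat\mu_{t-1} - \mu_{t-1}\|_\TV$. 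For the first term, I would rewrite the TV distance as an integral over $y$ weighted by $\hat\psi_t := \hat\mu_{t-1}K$, i.e.
\begin{align}
\|(\hat\mu_{t-1}K)\hat\Pi - (\hat\mu_{t-1}K)\Pi\|_\TV \;\le\; \int \|\hat\pi^{X|Y}(\cdot|y) - \pi^{X|Y}(\cdot|y)\|_\TV \, \hat\psi_t(\d y) \;\le\; \zeta,
\end{align}
where the last inequality uses the hypothesis uniformly in $y$. Combining the two bounds gives the one-step recursion $\|\hat\mu_t - \mu_t\|_\TV \le \zeta + \|\hat\mu_{t-1} - \mu_{t-1}\|_\TV$, and iterating from the base case $\hat\mu_0 = \mu_0$ (both chains start from the same initialization) yields $\|\hat\mu_T - \mu_T\|_\TV \le \zeta T$.

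I do not anticipate a serious obstacle here: the argument is essentially a ``telescoping'' of per-step TV errors through a common Markov kernel, and the only thing to be slightly careful about is justifying the integral representation of TV distance against a mixing measure (which is immediate from the convexity of TV, or directly from its definition via bounded test functions). The main point worth emphasizing in the write-up is that the Gaussian forward kernel $K$ is shared between both chains, so its contribution drops out entirely under data processing, leaving only the RGO-induced error to accumulate linearly.
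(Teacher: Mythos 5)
Your proof is correct and follows essentially the same route as the paper's: both insert the hybrid measure $(\hat\mu_{t-1}K)\Pi$, bound the exact-kernel contribution by data processing and the RGO contribution by the uniform $\zeta$ bound via convexity of TV, and then telescope from a common initialization. The only difference is cosmetic (Markov-kernel notation versus the paper's explicit integrals over $y$).
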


\begin{lemma}\label{lem:w2_telecope}
 Assume the output of Algorithm \ref{alg:rgo} follows $ \hat \pi^{X|Y} (\cdot|y)$ that can achieve $$W_2 \left( \hat \pi^{X|Y} (\cdot|y) ,  \pi^{X|Y} (\cdot|y)  \right) \le \zeta $$ for $~\forall ~y$, and suppose the target distribution $\nu \propto \exp(-f)$ is log-concave, then $W_2( \hat \mu_T , \mu_T ) \le \zeta T. $
\end{lemma}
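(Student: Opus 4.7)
The plan is to apply a telescoping argument that separates, at each iteration, the one-step error introduced by the inexact RGO from the intrinsic evolution of the \emph{exact} proximal sampler, and then to invoke the $W_2$ non-expansiveness of the exact sampler under log-concavity of $\nu$. For each $t$, I introduce the auxiliary measure $\tilde \mu_t$ defined as the output of one exact proximal sampler step (Gaussian noise followed by the exact RGO $\pi^{X|Y}$) applied to $\hat \mu_{t-1}$. The triangle inequality then gives
\begin{align}
W_2(\hat \mu_t, \mu_t) \le W_2(\hat \mu_t, \tilde \mu_t) + W_2(\tilde \mu_t, \mu_t),
\end{align}
and it remains to bound each term separately and iterate.

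For the first term, both $\tilde \mu_t$ and $\hat \mu_t$ start from the \emph{same} $\hat \mu_{t-1}$, so I couple them by reusing the same Gaussian noise in the forward sub-step; this makes their $Y$-iterates identically distributed as $\hat \psi_t$ and allows me to pair them perfectly. Conditional on $y \sim \hat \psi_t$, I take the optimal $W_2$ coupling between $\pi^{X|Y}(\cdot\mid y)$ and $\hat \pi^{X|Y}(\cdot\mid y)$, whose squared cost is bounded uniformly in $y$ by $\zeta^2$ by hypothesis. Integrating over $y$ yields $W_2(\hat \mu_t, \tilde \mu_t) \le \zeta$.

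The second term is where log-concavity enters: I claim that a single step of the exact proximal sampler is $W_2$-non-expansive, so that $W_2(\tilde \mu_t, \mu_t) \le W_2(\hat \mu_{t-1}, \mu_{t-1})$. Following the approach of \citet[Section A]{chen2022improved}, I would realize the composite step as a coupled pair of stochastic processes: the forward noise step is $1$-Lipschitz in $W_2$ under synchronous Brownian coupling, while the RGO step can be represented as a reverse-time SDE on $[0,\eta]$ whose drift is the score of the heat-regularized density $\nu \ast \cN(0, s\ide)$. Since Gaussian convolution preserves log-concavity, this drift is the gradient of a concave potential and hence monotone in the right direction, so the synchronous pathwise coupling of the backward SDE is also non-expansive in $\mE\|\cdot\|^2$. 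Composing the two sub-steps yields the claimed non-expansiveness.

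Putting the pieces together produces the one-step recursion $W_2(\hat \mu_t, \mu_t) \le W_2(\hat \mu_{t-1}, \mu_{t-1}) + \zeta$, and iterating from the common initialization $\hat \mu_0 = \mu_0$ delivers $W_2(\hat \mu_T, \mu_T) \le \zeta T$, as required. The main obstacle will be making the backward-SDE representation and its synchronous coupling rigorous for general semi-smooth (and possibly non-differentiable) log-concave potentials; once the non-expansiveness of the exact step is secured, the coupling of $\tilde \mu_t$ against $\hat \mu_t$ and the telescoping are essentially bookkeeping.
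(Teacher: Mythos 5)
Your proposal is correct and follows essentially the same route as the paper: the intermediate measure $\tilde\mu_t=\int \pi^{X|Y}(\cdot|y)\,\hat\psi_t(y)\,\d y$, the bound $W_2(\hat\mu_t,\tilde\mu_t)\le\zeta$ via conditioning on a common $y$, and the non-expansiveness of the exact step obtained from $W_2$-contractivity of Gaussian convolution plus the Doob $h$-transform backward-SDE representation of the RGO with a synchronous coupling using log-concavity of $\nu*\cN(0,s\ide)$. The only difference is presentational bookkeeping, so no further comparison is needed.
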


We next establish the complexity bounds of our algorithm in three settings, with strongly convex, convex, and non-convex potentials. The idea of proof is simple. Assume we iterate $T$ times in Algorithm \ref{algo:AlternatingSampler}. If the desired final error is $\delta$,
then we choose the RGO accuracy $\zeta = \Theta(\delta / T)$ by the above lemmas to ensure the accumulated error is small. Plugging the corresponding step size $\eta$ by Theorem \ref{thm:rgo} or \ref{thm:rgo_w2} into Theorem \ref{thm:proximal_convergence} gives the final results. The detailed proofs are in  \S\ref{sec:main_result_proof}.
\fan{Although our results in this section are with respect to TV / $W_2$, we extend them to the $\chi^2$-divergence setting in \S \ref{sec:convegence_chi}.}
\subsection{Strongly convex and smooth potential}

\begin{proposition}\label{prop:strongly_convex}
Suppose $f$ is $\beta$-strongly convex and $L_1$-smooth. Let $ \delta \in (0,1), \eta =
\tilde \cO \left( {1}/{ (L_1 \sqrt{d}) }  \right) 
$.
Then Algorithm \ref{algo:AlternatingSampler}, with Algorithm \ref{alg:rgo} as RGO step and initialization $x_0\sim \mu_0$, can find a random point $x_T$ that has $\delta$ total variation distance to the distribution $\nu \propto \exp(-f(x)) $ in $$T = \cO  \left( \frac{ L_1 \sqrt{d}}{ \beta }  \log \left( \frac{L_1 \sqrt{d}}{ \beta \delta } \right)  \log \left( \frac{ \sqrt{H_\nu (\mu_0)}}{ \delta} \right) \right) $$ steps. And we can find 
$x_T$ 
that has $\delta$ Wasserstein-2 distance to the distribution $\nu  $ in $$
T= \cO \left( \frac{ L_1 \sqrt{d}}{\beta} \log\left( \frac{L_1 d }{\beta \delta} \right) {\log \left( \frac{1}{ \delta } \sqrt{ \frac{ H_\nu (\mu_0)}{ \beta} } \right)} \right) 
$$ steps.
Furthermore,  each step accesses only $\cO(1)$ many $f(x)$ queries in expectation.
\end{proposition}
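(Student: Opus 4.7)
The plan is to split the total sampling error into an exact-sampler error and an accumulated RGO error, choose the per-step RGO accuracy $\zeta$ and step size $\eta$ so that each contributes at most $\delta/2$, and then plug into the convergence bounds of Theorem~\ref{thm:proximal_convergence}. Since $f$ is $\beta$-strongly convex, $\nu$ is $\beta$-strongly log-concave and in particular satisfies $\beta$-LSI, so the exact proximal sampler obeys $H_\nu(\mu_T) \le H_\nu(\mu_0)/(1+\beta\eta)^{2T}$. Also, since $\alpha=1$ and $L_\alpha=L_1$, Theorem~\ref{thm:rgo} and Theorem~\ref{thm:rgo_w2} give us inexact RGO implementations with $\eta = \tilde\cO(1/(L_1\sqrt{d}))$ and expected $\cO(1)$ queries per step.

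For the TV bound, apply the triangle inequality
\begin{align}
\|\hat\mu_T - \nu\|_\TV \le \|\hat\mu_T - \mu_T\|_\TV + \|\mu_T - \nu\|_\TV.
\end{align}
By Lemma~\ref{lem:tv_telecope} the first term is at most $\zeta T$; by Pinsker and Theorem~\ref{thm:proximal_convergence}(2) the second term is at most $\sqrt{H_\nu(\mu_0)/2}\,(1+\beta\eta)^{-T}$. Setting both at most $\delta/2$ yields $\zeta = \delta/(2T)$ and $T \gtrsim \log(\sqrt{H_\nu(\mu_0)}/\delta)/\log(1+\beta\eta) \approx \log(\sqrt{H_\nu(\mu_0)}/\delta)/(\beta\eta)$. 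Plug $\zeta = \delta/(2T)$ into Theorem~\ref{thm:rgo}, which forces $\eta \le 1/(49\,L_1\sqrt{d}\,(1+\log(1+24T/\delta)))$; since the target $T$ is polynomial in $L_1,d,1/\beta,1/\delta$, the logarithm is absorbed into $\tilde\cO$ and one obtains $T = \cO((L_1\sqrt{d}/\beta)\log(L_1\sqrt{d}/(\beta\delta))\log(\sqrt{H_\nu(\mu_0)}/\delta))$, as claimed.

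For the $W_2$ bound, the argument is parallel but uses Talagrand's inequality (implied by LSI) in place of Pinsker:
\begin{align}
W_2(\hat\mu_T, \nu) \le W_2(\hat\mu_T,\mu_T) + W_2(\mu_T,\nu) \le \zeta T + \sqrt{2H_\nu(\mu_T)/\beta},
\end{align}
where the accumulated error is bounded via Lemma~\ref{lem:w2_telecope} (applicable because $\nu$ is log-concave). Demanding both terms be $\le \delta/2$ gives $\zeta = \delta/(2T)$ and $T \gtrsim \log(\sqrt{H_\nu(\mu_0)/\beta}/\delta)/(\beta\eta)$. Now use Theorem~\ref{thm:rgo_w2} with $\zeta = \delta/(2T)$; since the allowable $\eta$ only shrinks by a factor $\log(1+d^2 T^4/\delta^4) = \tilde\cO(\log(L_1 d/(\beta\delta)))$, one still has $\eta = \tilde\cO(1/(L_1\sqrt{d}))$, and substituting back gives the stated iteration count with the extra $\log(L_1 d/(\beta\delta))$ factor.

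The main technical obstacle is the mild circular dependence between $T$ and $\eta$: the per-step accuracy $\zeta = \delta/(2T)$ makes the logarithm appearing in the RGO step-size bound depend on $T$ itself, while $T \propto 1/\eta$. This is resolved by observing that $T$ is ultimately polynomial in $L_1, d, 1/\beta, 1/\delta$, so one can bound the logarithm a priori by $\log(L_1 d/(\beta\delta))$ and verify the resulting $\eta$ and $T$ are consistent, producing the $\tilde\cO$ dimension dependence $L_1\sqrt{d}/\beta$. The $\cO(1)$-per-step query complexity is inherited directly from the rejection-sampling guarantee in Theorems~\ref{thm:rgo} and \ref{thm:rgo_w2}, since $\zeta = \delta/(2T) \in (0,1)$ (resp.\ $(0,2\sqrt{2d})$) in our regime.
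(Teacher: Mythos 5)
Your proposal is correct and follows essentially the same route as the paper's proof: a triangle-inequality split into the exact proximal-sampler error (controlled via Pinsker, respectively Talagrand, together with Theorem~\ref{thm:proximal_convergence}) and the accumulated inexact-RGO error (controlled via Lemmas~\ref{lem:tv_telecope} and \ref{lem:w2_telecope} with $\zeta = \delta/(2T)$ and the step sizes of Theorems~\ref{thm:rgo} and \ref{thm:rgo_w2}). Your explicit resolution of the $T$--$\eta$ circularity is merely left implicit in the paper, and the resulting bounds coincide.
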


In this most classical setting, we compare our results with others in the literature.
Denote $\kappa: = L_1 / \beta $ as the condition number. By 
Lemma \ref{lem:comparison} and \citet[\S A]{chewi2021analysis}, we assume $H_\nu (\mu_0) = \cO(d)$. Our total complexity becomes 
$\tilde\cO(\kappa  \sqrt{d})$ 
for both total variance and Wasserstein distance, which is better than 
$\tilde \cO(\kappa d )$ 
in \citet[Corollary 7]{chen2022improved} also for the proximal sampler. Considering other sampling methods, our result $\tilde \cO(\kappa \sqrt{d})$ surpasses most of the existing bounds, including randomized midpoint Unadjusted Langevin Monte Carlo (LMC)~\citep{he2020ergodicity}, ULMC~\citep{cheng2018underdamped,dalalyan2020sampling,ganesh2020faster}, MALA with a warm start~\citep{chewi2021optimal,wu2021minimax}. 
In particular, \citet{wu2021minimax} shows $\tilde \Omega(\kappa\sqrt{d})$ is the lower bound for MALA to mix.
\citet{shen2019randomized} can achieve 
$\tilde \cO  \left(
    \kappa^{7/6} ( \frac{2}{\delta} \sqrt{\frac{d}{\beta} } )^{1/3}
    +
    \kappa ( \frac{2}{\delta } \sqrt{\frac{d}{\beta} } )^{2/3}
    \right) $ in terms of Wasserstein distance. Their bound is better in dimension dependence but depends 
    polynomially on $\delta$, and is therefore not a high-accuracy guarantee.

\subsection{Convex and semi-smooth potential}
\begin{proposition}\label{prop:convex}
Suppose $f$ is convex and $L_\alpha$-$\alpha$-semi-smooth. Let $ \delta \in (0,1), \eta =
\tilde \cO \left({1}/{ (L_\alpha^{\frac{2}{\alpha+1}}d^{\frac{\alpha}{\alpha+1}} ) } \right)$. 
Then we
can find a random point $x_T$ that has $\delta$ total variation distance to $\nu 
\propto \exp(-f(x))
$ in $$
T= \cO \left( \frac{W_2^2(\mu_0, \nu) L_\alpha^{ \frac{2}{\alpha+1} } d^{\frac{\alpha}{\alpha+1}}
}{\delta^2 } 
\log \left( \frac{W_2^2(\mu_0, \nu) L_\alpha^{ \frac{2}{\alpha+1} } d^{\frac{\alpha}{\alpha+1}}}{\delta^3 }\right)
\right)
$$ steps.
Furthermore,  each step accesses only $\cO(1)$ many $f(x)$ queries in expectation.
\end{proposition}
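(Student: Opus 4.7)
The plan is to combine the log-concave convergence rate for the exact proximal sampler (Theorem \ref{thm:proximal_convergence}, part 1) with the RGO accuracy guarantee of Theorem \ref{thm:rgo} via the triangle inequality in total variation, using Lemma \ref{lem:tv_telecope} to telescope the per-step RGO error. Specifically, I will split the target tolerance in half: demand that the exact proximal iterate $\mu_T$ be within $\delta/2$ of $\nu$ in total variation, and that the actual (inexact) iterate $\hat\mu_T$ be within $\delta/2$ of $\mu_T$. Pinsker's inequality together with Theorem \ref{thm:proximal_convergence}(1) reduces the first requirement to $H_\nu(\mu_T) \le \delta^2/2$, which in turn is implied by $T\eta \ge 4 W_2^2(\mu_0,\nu)/\delta^2$. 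For the second requirement, Lemma \ref{lem:tv_telecope} converts the per-step RGO tolerance $\zeta$ into the accumulated error bound $\zeta T$, so it suffices to take $\zeta = \delta/(2T)$.

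Next I would plug these demands into Theorem \ref{thm:rgo}, which guarantees per-step TV error at most $\zeta$ whenever
\[
\eta \;\le\; \bigl(\Consteta\, L_\alpha^{2/(\alpha+1)} d^{\alpha/(\alpha+1)}\bigl(1+\log(1+12/\zeta)\bigr)\bigr)^{-1}.
\]
Substituting $\zeta = \delta/(2T)$, the admissible step size becomes
\[
\eta \;=\; \tilde\cO\!\left(\frac{1}{L_\alpha^{2/(\alpha+1)} d^{\alpha/(\alpha+1)}\log(T/\delta)}\right),
\]
and combining with the requirement $T\eta \ge 4W_2^2(\mu_0,\nu)/\delta^2$ gives an implicit bound of the form
\[
T \;\ge\; c\,\frac{W_2^2(\mu_0,\nu)\, L_\alpha^{2/(\alpha+1)} d^{\alpha/(\alpha+1)}}{\delta^2}\,\log(T/\delta).
\]

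The only genuinely delicate step is unwinding this implicit inequality in $T$ to recover the explicit closed-form bound stated in the proposition. I would handle it by the standard bootstrapping trick: first observe that $T$ is certainly at most polynomial in the relevant parameters, use this crude bound to replace $\log T$ by $\log\bigl(W_2^2(\mu_0,\nu) L_\alpha^{2/(\alpha+1)} d^{\alpha/(\alpha+1)}/\delta^3\bigr)$ up to constants, and then verify that the resulting explicit choice of $T$ indeed satisfies the implicit inequality. This yields the claimed iteration complexity.

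Finally, the per-step query complexity is immediate: since $\zeta = \delta/(2T) < 1$ for the regime of interest, the last sentence of Theorem \ref{thm:rgo} gives that each call to Algorithm \ref{alg:rgo} uses $\cO(1)$ evaluations of $f$ in expectation. The main conceptual obstacle is, as noted, ensuring that the logarithmic dependence on $T/\delta$ introduced by the RGO tolerance does not blow up the bound; but because it enters only through $\log(1+12/\zeta)$, the bootstrapping above absorbs it cleanly into the $\tilde\cO$ notation.
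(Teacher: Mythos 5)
Your proof plan matches the paper's argument step for step: Theorem \ref{thm:rgo} with $\zeta=\delta/(2T)$ to control the per-step RGO error, Lemma \ref{lem:tv_telecope} to accumulate it to $\delta/2$, Pinsker's inequality plus Theorem \ref{thm:proximal_convergence}(1) to make the ideal iterate $\mu_T$ within $\delta/2$ of $\nu$, the triangle inequality to combine, and then plugging the $\eta$ bound into the $T\eta$ requirement. The only difference is a constant in Pinsker's inequality (the paper uses $\|\mu_T-\nu\|_{\TV}\le\sqrt{2H_\nu(\mu_T)}$, giving $\delta^2/8$ rather than your $\delta^2/2$), and you are somewhat more explicit than the paper about the bootstrap needed to unwind the $\log T$ dependence inside the admissible $\eta$; neither of these affects correctness or the final $\tilde\cO$ bound.
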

Assume $W_2^2(\mu_0, \nu) = \cO(\cM_2)$ where $\cM_2$ is the second moment of $\nu$. Then our bound becomes $\tilde \cO(\cM_2 L_\alpha^{ \frac{2}{\alpha+1} } d^{\frac{\alpha}{\alpha+1}} / \delta^2 )$. When $f$ is smooth and $\alpha=1$, our result $\tilde \cO(\cM_2 L_1 \sqrt{d} /\delta^2 )$ improves the bound $\tilde \cO(\cM_2 L_1 d / \delta^2 )$ in \citet[Corollary 6]{chen2022improved}, which is the state-of-art (in dimension) complexity  for log-concave smooth sampling. 
When $f$ is Lipschitz, our result $\tilde \cO(\cM_2 L_0^2 /\delta^2 )$  improves the bound $\tilde \cO(\cM_2 L_0^2 d /\delta^4 )$ in \citet[Theorem 1]{lehec2021langevin}.
When $f$ is semi-smooth, we also improve the bound $\tilde \cO( \sqrt{\cM_4} L_\alpha^{\frac{2}{\alpha+1}} d /\delta )$ in \citet[Theorem 4.2]{liang2022proximal} in terms of dimension. Here $\cM_4$ is the fourth central moment of $\nu$.
\begin{remark}
An alternative approach to sample from a log-concave distribution is to first construct a regularized strongly convex potential $\hat f(x):= f(x) + w\| x - x^*\|^2/2$ with $x^*$ being an (approximated) minimizer of $f$. Following \citet{liang2021proximal}, Algorithm \ref{alg:rgo} can be modified so that the proximal sampler can sample from $\exp(-\hat f)$ with complexity $\tilde \cO(L_\alpha^{ \frac{2}{\alpha+1} } d^{\frac{\alpha}{\alpha+1}}/w)$. With a proper choice of $w$, we arrive at an algorithm to sample from $\nu\propto \exp(-f)$ with complexity $\tilde \cO(\sqrt{\cM_4}L_\alpha^{ \frac{2}{\alpha+1} } d^{\frac{\alpha}{\alpha+1}}/\delta)$.
\end{remark}

\subsection{Sampling from non-log-concave distributions \fan{satisfying isoperimetric inequalities} }
Since \eqref{eq:lsi} implies that the distribution has a sub-Gaussian tail, we only present the result for smooth potentials ($\alpha=1$) when LSI is satisfied.

\begin{proposition}\label{prop:lsi}
Suppose $\nu \propto \exp(-f)$ satisfies $C_\LSI$-\ref{eq:lsi} and $f$ is $L_1$-smooth. Let $ \delta \in (0,1), \eta =
\tilde \cO \left( {1}/{ (L_1 \sqrt{d}) }  \right). $ 
Then we
can find a random point $x_T$ that has $\delta$ total variation distance to $\nu$
in $$
T =   \cO  \left( \frac{ L_1 \sqrt{d}}{ C_\LSI }  \log \left( \frac{L_1 \sqrt{d} }{ C_\LSI \delta} \right)  \log \left( \frac{ \sqrt{H_\nu (\mu_0)}}{ \delta} \right) \right)
$$ steps.
Furthermore,  each step accesses only $\cO(1)$ many $f(x)$ queries in expectation.
\end{proposition}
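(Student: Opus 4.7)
The plan is to combine the exponential KL contraction of the exact proximal sampler under \eqref{eq:lsi} (Theorem \ref{thm:proximal_convergence}(2)) with the per-step TV guarantee of our inexact RGO (Theorem \ref{thm:rgo} specialized to $\alpha=1$), and to control the accumulated bias through Lemma \ref{lem:tv_telecope}. Let $\mu_T$ and $\hat\mu_T$ denote the laws of $x_T$ produced by the exact proximal sampler and by Algorithm \ref{algo:AlternatingSampler} with Algorithm \ref{alg:rgo} as the RGO step, respectively, both initialized at $\mu_0$. By the triangle inequality,
\begin{align}
\|\hat\mu_T - \nu\|_\TV \le \|\hat\mu_T - \mu_T\|_\TV + \|\mu_T - \nu\|_\TV,
\end{align}
so it suffices to bound each term by $\delta/2$.

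For the idealized bias, Theorem \ref{thm:proximal_convergence}(2) combined with Pinsker's inequality gives $\|\mu_T-\nu\|_\TV \le \sqrt{H_\nu(\mu_0)/2}\,(1+C_\LSI\eta)^{-T}$. Using $\log(1+C_\LSI\eta)\gtrsim C_\LSI\eta$ for small $\eta$, forcing this below $\delta/2$ requires
\begin{align}
T = \Theta\!\left(\frac{1}{C_\LSI\eta}\log\frac{\sqrt{H_\nu(\mu_0)}}{\delta}\right).
\end{align}
For the stochastic bias from the inexact RGO, I pick the per-step TV target $\zeta := \delta/(2T)$, so that Lemma \ref{lem:tv_telecope} yields $\|\hat\mu_T - \mu_T\|_\TV \le \zeta T = \delta/2$. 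Specializing Theorem \ref{thm:rgo} to $\alpha=1$, this accuracy is achieved with
\begin{align}
\eta = \Theta\!\left(\frac{1}{L_1\sqrt{d}\,(1+\log(1+24T/\delta))}\right),
\end{align}
while the rejection-sampling inner loop still terminates in $\cO(1)$ expected queries to $f$.

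Substituting this $\eta$ into the contraction requirement produces the implicit inequality
\begin{align}
T \gtrsim \frac{L_1\sqrt{d}}{C_\LSI}\,\log(T/\delta)\,\log\frac{\sqrt{H_\nu(\mu_0)}}{\delta},
\end{align}
and the main technical step is to unwind this self-referential bound. Since $T$ is already bounded by a polynomial in $L_1,d,1/C_\LSI,1/\delta$ by any loose preliminary estimate, $\log(T/\delta)$ can be replaced by $\log(L_1\sqrt{d}/(C_\LSI\delta))$ up to an absolute constant, yielding the advertised
\begin{align}
T = \cO\!\left(\frac{L_1\sqrt{d}}{C_\LSI}\log\frac{L_1\sqrt{d}}{C_\LSI\delta}\,\log\frac{\sqrt{H_\nu(\mu_0)}}{\delta}\right).
\end{align}
The only real obstacle is the circular dependence of $\eta$ on $T$ through the $\log(T/\delta)$ factor from Theorem \ref{thm:rgo}; once that is resolved by the standard fixed-point argument above, the rest is routine substitution, and the $\cO(1)$ per-step query complexity is inherited directly from Theorem \ref{thm:rgo}.
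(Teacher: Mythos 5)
Your proposal is correct and follows essentially the same route as the paper's proof: triangle inequality to split the error, Theorem \ref{thm:proximal_convergence}(2) together with Pinsker for the idealized bias, Lemma \ref{lem:tv_telecope} with per-step TV tolerance $\zeta=\delta/(2T)$ from Theorem \ref{thm:rgo} for the inexactness, and finally substituting $\eta$ into $T=\Theta(1/(C_\LSI\eta)\log(\sqrt{H_\nu(\mu_0)}/\delta))$. The only difference is that you make the circular dependence between $\eta$ and $T$ explicit and resolve it by a fixed-point argument, whereas the paper silently ``plugs in the value of $\eta$''; this is a small presentational improvement, not a different method.
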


We can also define a ``condition number'' $\hat \kappa = L_1 / C_\LSI $, and assume $H_\nu( \mu_0) = \cO(d)$. Then our result becomes $\tilde\cO  \left( { \hat \kappa \sqrt{d}}  
\right)$, whereas \citet[Corollary 7]{chen2022improved}
and \citet[Theorem 3.1]{Liang2022APA} give $\tilde \cO(\hat \kappa d 
)$. 
Our bound is also better than the order $\tilde \cO( \hat \kappa^2 d /\delta^2 )$ in \citet[Theorem 7]{chewi2021analysis,erdogdu2021convergence}.

\begin{proposition}\label{prop:pi}
Suppose $\nu  \propto \exp(-f)$ satisfies $C_\PI$-\ref{eq:pi} and $f$ is $L_\alpha$-$\alpha$-semi-smooth.
Let $ \delta \in (0,1), \eta =
\tilde \cO \left({1}/{ (L_\alpha^{\frac{2}{\alpha+1}}d^{\frac{\alpha}{\alpha+1}} ) } \right)$. 
Then we
can find a random point $x_T$ that has $\delta$ total variation distance to $\nu 
$ in $$
T= \cO \left( 
 \frac{L_\alpha^{\frac{2}{\alpha+1}}d^{\frac{\alpha}{\alpha+1}} }{C_\PI}\log \left(\frac{L_\alpha^{\frac{2}{\alpha+1}} d^{\frac{\alpha}{\alpha+1}} 
}{C_\PI \delta } \right) \log \left( \frac{\chi_\nu^2(\mu_0)}{\delta^2} \right) \right)
$$ steps.
Furthermore,  each step accesses only $\cO(1)$ many $f(x)$ queries in expectation.
\end{proposition}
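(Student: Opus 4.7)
The strategy is to combine Theorem \ref{thm:proximal_convergence}(3) (exact proximal sampler contracts in $\chi^2$ under PI) with Theorem \ref{thm:rgo} (per-step TV guarantee of the inexact RGO) and Lemma \ref{lem:tv_telecope} (telescoping of RGO errors), and to glue them via the triangle inequality
\[
\|\hat\mu_T - \nu\|_\TV \;\le\; \|\hat\mu_T - \mu_T\|_\TV + \|\mu_T - \nu\|_\TV,
\]
allocating $\delta/2$ to each summand. This reduces the task to: (a) choosing $T$ large enough that the \emph{exact} iterate obeys $\|\mu_T - \nu\|_\TV \le \delta/2$, and (b) choosing the RGO accuracy $\zeta$ small enough that $\zeta T \le \delta/2$, in the same manner as Propositions \ref{prop:strongly_convex} and \ref{prop:lsi}.

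For step (a), Theorem \ref{thm:proximal_convergence}(3) yields $\chi^2_\nu(\mu_T) \le \chi^2_\nu(\mu_0)/(1+C_\PI \eta)^{2T}$. Invoking the Cauchy--Schwarz bound $\|\mu - \nu\|_\TV \le \tfrac12\sqrt{\chi^2_\nu(\mu)}$, the target $\|\mu_T - \nu\|_\TV \le \delta/2$ holds as soon as $\chi^2_\nu(\mu_T) \le \delta^2$, i.e.\ whenever
\[
T \;\ge\; \frac{\log\!\left(\chi^2_\nu(\mu_0)/\delta^2\right)}{2\log(1+C_\PI\eta)} \;=\; \cO\!\left(\frac{1}{C_\PI \eta}\log\frac{\chi^2_\nu(\mu_0)}{\delta^2}\right),
\]
where the last asymptotic uses $\log(1+x)\sim x$, valid for the small $\eta\ll 1/C_\PI$ that we will select. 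For step (b), I would set $\zeta = \delta/(2T)$, so Lemma \ref{lem:tv_telecope} immediately gives $\|\hat\mu_T - \mu_T\|_\TV \le \zeta T = \delta/2$.

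Finally, Theorem \ref{thm:rgo} realizes this $\zeta$-accurate RGO with $\cO(1)$ expected $f$-queries per call provided
\[
\eta \;\le\; \Bigl(\Consteta\, L_\alpha^{\frac{2}{\alpha+1}} d^{\frac{\alpha}{\alpha+1}}\bigl(1+\log(1+24T/\delta)\bigr)\Bigr)^{-1},
\]
which is exactly the $\tilde\cO\!\bigl(1/(L_\alpha^{2/(\alpha+1)} d^{\alpha/(\alpha+1)})\bigr)$ specification in the statement. Substituting this $\eta$ into the bound from (a) produces $T = \cO\!\bigl(\tfrac{L_\alpha^{2/(\alpha+1)} d^{\alpha/(\alpha+1)}}{C_\PI}\log(\cdot)\,\log(\chi^2_\nu(\mu_0)/\delta^2)\bigr)$, as claimed. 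The only delicate point I anticipate is the implicit coupling $T \leftrightarrow \eta \leftrightarrow \log(T/\delta)$ created by the $\log(1+24T/\delta)$ factor in the step-size constraint; however, this dependence is iterated logarithmic, so a standard self-consistency argument (identical to those used in Propositions \ref{prop:strongly_convex} and \ref{prop:lsi}) absorbs it into the $\tilde\cO$ notation and completes the plan. No new concentration or coupling tool beyond what has already been established in Sections \ref{sec:proximal}--\ref{sec:exact_rgo} is required.
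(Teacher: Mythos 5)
Your proposal is correct and follows essentially the same route as the paper's proof: the triangle inequality splits $\delta$ between the accumulated inexact-RGO error (Theorem \ref{thm:rgo} with $\zeta=\delta/(2T)$ fed into Lemma \ref{lem:tv_telecope}) and the exact-sampler error from Theorem \ref{thm:proximal_convergence}(3), after which plugging $\eta=\tilde\cO\bigl(1/(L_\alpha^{2/(\alpha+1)}d^{\alpha/(\alpha+1)})\bigr)$ into the $\chi^2$ contraction and using $\log(1+C_\PI\eta)=\Theta(C_\PI\eta)$ gives the stated $T$. The only immaterial difference is the $\chi^2$-to-TV conversion: you use the direct Cauchy--Schwarz bound $\|\mu_T-\nu\|_\TV\lesssim\sqrt{\chi^2_\nu(\mu_T)}$, whereas the paper goes through Pinsker combined with the KL--$\chi^2$ relation (requiring $\chi^2_\nu(\mu_T)\le\exp(\delta^2/8)-1$); both yield the same $\log\bigl(\chi^2_\nu(\mu_0)/\delta^2\bigr)$ factor and hence the same complexity.
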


By 
Lemma \ref{lem:comparison} and \citet[\S A]{chewi2021analysis}, we assume $ \chi_\nu^2(\mu_0) = \cO(\exp( d))$. 
Then our result becomes $\tilde\cO \left( 
 \frac{L_\alpha^{\frac{2}{\alpha+1}}d^{\frac{2\alpha+1}{\alpha+1}} }{C_\PI}
\right)$. This improves the result $\tilde \cO \left( \frac{L_\alpha^{\frac{2}{\alpha+1}}d^2 }{C_\PI} 
\right)$ in \citet{Liang2022APA} and $\tilde \cO \left( \frac{L_\alpha^{{2}/{\alpha}}d^{2 + 1/ \alpha} }{C_\PI^{1+1/\alpha}  \delta^{2/ \alpha} }  \right) $ in \citet{chewi2021analysis}.

\section{Proximal sampling for composite potentials}\label{sec:composite}

In this section, we consider the composite potential $f= \sum_{j=1}^n f_j $ that satisfies 
\begin{align}\label{eq:composite}
\|f_j'(u) - f_j'(v)  \| \le 
L_{\alpha_j} \| u -v\|^{\alpha_j} , ~~ \forall~ u , v \in \mR^d  ~~, \forall~ 1\le j \le n  
\end{align}
with $\alpha_j \in [0, 1]$ for all $j$.
When $n=1$, this can recover the assumption \eqref{eq:semi-smooth}.
When $n=2$, it can also recover the popular ``smooth+non-smooth'' function assumption.
In this section we extend the analysis in the previous sections for semi-smooth $f$ to composite $f$.

\subsection{RGO complexity}

Again, to simplify the argument, we assume that the stationary point of 
$f_y^\eta$ can be computed exactly. Otherwise,
we can adopt the same argument in \S \ref{sec:approximate_opt} to obtain the same complexity order.

Since $f$ is composite, we naturally let $g= \sum_{j=1}^n g_j $, where $g_j(x):= f_j(x) - \< f_j'(x_y) , x \> $. 
Clearly, $x_y$ is the stationary point of all $g_j$, namely, $g_j'(x_y)=0$.
It is easy to check that Lemma \ref{lem:equi_rgo} and \ref{lem:dist_relation} still hold for the composite potential. Thus, the RGO step aims to sample from $\exp(-g(x)-\nicefrac{1}{2\eta}\|x-y\|^2)$ for a fixed $y$. Next, for the complexity analysis,
the crux is also to develop a concentration inequality for composite function 
$g$. The proof is naturally based on the probabilistic uniform bound.

\begin{corollary}[Gaussian concentration inequality for composite functions] \label{cor:composite_concentration}
Let $X \sim \cN(m,\eta\ide)$ be a Gaussian random variable in $\Rd$, and let $\ell$ be a composite function satisfying \eqref{eq:composite}. Assume $ \ell'_j (m) = 0$ ~for~ $\forall ~1\le j \le n$. Then for any $r>0, ~0 \le {\alpha_j} \le 1$ and $\sum_{j=1}^n w_j =1, w_j\ge 0$, one has 
\begin{align} \label{eq:composite_conc}
\Pr( \ell(X)-\mE(\ell(X))\geq r) 
\leq \left(1-\frac{\epsilon}{d} \right)^{-\frac{d}{2}}
\sum_{i=j}^n 
 \exp\left(-\frac{C_j \epsilon^{\frac{{\alpha_j}}{1+{\alpha_j}}}(w_j r)^{\frac{2}{1+{\alpha_j}}}}{L_{\alpha_j}^{\frac{2}{1+{\alpha_j}}}d^{\frac{{\alpha_j}}{1+{\alpha_j}}}\eta}\right), ~~ \forall \epsilon \in (0,d), 
\end{align}
where
$C_j = \Ciconst$.
\end{corollary}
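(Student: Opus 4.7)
The plan is to reduce the composite case to the semi-smooth case (Theorem~\ref{thm:concentration}) via a weighted union bound. First I would write the deviation as
\begin{equation*}
\ell(X)-\mE[\ell(X)] \;=\; \sum_{j=1}^{n}\bigl(\ell_j(X)-\mE[\ell_j(X)]\bigr),
\end{equation*}
and observe the elementary pigeonhole-style fact: if $\sum_{j=1}^n w_j=1$ with $w_j\geq 0$, then the event $\{\ell(X)-\mE[\ell(X)]\geq r\}$ is contained in $\bigcup_{j=1}^{n}\{\ell_j(X)-\mE[\ell_j(X)]\geq w_j r\}$, since otherwise every summand is strictly below $w_j r$ and the total is strictly below $r$.

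Next, I would apply a union bound to get
\begin{equation*}
\Pr\bigl(\ell(X)-\mE[\ell(X)]\geq r\bigr) \;\leq\; \sum_{j=1}^{n}\Pr\bigl(\ell_j(X)-\mE[\ell_j(X)]\geq w_j r\bigr).
\end{equation*}
Each $\ell_j$ is $L_{\alpha_j}$-$\alpha_j$-semi-smooth and, by assumption, satisfies $\ell_j'(m)=0$, so Theorem~\ref{thm:concentration} applies to each $\ell_j$ individually with threshold $w_j r$. This yields, for every $\epsilon\in(0,d)$,
\begin{equation*}
\Pr\bigl(\ell_j(X)-\mE[\ell_j(X)]\geq w_j r\bigr)\;\leq\; \Bigl(1-\tfrac{\epsilon}{d}\Bigr)^{-d/2}\exp\!\Bigl(-\tfrac{C_j\,\epsilon^{\alpha_j/(1+\alpha_j)}(w_j r)^{2/(1+\alpha_j)}}{L_{\alpha_j}^{2/(1+\alpha_j)}\,d^{\alpha_j/(1+\alpha_j)}\,\eta}\Bigr),
\end{equation*}
with the constants $C_j=\Ciconst$ taken directly from the single-function bound \eqref{eq:conce_C}. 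Summing over $j$ factors out the common prefactor $(1-\epsilon/d)^{-d/2}$ and produces the bound \eqref{eq:composite_conc}.

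The only step requiring any real argument is the weighted pigeonhole observation; the rest is a direct invocation of Theorem~\ref{thm:concentration} together with the linearity of expectation. I do not foresee an obstacle: the hypothesis $\ell_j'(m)=0$ is exactly what is needed to apply the single-component concentration inequality with the stationary-point assumption, and the user-supplied weights $\{w_j\}$ play the same role here as the tunable parameter $\epsilon$ does in Theorem~\ref{thm:concentration}, giving the caller additional freedom to balance the various tails when a specific $r$ is of interest.
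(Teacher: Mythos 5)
Your proof is correct and follows essentially the same route as the paper: the paper's own argument is precisely Theorem~\ref{thm:concentration} applied to each $\ell_j$ at level $w_j r$ combined with a union bound (the ``probabilistic uniform bound''), and your weighted pigeonhole step simply makes explicit the event inclusion that the paper leaves implicit.
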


With this concentration inequality,
in view of the fact that $g_j'(x_y)=0$, we obtain similar RGO complexity results.
Our step size in the following theorems can recover the $n=1$ case in \S \ref{sec:exact_rgo}.

 \begin{theorem}[RGO complexity in total variation]\label{thm:composite_rgo_tv}
Assume $f$ satisfies \eqref{eq:composite}. If the step size 
$$\eta \le 
\left( \Consteta  \left( \sum_{j=1}^n L_{\alpha_j}^{\frac{1}{\alpha_j+1}} d^{ \frac{\alpha_j}{ 2(\alpha_j+1)} } \right)^2(1+ \log (1+ 12 n /\zeta ))
\right)^{-1}
,$$
then for any $\zeta>0$, Algorithm \ref{alg:rgo} returns a random point $x$ that has $\zeta$ total variation distance to the distribution proportional to $
\pi^{X|Y}(\cdot | y)
$. Furthermore, if $~0 < \zeta < 1$, then the algorithm access $\cO(1)$ many $f(x)$ in expectation.
 \end{theorem}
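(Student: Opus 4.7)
The plan is to replicate the argument for Theorem \ref{thm:rgo} with Corollary \ref{cor:composite_concentration} in place of Theorem \ref{thm:concentration}. By the composite analogues of Lemma \ref{lem:equi_rgo} and Lemma \ref{lem:dist_relation}, bounding the total-variation error between $\hat\pi^{X|Y}$ and $\pi^{X|Y}$ reduces to controlling $\mE[\rho\,\one_{\rho\ge 2}]/\mE[\bar\rho]$, where $\rho = \exp(g(z)-g(x))$ with independent $x,z\sim\phi=\cN(x_y,\eta\ide)$. A layer-cake identity gives $\mE[\rho\,\one_{\rho\ge 2}] = 2\Pr(\rho\ge 2)+\int_2^\infty \Pr(\rho\ge t)\,dt$, so it suffices to establish a sufficiently rapid tail bound for $g(z)-g(x)$ under $\phi\otimes\phi$.

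First, since $g_j(x)=f_j(x)-\langle f_j'(x_y),x\rangle$ satisfies $g_j'(x_y)=0$ by construction, Corollary \ref{cor:composite_concentration} applies to $\ell=g$ at $m=x_y$. For any weights $w_j\ge 0$ with $\sum_j w_j=1$, applying the corollary to $g$ and (by symmetry) to $-g$, followed by a union bound over $\{g(z)-\mE g\ge (\log t)/2\}\cup\{\mE g-g(x)\ge (\log t)/2\}$, yields
\begin{equation*}
\Pr(\rho\ge t)\;\le\; 2\Big(1-\tfrac{\epsilon}{d}\Big)^{-d/2}\sum_{j=1}^n \exp\!\left(-\frac{C_j\,\epsilon^{\alpha_j/(1+\alpha_j)}\bigl(w_j \tfrac{\log t}{2}\bigr)^{2/(1+\alpha_j)}}{L_{\alpha_j}^{2/(1+\alpha_j)}\,d^{\alpha_j/(1+\alpha_j)}\,\eta}\right).
\end{equation*}

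Second, to balance the $n$ sub-exponential pieces across the possibly distinct $\alpha_j$'s, I would choose
\begin{equation*}
w_j \;\propto\; L_{\alpha_j}^{1/(\alpha_j+1)}\,d^{\alpha_j/(2(\alpha_j+1))}, \qquad S := \sum_{k=1}^n L_{\alpha_k}^{1/(\alpha_k+1)}\,d^{\alpha_k/(2(\alpha_k+1))},
\end{equation*}
which aligns the denominators $L_{\alpha_j}^{2/(1+\alpha_j)}d^{\alpha_j/(1+\alpha_j)}$ with the numerator factor $w_j^{2/(1+\alpha_j)}$ so that each summand collapses to $\exp(-c_j(\log t)^{2/(1+\alpha_j)}/(\eta S^2))$, up to constants and a prefactor $(1-\epsilon/d)^{-d/2}$ neutralized by taking $\epsilon=\Theta(1)$. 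Plugging this into the tail integral and into $2\Pr(\rho\ge 2)$, requiring the sum to be at most $\zeta$ after dividing by $\mE[\bar\rho]$, forces $\eta S^2\lesssim 1/\log(n/\zeta)$; propagating the precise constants from Theorem \ref{thm:concentration} recovers the advertised bound $\eta\le (\Consteta\, S^2(1+\log(1+12n/\zeta)))^{-1}$. The $\cO(1)$ expected query count then follows because the same tail bound, applied at $t=1/2$ to $\Pr(\rho\le 1/2)$, forces $\mE[\bar\rho]\ge 1/2$ once $\zeta<1$.

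The main obstacle is the bookkeeping in the second step: because $g(z)-g(x)$ is a mixture of $n$ different sub-exponential/sub-Gaussian tail regimes, one per $\alpha_j$, the tail integral must be bounded term by term, and the non-standard exponents $2/(1+\alpha_j)$ make the tracking of the numerical constant $\Consteta$ tedious. However, the balancing choice of $w_j$ equalizes the sub-exponential scales so that each term is dominated exactly as in the single-regime analysis of Theorem \ref{thm:rgo}; the only new ingredient relative to that proof is the union-bound factor $n$, which enters the bound precisely through the logarithmic term $\log(1+12n/\zeta)$.
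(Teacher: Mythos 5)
Your proposal is correct and follows essentially the same route as the paper's proof: the same reduction via Lemma \ref{lem:dist_relation} to controlling $\mE[\rho\,\one_{\rho\ge 2}]$, the same application of Corollary \ref{cor:composite_concentration} (to $g$ and $-g$ with a union bound over the $n$ components, which is exactly where the $\log(1+12n/\zeta)$ arises), and the identical balancing choice $w_j\propto L_{\alpha_j}^{1/(\alpha_j+1)}d^{\alpha_j/(2(\alpha_j+1))}$; the paper merely discretizes the tail of $\rho$ into a geometric series via $\bar\Delta=\Delta/\log 2$ instead of your layer-cake integral, which is equivalent bookkeeping. The only cosmetic deviation is the final query-count step, where the paper lower-bounds the acceptance probability through $\mE V\ge 1$ (Cauchy--Schwarz) combined with $\mE|V-\overline V|\le\zeta/2$ rather than a separate tail bound at $t=1/2$ (your route gives a slightly weaker constant than $1/2$ but still yields the $\cO(1)$ expected number of queries).
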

 \begin{theorem}[RGO complexity in Wasserstein distance]\label{thm:composite_rgo_w2}
Assume $f$ satisfies \eqref{eq:composite}. If the step size 
\begin{align}
\eta \le \min \left(
\left( \Consteta  \left( \sum_{j=1}^n L_{\alpha_j}^{\frac{1}{\alpha_j+1}} d^{ \frac{\alpha_j}{ 2(\alpha_j+1)} } \right)^2 (2+ \log( 1+ {192 n  (d^2 + 2d)}/{ \zeta^4} )) \right)^{-1} , ~~~
1\right)  ,  
\end{align}
then Algorithm \ref{alg:rgo} returns a random point $x$ that has $\zeta$ Wasserstein-2 distance to the distribution $\pi^{X|Y}(\cdot | y)$. If 
 ~$0< \zeta < 2\sqrt{2d} $, then the algorithm access $\cO(1)$ many $f(x)$ in expectation.
 \end{theorem}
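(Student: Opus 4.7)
The plan is to mirror the Wasserstein proof of Theorem \ref{thm:rgo_w2} for the single semi-smooth potential, replacing the scalar concentration inequality by its composite analogue, Corollary \ref{cor:composite_concentration}. First, I would check that Lemma \ref{lem:equi_rgo} and Lemma \ref{lem:dist_relation} carry over verbatim to the composite case: decomposing $g = \sum_{j=1}^n g_j$ with $g_j(x) := f_j(x) - \langle f_j'(x_y), x\rangle$, one has $g'(x_y) = \sum_j g_j'(x_y) = 0$, which is all that the proofs of those lemmas require. Thus Algorithm \ref{alg:rgo} samples from $\hat\pi^{X|Y}$ with density proportional to $\phi \cdot \overline V$, where $\phi = \mathcal{N}(x_y, \eta\ide)$, and this differs from $\pi^{X|Y} \propto \phi \cdot V$ only on the event $\{\rho > 2\}$.

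Second, I would construct a maximal coupling between $X\sim\pi^{X|Y}$ and $\hat X \sim \hat\pi^{X|Y}$ for which $\Pr(X\ne \hat X) = \|\pi^{X|Y} - \hat\pi^{X|Y}\|_{\TV}$, and apply Cauchy--Schwarz:
\begin{align}
W_2^2(\hat\pi^{X|Y}, \pi^{X|Y})
\le \mE\bigl[\|\hat X - X\|^2 \one_{\hat X \ne X}\bigr]
\le \sqrt{\mE\|\hat X - X\|^4 \,\Pr(\hat X \ne X)}.
\end{align}
A routine Gaussian computation, using that both $X$ and $\hat X$ come from densities that are $\phi$ tilted by exponentially concentrated factors near $x_y$, gives $\mE\|\hat X - X\|^4 \lesssim \eta^2 (d^2 + 2d)$, which accounts for the $d^2 + 2d$ factor in the hypothesis on $\eta$.

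Third, I would bound $\Pr(\hat X \ne X)$ by $\Pr(\rho \ge 2)$ via the Lemma \ref{lem:dist_relation} identities. Writing $\rho \ge 2$ as $g(z) - g(x) \ge \log 2$ with $x, z$ independent samples from $\phi$, a union bound yields
\begin{align}
\Pr(\rho \ge 2) \le \Pr\bigl(g(z) - \mE g \ge \tfrac{\log 2}{2}\bigr) + \Pr\bigl(\mE g - g(x) \ge \tfrac{\log 2}{2}\bigr).
\end{align}
Each tail falls under Corollary \ref{cor:composite_concentration}, the lower tail being handled by applying the corollary to $-g_j$, which has the same semi-smoothness constants $L_{\alpha_j}$. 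The critical design choice here is the weight vector $(w_j)$; I would pick $w_j \propto L_{\alpha_j}^{1/(\alpha_j+1)} d^{\alpha_j/(2(\alpha_j+1))}$, normalized so $\sum_j w_j = 1$, which forces all $n$ exponentials in \eqref{eq:composite_conc} to share a common rate proportional to $r^2 / [\eta\, (\sum_j L_{\alpha_j}^{1/(\alpha_j+1)} d^{\alpha_j/(2(\alpha_j+1))})^2]$; this is precisely the effective complexity appearing in the statement.

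The main obstacle will be cleanly combining these pieces to invert the exponential and recover the precise $\eta$ threshold. Requiring the right-hand side of the $W_2^2$ bound to be at most $\zeta^2$ forces $\Pr(\rho \ge 2) \lesssim \zeta^4 / [\eta^2 (d^2 + 2d)]$; after fixing $\epsilon$ as a small fraction of $d$ so the polynomial prefactor $(1-\epsilon/d)^{-d/2}$ is absorbed into a constant, inverting the logarithm produces the $\log(1 + 192 n (d^2+2d)/\zeta^4)$ term, with the factor $n$ coming from the union bound over the $n$ exponentials in \eqref{eq:composite_conc}. Finally, the $\cO(1)$ expected query count follows since Lemma \ref{lem:dist_relation} identifies the acceptance probability with $\tfrac12\mE[\overline V]$, and when $\zeta < 2\sqrt{2d}$ the chosen $\eta$ keeps $\rho$ concentrated near $1$, so $\mE[\overline V]$ is bounded below by a constant and the Repeat loop terminates in constant expected iterations.
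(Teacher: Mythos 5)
Your overall strategy (reduce the $W_2$ error to a TV-type quantity weighted by squared distance, then invoke Corollary \ref{cor:composite_concentration} with the weights $w_j \propto L_{\alpha_j}^{1/(\alpha_j+1)} d^{\alpha_j/(2(\alpha_j+1))}$) is the right one and matches the paper's, including the weight choice that equalizes the $n$ exponential rates. But there are two genuine gaps in the middle of your argument. First, you bound the coupling mismatch probability, i.e.\ $\|\pi^{X|Y}-\hat\pi^{X|Y}\|_\TV$, by $\Pr(\rho\ge 2)$. This is not valid: by Lemma \ref{lem:dist_relation} the TV gap is controlled by $2\,\mE|V-\overline V| = 2\,\mE[(\rho-2)\one_{\rho\ge 2}] \le 2\,\mE[\rho\,\one_{\rho\ge 2}]$, and since $\rho=\exp(g(z)-g(x))$ is unbounded, a single tail probability does not control this expectation. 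The paper handles it with the layer-cake series $\sum_{i\ge1}\exp((i+1)\log 2)\Pr(\bar\Delta\ge i)$ (and, for the Cauchy--Schwarz term, $\mE[\rho^2\one_{\rho\ge2}]\le\sum_{i\ge1}\exp(2(i+1)\log 2)\Pr(\bar\Delta\ge i)$, which is where the ``$2+$'' in the step-size condition comes from), each term bounded by the composite concentration inequality and summed as a geometric series. Your sketch never produces this second-moment control of $\rho$ on the tail, so inverting the exponential to get the stated $\log(1+192n(d^2+2d)/\zeta^4)$ threshold does not actually follow from what you wrote.

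Second, the claim that a ``routine Gaussian computation'' gives $\mE\|\hat X - X\|^4 \lesssim \eta^2(d^2+2d)$ under the maximal coupling is not routine for the $\pi^{X|Y}$ marginal: its density relative to $\phi$ is proportional to $V \propto e^{-g(x)}$, which is unbounded, so bounding its fourth moment about $x_y$ by the Gaussian one requires an extra concentration argument. The paper avoids both issues by using Lemma \ref{lem:tv_w2} to write $W_2^2(\pi^{X|Y},\hat\pi^{X|Y}) \le 2\,\mE_\phi\bigl[\|x-x_y\|^2\,\bigl|\tfrac{V}{\mE V}-\tfrac{\overline V}{\mE \overline V}\bigr|\bigr]$ and splitting into one term handled by Cauchy--Schwarz with $\mE_\phi\|x-x_y\|^4 \le \eta^2(d^2+2d)$ and $\mE|V-\overline V|^2$, and a second term in which only the bounded weight $\overline V\le 2$ multiplies $\|x-x_y\|^2$; all moments are then taken under the explicit Gaussian $\phi$, never under the tilted law. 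If you replace your step three by the series bounds on $\mE[\rho\one_{\rho\ge2}]$ and $\mE[\rho^2\one_{\rho\ge2}]$ and restructure the $W_2$ bound as above (or otherwise justify the fourth moment of the tilted law), your argument closes; as written it does not.
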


\subsection{Complexity bounds of proximal sampling}

Next we provide the total complexity under several conditions for composite potential. 
As in \S \ref{sec:convergence}, the results in this section are obtained by plugging the RGO step size (Theorem \ref{thm:composite_rgo_tv}, \ref{thm:composite_rgo_w2}) into proximal sampler convergence results in Theorem \ref{thm:proximal_convergence}.
Due to the high similarity to \S \ref{sec:convergence}, we omit the proofs for the results in this section. Throughout this section, we denote $M_{L,d} :=  \left( \sum_{j=1}^n L_{\alpha_j}^{\frac{1}{\alpha_j+1}} d^{ \frac{\alpha_j}{ 2(\alpha_j+1)} } \right)^2 $ and assume that $\eta 
= \tilde \cO (1/M_{L, d}) $.

\begin{proposition}
Suppose $f$ is $\beta$-strongly convex and satisfies \eqref{eq:composite} and $ \delta \in (0,1). 
$
Then Algorithm \ref{algo:AlternatingSampler},  with Algorithm \ref{alg:rgo} as RGO step, can find a random point $x_T$ that has $\delta$ total variation distance to the distribution $\nu \propto \exp(-f(x)) $ in $$T = \cO  \left( \frac{ M_{L,d } }{ \beta }  \log \left( \frac{M_{L,d } }{ \beta \delta } \right)  \log \left( \frac{ \sqrt{H_\nu (\mu_0)}}{ \delta} \right) \right) $$ steps. And we can find $x_T$ that has $\delta$ Wasserstein-2 distance to the distribution $\nu$ in $$
T= \cO \left( \frac{M_{L,d } }{\beta} \log\left( \frac{M_{L,d } \sqrt{d} }{\beta \delta} \right) {\log \left( \frac{1}{ \delta } \sqrt{ \frac{ H_\nu (\mu_0)}{ \beta} } \right)} \right) 
$$ steps.
Furthermore,  each step accesses only $\cO(1)$ many $f(x)$ queries in expectation.
\end{proposition}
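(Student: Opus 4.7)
The plan is to follow the template of Proposition~\ref{prop:strongly_convex}, replacing the semi-smooth RGO bounds (Theorems~\ref{thm:rgo} and~\ref{thm:rgo_w2}) with their composite analogues (Theorems~\ref{thm:composite_rgo_tv} and~\ref{thm:composite_rgo_w2}). Since $f$ is $\beta$-strongly convex, the Bakry--\'Emery criterion gives that $\nu \propto \exp(-f)$ satisfies $\beta$-\eqref{eq:lsi}, which in particular implies Talagrand's inequality with the same constant. Hence Theorem~\ref{thm:proximal_convergence}(2) yields, for the ideal iterates, $H_\nu(\mu_T) \le H_\nu(\mu_0)(1+\beta\eta)^{-2T}$.

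For the total variation bound, I would decompose
\begin{align}
\|\hat\mu_T - \nu\|_\TV \le \|\hat\mu_T - \mu_T\|_\TV + \|\mu_T - \nu\|_\TV.
\end{align}
Lemma~\ref{lem:tv_telecope} bounds the first term by $\zeta T$, where $\zeta$ is the per-step RGO accuracy given by Theorem~\ref{thm:composite_rgo_tv}. Pinsker's inequality combined with the KL contraction above bounds the second term by $\sqrt{H_\nu(\mu_0)/2}\,(1+\beta\eta)^{-T}$. Requiring each piece to be at most $\delta/2$ forces $\zeta = \Theta(\delta/T)$ and
\begin{align}
T \ge \frac{1}{2\log(1+\beta\eta)}\log\!\left(\frac{2H_\nu(\mu_0)}{\delta^2}\right).
\end{align}
Choosing $\eta$ at the threshold of Theorem~\ref{thm:composite_rgo_tv} with $\zeta = \delta/(2T)$ gives $\eta = \tilde\cO(1/M_{L,d})$, so $\log(1+\beta\eta)\asymp \beta\eta$, and solving self-consistently produces $T = \cO\!\big(\tfrac{M_{L,d}}{\beta}\log(\tfrac{M_{L,d}}{\beta\delta})\log(\tfrac{\sqrt{H_\nu(\mu_0)}}{\delta})\big)$, matching the claim. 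The $\cO(1)$ per-step oracle cost then follows from the second part of Theorem~\ref{thm:composite_rgo_tv}.

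For the Wasserstein-2 bound I would argue analogously via
\begin{align}
W_2(\hat\mu_T,\nu) \le W_2(\hat\mu_T,\mu_T) + W_2(\mu_T,\nu).
\end{align}
Strong log-concavity in particular implies log-concavity, so Lemma~\ref{lem:w2_telecope} applies and gives $W_2(\hat\mu_T,\mu_T) \le \zeta T$ when $\zeta$ comes from Theorem~\ref{thm:composite_rgo_w2}. For the ideal term, Talagrand's inequality applied to $\mu_T$ gives $W_2(\mu_T,\nu) \le \sqrt{2 H_\nu(\mu_T)/\beta} \le \sqrt{2H_\nu(\mu_0)/\beta}\,(1+\beta\eta)^{-T}$. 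Setting both parts to $\delta/2$ and solving yields the stated iteration count with the extra $\sqrt{d}$-type logarithmic factor inside $\log(M_{L,d}\sqrt{d}/(\beta\delta))$ coming from the $d^2$-dependence of the logarithm in Theorem~\ref{thm:composite_rgo_w2}.

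The only real subtlety, and what I expect to be the main bookkeeping obstacle, is the self-consistent choice of $(\eta, T, \zeta)$: the step size $\eta$ depends on $\log(1/\zeta)$ (and on $d$ in the $W_2$ case), while $\zeta$ depends on $T$, which in turn depends on $\eta$. Because all these couplings are logarithmic, absorbing them into a single $\tilde\cO$ factor requires one round of fixed-point reasoning, exactly as in the proof of Proposition~\ref{prop:strongly_convex}. No new conceptual ingredient is needed beyond substituting the $M_{L,d}$-based RGO bounds for the $L_1\sqrt{d}$ ones.
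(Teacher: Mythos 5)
Your plan is correct and coincides with the paper's intended argument: the paper explicitly omits the proof of this proposition, stating that it follows by plugging the composite RGO step sizes (Theorems~\ref{thm:composite_rgo_tv} and~\ref{thm:composite_rgo_w2}) into Theorem~\ref{thm:proximal_convergence} exactly along the template of Proposition~\ref{prop:strongly_convex}, which is precisely what you do (strong convexity $\Rightarrow$ $\beta$-LSI and Talagrand, Pinsker plus Lemma~\ref{lem:tv_telecope} for TV, Lemma~\ref{lem:w2_telecope} for $W_2$, and the logarithmic fixed-point choice of $(\eta,T,\zeta)$). No gaps; the only differences from the paper's Proposition~\ref{prop:strongly_convex} computation are immaterial constants in the Pinsker/Talagrand steps.
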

We again assume $H_\nu (\mu_0) = \cO(d)$~\citep[\S A]{chewi2021analysis}. 
If $f = f_1 + f_2$, where $f_1$ is strongly convex and $L_1$-smooth, and $f_2$ is $L_0$-Lipschitz, then we have $\tilde \cO((L_0 + L_1^{1/2} d^{1/4})^2 / \beta)$, whereas ~\citet{bernton2018langevin} gives $\tilde \cO( L_0^2 d/ (\beta \delta^4) )$, \citet{salim2020primal} gives $\tilde \cO( (L_0^2 + L_1 d )/ (\beta^2 \delta^2 ) ) $, and
\citet[Theorem 5.5]{liang2022proximal} gives $\tilde \cO(  \max (L_0^2 ,L_1) ~ d /\beta )$.

\begin{proposition}\label{prop:composite}
Suppose we
can find a random point $x_T$ that has $\delta$ total variation distance to $\nu$ in $T$ steps.  Let $ \delta \in (0,1),$  and $\nu \propto \exp(-f)$, where $f$ satisfies \eqref{eq:composite}. \\
1) If $\nu$ is log-concave, then 
$T= \cO \left(  \frac{M_{L,d } W_2^2(\mu_0, \nu) 
}{\delta^2 } 
\log \left( \frac{ M_{L,d } W_2^2(\mu_0, \nu) }{\delta^3 }\right)
\right);$ \\
2)
If $\nu$ satisfies $C_\LSI$-\ref{eq:lsi},
then $
T =   \cO  \left( \frac{  M_{L,d } }{ C_\LSI }  \log \left( \frac{ 
 M_{L,d } }{ C_\LSI \delta} \right)  \log \left( \frac{ \sqrt{H_\nu (\mu_0)}}{ \delta} \right) \right) ;
$  \\
3) If $\nu$ satisfies $C_\PI$-\ref{eq:pi}, then 
$
T =  \cO \left( 
 \frac{ M_{L,d } }{C_\PI}\log \left(\frac{ M_{L,d }
}{C_\PI \delta } \right) \log \left( \frac{\chi_\nu^2(\mu_0)}{\delta^2} \right) \right) ;
$ \\
Furthermore,  each step accesses only $\cO(1)$ many $f(x)$ queries in expectation.
\end{proposition}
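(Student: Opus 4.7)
The plan is to follow the same three-step template used in Section \ref{sec:convergence} for the semi-smooth case, now feeding the composite-potential RGO bounds (Theorems \ref{thm:composite_rgo_tv} and \ref{thm:composite_rgo_w2}) into the proximal-sampler convergence results (Theorem \ref{thm:proximal_convergence}), and controlling the accumulated RGO error via Lemmas \ref{lem:tv_telecope} and \ref{lem:w2_telecope}. Since $f$ is composite of the form $\sum_j f_j$, the only quantity that changes compared with Section \ref{sec:convergence} is the allowable step size, which by Theorem \ref{thm:composite_rgo_tv} takes the form $\eta = \tilde\cO(1/M_{L,d})$ with $M_{L,d} = \bigl(\sum_j L_{\alpha_j}^{1/(\alpha_j+1)} d^{\alpha_j/(2(\alpha_j+1))}\bigr)^2$; the $\log(1+12n/\zeta)$ factor there only affects logarithmic terms.

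For each of the three regimes I would first prescribe the per-step RGO accuracy $\zeta = \delta/(2T)$ so that Lemma \ref{lem:tv_telecope} yields $\|\hat\mu_T-\mu_T\|_\TV \leq \delta/2$, and then I would choose $T$ so that the ideal iterate $\mu_T$ is within $\delta/2$ of $\nu$ in total variation. In the log-concave case, Theorem \ref{thm:proximal_convergence}(1) combined with Pinsker's inequality gives $\|\mu_T-\nu\|_\TV \leq \sqrt{H_\nu(\mu_T)/2} \leq \sqrt{W_2^2(\mu_0,\nu)/(2T\eta)}$, so $T = \Theta(W_2^2(\mu_0,\nu)/(\eta\delta^2))$ suffices; substituting $\eta = \tilde\cO(1/M_{L,d})$ yields the stated bound, with the log factor arising from the $\log(1+12n/\zeta) = \log(1+\Theta(nT/\delta))$ term absorbed via a fixed-point argument (the dependence on $T$ inside $\eta$ is only logarithmic). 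In the LSI case, Theorem \ref{thm:proximal_convergence}(2) together with Pinsker shows $T = \Theta(\log(H_\nu(\mu_0)/\delta^2)/\log(1+C_\LSI\eta))$; since $C_\LSI\eta$ is small, $\log(1+C_\LSI\eta) = \Theta(C_\LSI\eta)$, giving $T = \tilde\cO(M_{L,d}/C_\LSI)$ up to the stated logs. In the Poincaré case, Theorem \ref{thm:proximal_convergence}(3) gives exponential contraction of $\chi^2_\nu(\hat\mu_T)$, and combining with the chain $\|\cdot\|_\TV \leq \tfrac{1}{2}\sqrt{\chi^2_\nu(\cdot)}$ yields the analogous bound $T = \tilde\cO(M_{L,d}/C_\PI \cdot \log(\chi^2_\nu(\mu_0)/\delta^2))$.

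Finally, the $\cO(1)$ expected query cost per step comes directly from Theorems \ref{thm:composite_rgo_tv} and \ref{thm:composite_rgo_w2} once we verify that the prescribed $\zeta = \delta/(2T)$ satisfies $0 < \zeta < 1$, which holds in every regime we consider. The main subtlety I expect is the implicit dependence of $\eta$ on $T$ through the $\log(1+12n/\zeta)$ factor: since $\zeta$ is chosen in terms of $T$ and $T$ in turn depends on $1/\eta$, one must close the loop. This is resolved exactly as in Section \ref{sec:convergence}, because the coupling is purely through a logarithm, so picking $T$ a constant multiple larger than the naive estimate is always sufficient; this absorption is the reason all bounds appear with $\tilde\cO$ notation. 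No new analytic tool is required beyond those already established in Sections \ref{sec:proximal} and \ref{sec:composite}, which is why the authors can state the result without a detailed proof.
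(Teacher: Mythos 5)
Your proposal is correct and follows essentially the same route as the paper, which indeed omits the proof of Proposition \ref{prop:composite} precisely because it is the Section \ref{sec:convergence} template verbatim: set $\zeta=\Theta(\delta/T)$, apply Lemma \ref{lem:tv_telecope} with the composite RGO step size from Theorem \ref{thm:composite_rgo_tv}, and plug into Theorem \ref{thm:proximal_convergence}, closing the logarithmic $\eta$--$T$ loop as in Propositions \ref{prop:convex}--\ref{prop:pi}. The only cosmetic deviations are your use of $\|\cdot\|_\TV\le\tfrac12\sqrt{\chi_\nu^2(\cdot)}$ in the \Pc case (the paper goes through $\sqrt{2\log(1+\chi_\nu^2)}$, same order) and a slip writing $\chi^2_\nu(\hat\mu_T)$ where the contraction of Theorem \ref{thm:proximal_convergence} applies to the ideal iterate $\mu_T$, neither of which affects the argument.
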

Consider log-concave $\nu$ and the potential $f$ is a "smooth + semi-smooth" function.
Our result becomes $\tilde \cO( \cM_2 (L_1^{1/2} d^{1/4} + L_\alpha^{1/(\alpha+1)} d^{\alpha/(2(\alpha+1))} )^2 / \delta^2 )$,
which improves $\tilde \cO( \sqrt{\cM_4} \max( L_1, L_\alpha^{2/(\alpha+1 )}) d /\delta  )$ in \citet[Theorem 5.4]{liang2022proximal} in terms of dimension. Our result for non-log-concave $\nu$  also improves the bounds in \citet{nguyen2021unadjusted,Liang2022APA} (see Table \ref{tab:t2}).

\section{Concluding remark}\label{sec:conclude}
We propose and analyze a novel RGO realization of the proximal sampler. The core of our analysis is a new Gaussian concentration inequality for semi-smooth functions, which is itself of independent interest.
With this concentration inequality, we significantly improve the dimension dependence of RGO.
We then analyze the accumulated error caused by our inexact RGO, which is combined with proximal sampler convergence results to give the total complexity. Our complexity bounds are better than almost all existing results in all the classical settings (strongly log-concave, log-concave, LSI, PI) as well as more general settings with semi-smooth potentials or composite potentials.

We leave a few directions for future study:
1) How to generalize our RGO algorithm to settings where the target potential $f$ is an empirical risk or population risk? This should be achievable by merging Algorithm \ref{alg:rgo} and \citet[Algorithm 2]{gopi2022private}. This will be useful for private optimization in differential privacy.
2) Is there any other RGO algorithm that has even better dimension dependence than Algorithm \ref{alg:rgo}? 
3) Our proof techniques make our concentration inequality only applicable to Gaussian distributions. Is there a similar concentration inequality as in Theorem \ref{thm:concentration} but for more general distributions satisfying LSI?

\acks{We thank Daogao Liu for useful discussions. We extend our gratitude to the anonymous reviewers for their
invaluable feedback that enhanced this manuscript. Financial support from NSF under grants 1942523, 2008513, and 2206576 is greatly acknowledged.}

\newpage
\bibliography{reference}
\bibliographystyle{icml_2022}

\newpage
\begin{appendix}

\tableofcontents
\newpage

\section{Proof of technical lemmas}
\label{sec:lemmas}
\subsection{Proof of Lemma \ref{lem:equi_rgo}}
\begin{proof}
Since $ f'(x_y) + \frac{1}{\eta } (x_y - y) = 0$, we have $x_y = y - \eta f'(x_y) .$ This implies
\begin{align}
g(x) + \frac{1}{2 \eta} \|x-x_y \|^2 
& = g(x) + \frac{1}{2 \eta} \|x- y + \eta f'(x_y) \|^2  \\
& = g(x) + \frac{1}{2 \eta} \|x- y \|^2 + 
 \< x,  f'(x_y) \>
-
\< y,  f'(x_y) \>
+ \frac{\eta}{2} \|f'(x_y)\|^2 \\
& =  g(x) + \frac{1}{2 \eta} \|x- y \|^2 + 
 \< x,  f'(x_y) \> + \text{constant} \\
 & =  f (x)+\frac{1}{2\eta}\|x-y\|^{2}+ \text{constant} .
\end{align}
We use $f(x) = g(x) + \< f'(x_y) , x \> $ in the last equality.
\end{proof}

\subsection{Proof of Lemma \ref{lem:dist_relation}}

\begin{proof}
Following the definition and Lemma \ref{lem:equi_rgo}, we have
\begin{align}
 \frac{\d \pi^{X|Y} }{ \d x} 
 & = \frac{ \exp(-f (x)-\frac{1}{2\eta}\|x-y \|^{2}) }{ \int \exp(-f (x)-\frac{1}{2\eta}\|x-y \|^{2}) \d x}  
 = \frac{ \exp(-g (x)-\frac{1}{2\eta}\|x-x_y \|^{2}) }{ \int \exp(-g (x)-\frac{1}{2\eta}\|x-x_y \|^{2}) \d x} \\
 & = \frac{ \exp(-g (x)) \exp(-\frac{1}{2\eta}\|x-x_y \|^{2}) / \int \exp(-\frac{1}{2\eta}\|x-x_y \|^{2}) \d x }{ \int \exp(-g (x)) (\exp(-\frac{1}{2\eta}\|x-x_y \|^{2}) /  \int \exp(-\frac{1}{2\eta}\|x-x_y \|^{2}) \d x ) \d x } \\ 
 & = \frac{\exp(-\frac{1}{2\eta}\|x-x_y \|^{2}) }{\int \exp(-\frac{1}{2\eta}\|x-x_y \|^{2}) \d x} \cdot \frac{ \exp(-g (x))  }{ \mE[ \exp(-g (x)) ] }  = \frac{\d \phi}{\d x} \cdot \frac{\exp(-g(x))}{ \mE_{x \sim \phi} \exp(-g(x)) }.
\end{align}
Next, since
\begin{align}
\mE[\rho | x] 
& = \mE[\exp( g(z) - g(x) ) | x] 
= \mE[\exp( g(z) )]  \exp(- g(x) ), \\
\mE[\rho] 
& = \mE[\exp( g(z) )]  \mE[ \exp(- g(x) )],
\end{align}
we get ${\exp(-g(x))}/{ \mE \exp(-g(x)) } = \mE[\rho | x]  / \mE[\rho] . $ Finally, since Algorithm \ref{alg:rgo} is rejection sampling, 
\begin{align}
 \frac{\d \hat \pi^{X|Y} }{ \d x} =     \frac{\d \phi}{\d x} \cdot \frac{ \Pr( u \le \frac{1}{2} \rho| x) }{ \Pr( u \le \frac{1}{2} \rho) } .
\end{align}
By tower property and the fact that $u$ follows the uniform distribution over $[0.\,1]$, the acceptance probability is 
$$\Pr( u \le \frac{1}{2 } \rho ) = \mE [\Pr( u \le \frac{1}{2} \rho |  \rho )  ]  = \frac{1}{2} \mE[\bar \rho] .$$ Similarly, $\Pr(u \le \frac{1}{2 } \rho | x) = \frac{1}{2} \mE[\bar \rho | x]$. These conclude that $ \frac{\d \hat \pi^{X|Y} }{ \d x} =     \frac{\d \phi}{\d x} \cdot \frac{\mE[\bar \rho | x] }{\mE[\bar \rho ] } .$
\end{proof}

\subsection{Proof of Lemma \ref{lem:tv_telecope}}
\begin{proof}
 Indeed, by triangular inequality and Jensen inequality, 
    \begin{align}
& \|\hat \mu_t(x) - \mu_t(x) \|_\TV  
= \left\| \int  \hat \psi_t(y) \hat \pi^{X|Y} (x |y) \d y - \int  \psi_t(y)  \pi^{X|Y} (x |y) \d y\right\|_\TV \\
\le & \left\| \int  \hat \psi_t(y) \left(\hat \pi^{X|Y} (x |y) -  \pi^{X|Y} (x |y) \right) \d y \right\|_\TV  + \left\| \int  (\hat \psi_t(y) - \psi_t(y) ) \pi^{X|Y} (x |y) \d y \right\|_\TV \\
\le &  \int  \hat \psi_t(y) \left\| \hat \pi^{X|Y} (x |y) -  \pi^{X|Y} (x |y)  \right\|_\TV \d y   + \int  \left(\left | \hat \psi_t(y) - \psi_t(y) \right |\int \pi^{X|Y}(x|y) \d x \right) \d y\\
\le & ~ \zeta + \left\| \hat \psi_t(y) - \psi_t(y) \right\|_\TV .
\end{align}
Moreover, denote $G$ as the density of the distribution $\cN(0,\eta \ide)$,
\begin{align}
\left\| \hat \psi_{t}(y) - \psi_{t}(y) \right\|_\TV 
=\left\| \int (\hat \mu_{t-1}(x) - \mu_{t-1} (x) ) G(y-x) \d x\right\|_\TV
\le \|\hat \mu_{t-1} - \mu_{t-1} \|_\TV.
\end{align}
Thus, $\|\hat \mu_t - \mu_t \|_\TV  \le   \zeta + \|\hat \mu_{t-1} - \mu_{t-1} \|_\TV $. Finally, because $\hat \psi_1 = \psi_1$, there is
$\|\hat \mu_{T} - \mu_{T} \|_\TV  \le   \zeta T .$
\end{proof}

\subsection{Proof of Lemma \ref{lem:w2_telecope}}

\begin{proof}
Denote the marginal distribution of $Y$ in the $t$-th iteration of the idea proximal sampler and approximate proximal sampler by $\psi_t, \hat\psi_t$ respectively. It is well known the Gaussian convolution is contractive with respect to the Wasserstein-2 metric. Thus
    \begin{align}\label{eq:w2_contra_1}
          W_2 (\psi_t, \hat\psi_t) \le W_2 (\mu_{t-1}, \hat \mu_{t-1}).     
    \end{align}
Following the Doob's $h$-transform, the RGO $\pi^{X|Y}$ can be realized by simulating the backward diffusion (see \cite[\S A]{chen2022improved}) 
    \[
        \d Z_s = -\nabla \log \pi_s (Z_s) \d t + \d B_s, 
    \]
over the time interval $[0,\, \eta]$, where $B_s$ is a standard Wiener process and $\pi_s = \nu * \cN(0, s \ide)$.
Let $Z_s, \hat Z_s$ be two copies of the above process initialized at $Z_\eta \sim \psi_t, \hat Z_\eta \sim \hat\psi_t$ respectively. We then use a coupling argument to show the RGO for log-concave distribution is also contractive with respect to $W_2$. In particular, let $Z_s, \hat Z_s$ be driven by a common Wiener process $B_s$ and $Z_\eta, \hat Z_\eta$ be coupled in such a way that $\mE \|Z_\eta - \hat Z_\eta\|^2 = W_2^2 (\psi_t, \hat \psi_t)$, then
    \[
        \frac{\d}{\d t}\|Z_s- \hat Z_s\|^2 = 2\langle - \nabla \log \pi_s (Z_s)+ \nabla \log \pi_s (\hat Z_s), Z_s- \hat Z_s\rangle \ge 0.
    \]
The last inequality is due to the fact that $\nu$ and thus $\pi_s$ is log-concave. It follows that 
    \begin{align}
        & W_2^2\left(\int \pi^{X|Y} (x |y)\psi_t(y) \d y, \int \pi^{X|Y} (x |y)\hat \psi_t(y) \d y \right) \\
        \le & \mE \|Z_0-\hat Z_0\|^2 \le \mE \|Z_\eta-\hat Z_\eta \|^2 = W_2^2 (\psi_t, \hat \psi_t). \label{eq:w2_contra_2}
   \end{align}
Moreover, by definition of Wasserstein distance,  we have
\begin{align}\label{eq:w2_contra_3}
W_2\left( \int  \hat \pi^{X|Y} (x |y) \hat \psi_t(y) \d y , \int \pi^{X|Y} (x |y) \hat \psi_t(y) \d y \right) 
\le \sqrt{\int \hat \psi_t(y) W_2^2( \hat \pi^{X|Y} , \pi^{X|Y} ) \d y} \le \zeta. ~~~~
\end{align}
By triangular inequality, we have
\begin{align}
& W_2( \hat \mu_t, \mu_t )  
= W_2\left( \int   \hat \pi^{X|Y} (x |y) \hat \psi_t(y) \d y , \int   \pi^{X|Y} (x |y) \psi_t(y) 
 \d y\right) \\
{\le} ~ & W_2\left( \int  \hat \pi^{X|Y} (x |y)  \hat \psi_t(y) \d y , \int   \pi^{X|Y} (x |y) \hat \psi_t(y)  \d y\right) \\
& +
W_2\left( \int  \pi^{X|Y} (x |y)  \hat \psi_t(y)  \d y , \int   \pi^{X|Y}  \psi_t(y) (x |y) \d y\right) \\
\overset{\eqref{eq:w2_contra_2}, \eqref{eq:w2_contra_3}}{\le} & \zeta + W_2^2 (\hat \psi_t,  \psi_t) \overset{\eqref{eq:w2_contra_1}}{\le} \zeta + W_2 (\hat \mu_{t-1},  \mu_{t-1}).
\end{align}
Finally, because $\hat \psi_1 = \psi_1$, there is
$W_2( \hat \mu_T , \mu_T ) \le \zeta T. $
\end{proof}

\subsection{Supportive lemmas}
In this section, we list the lemmas pertaining to our analysis.
\begin{lemma}\label{lem:comparison}
Under \ref{eq:pi}, \citet{liu2020poincare} together with standard comparison inequalities implies that 
\begin{align}
\max \left\{ \frac{\|\mu - \nu \|_\TV^2}{2} , \log \left(1+ \frac{C_\PI}{2} W_2^2(\mu, \nu) \right) , H_\nu(\mu) \right\}  \le  R_{2, \nu} (\mu),
\end{align}    
where $R_{2,\nu} (\mu):= \log \int (\mu^2/ \nu) $ ~is \Ren divergence of order 2.
\end{lemma}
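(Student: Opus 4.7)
My plan is to verify each of the three quantities in the maximum separately against $R_{2,\mu}(\mu) = \log\int(\mu^2/\nu) = \log(1+\chi_\nu^2(\mu))$. Only the Wasserstein bound uses the Poincar\'e assumption; the other two are pure probability-comparison statements. For the KL term, Jensen's inequality applied to the concave function $\log$ gives
\begin{align}
R_{2,\mu}(\mu) \;=\; \log \mE_{\mu}\!\left[\frac{\mu}{\nu}\right] \;\ge\; \mE_{\mu}\!\log\frac{\mu}{\nu} \;=\; H_\nu(\mu),
\end{align}
which is just the standard monotonicity $R_1 \le R_2$ of R\'enyi divergences in the order.

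Next I would dispose of the total variation term by chaining Pinsker's inequality $\|\mu-\nu\|_\TV^2 \le 2 H_\nu(\mu)$ with the KL bound just established, producing $\tfrac{1}{2}\|\mu-\nu\|_\TV^2 \le H_\nu(\mu) \le R_{2,\mu}(\mu)$. This uses no structural hypothesis on $\nu$ and is what is meant by ``standard comparison inequalities'' in the lemma statement.

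The only substantive step, and the one where the hypothesis $C_\PI$-\ref{eq:pi} is invoked, is the Wasserstein bound. Here I would cite the transport--chi-squared inequality of \citet{liu2020poincare}, which asserts that under $C_\PI$-\ref{eq:pi} every $\mu\ll\nu$ with finite chi-squared divergence satisfies
\begin{align}
W_2^2(\mu,\nu) \;\le\; \frac{2}{C_\PI}\, \chi_\nu^2(\mu) \;=\; \frac{2}{C_\PI}\!\left(e^{R_{2,\mu}(\mu)}-1\right).
\end{align}
Rearranging yields $1 + \tfrac{C_\PI}{2} W_2^2(\mu,\nu) \le \exp(R_{2,\mu}(\mu))$, and taking logarithms delivers the desired $\log\!\big(1 + \tfrac{C_\PI}{2} W_2^2(\mu,\nu)\big) \le R_{2,\mu}(\mu)$. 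Taking the max of the three upper bounds finishes the proof. The main obstacle is entirely packaged in the \citet{liu2020poincare} estimate; once that transport--R\'enyi bound is cited, the remaining work is a single line of Pinsker plus Jensen.
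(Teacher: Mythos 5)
Your proof is correct and follows exactly the route the paper intends: the paper states this lemma without a written proof, appealing to the transport--chi-squared inequality of \citet{liu2020poincare} (equivalently $W_2^2(\mu,\nu)\le \tfrac{2}{C_\PI}\chi_\nu^2(\mu)$ under \ref{eq:pi}) together with the standard comparisons $\tfrac12\|\mu-\nu\|_\TV^2 \le H_\nu(\mu) \le R_{2,\mu}(\mu)$ via Pinsker and R\'enyi monotonicity (Jensen), which is precisely your argument. No gaps to report.
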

\begin{lemma}[Proposition 7.10 in \citet{villani2021topics}]\label{lem:tv_w2}
    Let $\mu$ and $\nu$ be two probability measures on $\mR^d$. Then for any $ m \in \mR^d$, 
    \begin{align}
    W_2^2(\mu, \nu) 
    \le 2 \int \|m  - x\|_2^2 \d |\mu -\nu|(x)
    = 2 \left\| \|m  - \cdot\|_2^2  (\mu -\nu ) \right\|_\TV.
    \end{align}
\end{lemma}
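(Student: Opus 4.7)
The plan is to exhibit an explicit coupling $\gamma\in\Pi(\mu,\nu)$ whose quadratic cost is at most $2\int\|m-x\|^2\,d|\mu-\nu|(x)$; the bound on $W_2^2(\mu,\nu)$ then follows since it is the infimum of such costs over admissible couplings. The driving idea is the Jordan decomposition of the signed measure $\mu-\nu$: write $\mu-\nu=\rho_+-\rho_-$ with $\rho_\pm$ mutually singular nonnegative measures, and note that, because both $\mu$ and $\nu$ are probability measures, the total masses agree, $\rho_+(\Rd)=\rho_-(\Rd)=:\lambda$. Setting $\sigma:=\mu-\rho_+=\nu-\rho_-$ yields the decompositions $\mu=\sigma+\rho_+$ and $\nu=\sigma+\rho_-$, so that $\sigma$ captures the ``common mass'' of the two measures and $\rho_\pm$ capture the disagreement.

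The natural coupling to use is $\gamma:=(\operatorname{Id},\operatorname{Id})_{\#}\sigma+\lambda^{-1}\,\rho_+\otimes\rho_-$ when $\lambda>0$; the degenerate case $\lambda=0$ forces $\mu=\nu$ and both sides of the claim vanish. A direct check of the marginals (using $\rho_\pm(\Rd)=\lambda$) confirms $\gamma\in\Pi(\mu,\nu)$. The first summand is supported on the diagonal and contributes $0$ to $\int\|x-y\|^2\,d\gamma$, so all the cost comes from the product coupling between $\rho_+$ and $\rho_-$. For that part I would apply the elementary parallelogram-type inequality $\|x-y\|^2\le 2\|x-m\|^2+2\|y-m\|^2$, valid for any fixed $m\in\Rd$, and Fubini to split the integral:
\begin{align}
\lambda^{-1}\!\int\|x-y\|^2\,d(\rho_+\otimes\rho_-)(x,y)
&\le 2\!\int\|x-m\|^2\,d\rho_+(x)+2\!\int\|y-m\|^2\,d\rho_-(y)\\
&=2\!\int\|x-m\|^2\,d|\mu-\nu|(x),
\end{align}
where the last equality uses $|\mu-\nu|=\rho_++\rho_-$. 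Summing the contributions of the two summands of $\gamma$ gives the claimed bound on $W_2^2(\mu,\nu)$, and the equality in the statement is just the definition of the total variation norm of the signed measure $\|m-\cdot\|^2(\mu-\nu)$.

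There is no real obstacle; the argument is a one-shot coupling construction built from the Jordan decomposition. The only point worth flagging is that the factor $2$ is intrinsic to this approach, coming from the parallelogram bound: sharpening it would require a finer coupling (e.g.\ a Monge-type map between $\rho_+$ and $\rho_-$) that cannot be read off the decomposition alone, and is unnecessary for the downstream use, namely converting a total-variation-type error bound on the RGO into a $W_2$ bound by taking $m$ to be the current iterate.
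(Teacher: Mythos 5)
The paper does not supply a proof of this lemma; it cites \citet{villani2021topics} (Proposition 7.10) and moves on. Your argument is correct and is exactly the standard proof of such total-variation--to--Wasserstein comparison bounds, which is essentially what Villani does. The key points all check out: $\sigma = \mu - \rho_+ = \nu - \rho_-$ is indeed nonnegative (it equals $\nu$ on the Hahn set where $\mu-\nu\ge 0$ and $\mu$ on its complement), the total masses of $\rho_\pm$ agree because $\mu$ and $\nu$ are both probability measures, the marginals of $\gamma=(\operatorname{Id},\operatorname{Id})_\#\sigma+\lambda^{-1}\rho_+\otimes\rho_-$ are $\mu$ and $\nu$, and the parallelogram bound together with $|\mu-\nu|=\rho_++\rho_-$ gives the stated factor of $2$. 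The degenerate case $\lambda=0$ is handled correctly, and the final equality is just the definition $\|\xi\|_\TV=|\xi|(\mR^d)$ applied to $\xi=\|m-\cdot\|_2^2(\mu-\nu)$, whose total variation is $\|m-\cdot\|_2^2\,|\mu-\nu|$ because the weight is nonnegative. Nothing is missing.
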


\section{Gaussian concentration inequality}
\subsection{Proof of Theorem \ref{thm:concentration}}\label{sec:conc_proof}
\begin{proof}
 Without loss of generality, we assume $m = 0$. To see this, define $g(x) \coloneqq \ell(x+m)$ which is also an $L_{\alpha}$-$\alpha$-semi-smooth function and satisfy $ g'(0) = 0$, and notice that $\mE(\ell(X)) = \mE(g(Y))$ where  $Y \sim \cN(0,\eta\ide)$. It follows that
 \begin{align}
 \Pr(\ell(X)-\mE(\ell(X)) \geq r)
&= (2\pi\eta)^{-d/2}\int_{\Rd} \exp\left(-\frac{1}{2\eta}\|x-m\|_2^2\right)\mathbbm{1}_{\ell(x)-\mE(\ell(x)) \geq r}\d x \\
& = (2\pi\eta)^{-d/2}\int_{\Rd} \exp\left(-\frac{1}{2\eta}\|y\|_2^2\right)\mathbbm{1}_{g(y)-\mE(g(y)) \geq r} \d y\\
& = \Pr(g(Y)-\mE(g(Y)) \geq r).
\end{align}
Hence in what follows, $m = 0$. The following proof is based on the elegant argument of Maurey and Pisier \citep[Theorem 2.1]{pisier2006probabilistic}.
Let $G$ and $H$ be two independent Gaussian variables following $\cN(0,\eta\ide)$, then for any $\lambda >0$, by independence and Jensen's inequality, one has 

\begin{align}
\mE \exp (\lambda (\ell(G)-\ell(H)) ) 
&= \mE\exp (\lambda (\ell(G)-\mE \ell(G)) )\mE\exp (\lambda (\mE \ell(H)-\ell(H)) ) \\
& \geq \mE\exp (\lambda (\ell(G)-\mE \ell(G)) )\exp \mE (\lambda (\mE \ell(H)-\ell(H)) ) \\
& = \mE\exp (\lambda (\ell(G)-\mE \ell(G)) ).
\end{align}
Let $G_{\theta} \coloneqq G \sin\theta+ H\cos\theta$, $\theta \in [0,\pi/2]$. Using the fundamental theorem of calculus along $\theta$, one obtains
\begin{align}
\ell(G)-\ell(H) = \int_0^{\frac{\pi}{2}} \nabla \ell(G_{\theta}) \cdot (G\cos\theta-H\sin\theta)\d \theta.
\end{align}
It follows that
\begin{align}
\mE \exp (\lambda (\ell(G)-\ell(H)) ) 
& = \mE\exp\left(\lambda \int_0^{\frac{\pi}{2}} \nabla \ell(G_{\theta}) \cdot (G\cos\theta-H\sin\theta)\d\theta \right) \\
& \leq \frac{2}{\pi}\mE \int_0^{\frac{\pi}{2}}\exp\left(\frac{\pi}{2}\lambda  \nabla \ell(G_{\theta}) \cdot (G\cos\theta-H\sin\theta) \right)\d\theta \\
& =  \frac{2}{\pi} \int_0^{\frac{\pi}{2}} \mE\exp\left(\frac{\pi}{2}\lambda  \nabla \ell(G_{\theta}) \cdot (G\cos\theta-H\sin\theta) \right)\d\theta.
\end{align}
Since $G_{\theta}$ and $G\cos\theta-H\sin\theta$ are two independent Gaussian variables, $\nabla \ell(G_{\theta}) \cdot (G\cos\theta-H\sin\theta)$ is equidistributed as $Z\|\nabla \ell(G_{\theta})\|_2 $ where $Z \sim \cN(0,\eta)$ for any $\theta$. This implies
\begin{align}
\mE\exp\left(\frac{\pi}{2}\lambda  \nabla \ell(G_{\theta}) \cdot (G\cos\theta-H\sin\theta)\right) 
&= \mE\exp\left(\frac{\pi}{2}\lambda  Z\|\nabla \ell(G_{\theta})\|_2\right)\\
&= \mE_{G_{\theta}}\mE_Z\left(\exp\left(\frac{\pi}{2}\lambda  Z\|\nabla \ell(G_{\theta})\|_2 \right) \mid G_{\theta}\right) \\
&= \mE_{G_{\theta}}\exp\left(\frac{\pi^2}{8}\eta\lambda^2\|\nabla \ell(G_{\theta})\|_2^2\right).
\end{align}
As the distribution of $G_{\theta}$ does not depend on $\theta$, in the rest of the proof, we drop $\theta$. Then,
\begin{align}\label{eq:exp_grad}
\mE\exp (\lambda (\ell(G)-\mE \ell(G)) ) \leq \mE \exp (\lambda (\ell(G)-\ell(H)) )  \leq  \mE_{G}\exp\left(\frac{\pi^2}{8}\eta\lambda^2\|\nabla \ell(G)\|_2^2\right).
\end{align}
Combining with  Markov inequality yields 
\begin{align}
\Pr(\ell(X)-\mE(\ell(X))\geq r) 
& \leq \inf_{\lambda > 0} \frac{\mE \exp(\lambda(\ell(X)-\mE(\ell(X))))}{\exp(\lambda r)} \\
& \leq  \inf_{\lambda >0} \frac{\mE_{G}\exp(\frac{\pi^2}{8}\eta\lambda^2\|\nabla \ell(G)\|_2^2)}{\exp(\lambda r)}.
\end{align}
To study the properties of the optimization problem $\inf_{\lambda >0} \frac{\mE_{G}\exp(\frac{\pi^2}{8}\eta\lambda^2\|\nabla \ell(G)\|_2^2)}{\exp(\lambda r)}$, we consider three cases based on different $\alpha$.
\begin{enumerate}
\item $\alpha = 0$

In this case, $\ell$ is an $L_0$-Lipschitz function, which means $\|\nabla \ell(G)\|_2^2 \leq L_0^2$. Hence, our result coincides with the classical Gaussian concentration inequality for Lipschitz functions. Taking $\lambda = \frac{4r}{\pi^2L_0^2\eta }$ yields 
\begin{align}\label{eq:alpha0_conc}
\Pr(\ell(X)-\mE(\ell(X))\geq r) \leq  \exp(-\frac{2}{\pi^2}\frac{r^2}{L_0^2\eta }).
\end{align}
\item $\alpha = 1$

In this case, one could simplify the optimization function in an explicit way. Assume $-\frac{1}{2\eta} + \frac{\pi^2}{8}L_1^2\eta\lambda^2 <0$ (the condition that ensures $\mE_G$ is finite), then
\begin{align}
\frac{\mE_{G}\exp(\frac{\pi^2}{8}\eta\lambda^2\|\nabla \ell(G)\|_2^2)}{\exp(\lambda r)} 
& \leq \frac{\mE_{G}\exp(\frac{\pi^2}{8} L_1^2\eta\lambda^2\|G\|_2^2)}{\exp(\lambda r)} \\
& = \left(\frac{1}{1-\frac{\pi^2}{4}L_1^2\eta^2\lambda^2}\right)^{d/2}\exp(-\lambda r).
\end{align}
Let $\lambda = \frac{k}{L_1^2d\eta^2}$ where $k$ is a positive parameter. Here $k$ is chosen such that the condition  $-\frac{1}{2\eta} + \frac{\pi^2}{8}L_1^2\eta\lambda^2 <0$ holds. Then,
\begin{align}
\Pr(\ell(X)-\mE(\ell(X))\geq r) \leq  (1-\frac{\pi^2k^2}{4L_1^2d^2\eta^2})^{-\frac{d}{2}}\exp(-\frac{kr}{L_1^2d\eta^2}).
\end{align}
In the last step, let $\frac{\pi^2k^2}{4L_1^2d\eta^2}=\epsilon$, leading to
\begin{equation}\label{eq:alpha1_conc}
\Pr(\ell(X)-\mE(\ell(X))\geq r) \leq  (1-\frac{\epsilon}{d})^{-\frac{d}{2}}\exp(-\sqrt{\frac{4\epsilon}{\pi^2}}\frac{r}{L_1d^{1/2}\eta }), ~~ \forall \epsilon \in (0,d).
\end{equation}
Note that given the value of $\epsilon$, $(1-\frac{\epsilon}{d})^{-\frac{d}{2}}$ is bounded for $d$.

\item $0 <\alpha <1$

By Young's inequality, for any $\omega>0$, one obtains
$\|G\|^{2\alpha}_2 \leq \alpha\|G\|^2_2/\omega + (1-\alpha)\omega^{\frac{\alpha}{1-\alpha}} $.
Hence, with the assumption $1-\frac{\pi^2}{4}L_{\alpha}^2\eta^2\lambda^2\frac{\alpha}{\omega}>0$,
\begin{align}
\frac{\mE_{G}\exp(\frac{\pi^2}{8}\eta\lambda^2\|\nabla \ell (G)\|_2^2)}{\exp(\lambda r)} 
& \leq \frac{\mE_{G}\exp(\frac{\pi^2}{8} L_{\alpha}^2\eta\lambda^2\|G\|_2^{2\alpha})}{\exp(\lambda r)} \\
& \leq \frac{\mE_{G}\exp\left(\frac{\pi^2}{8} L_{\alpha}^2\eta\lambda^2(\alpha\|G\|^2/\omega + (1-\alpha)\omega^{\frac{\alpha}{1-\alpha}})\right)}{\exp(\lambda r)} \\
& = (1-\frac{\pi^2}{4}L_\alpha^2\eta^2\lambda^2\frac{\alpha}{\omega})^{-\frac{d}{2}}\exp(\frac{\pi^2}{8}L_\alpha^2\eta\lambda^2(1-\alpha)\omega^{\frac{\alpha}{1-\alpha}})\exp(-\lambda r). \label{eq:young_split}
\end{align}
Denote $F(\lambda, \omega) \coloneqq  (1-\frac{\pi^2}{4}L_\alpha^2\eta^2\lambda^2\frac{\alpha}{\omega})^{-\frac{d}{2}}\exp(\frac{\pi^2}{8}L_\alpha^2\eta\lambda^2(1-\alpha)\omega^{\frac{\alpha}{1-\alpha}})\exp(-\lambda r)$. Since the exact optimal solution of $\inf_{\lambda,\omega} F(\lambda, \omega)$ is not a closed-form expression, one can seek for the suboptimal values instead.

To this end, let 
\begin{align}\label{eq:param}
    \hat{\lambda} = \frac{k\,r^{\frac{1-\alpha}{1+\alpha}}}{L_\alpha^2 d^\alpha \eta^{\alpha+1}}, ~~\hat{\omega} = c\,r^\frac{2(1-\alpha)}{1+\alpha}\eta^{{1-\alpha}}d^{{1-\alpha}} ~~~\text{ for some } k>0, ~~c>0 
\end{align}

It follows that 
\begin{align}
F({\hat{\lambda},\hat{\omega}}) & = (1-\frac{\pi^2\alpha k^2}{4cL_\alpha^2d^{\alpha+1}\eta^{\alpha+1}})^{-d/2}\exp\left(-(k-\frac{\pi^2}{8}k^2(1-\alpha)c^{\frac{\alpha}{1-\alpha}}) \frac{r^{\frac{2}{\alpha+1}}}{L_\alpha^2d^\alpha\eta^{\alpha+1}}\right).
\end{align}
Similarly, let $\frac{\pi^2\alpha k^2}{4cL_\alpha^2d^{\alpha}\eta^{\alpha+1}} = \epsilon  \in (0,d)$, and plugging $k = \sqrt{\frac{4c\epsilon\eta^{\alpha+1}d^\alpha L_\alpha^2}{\pi^2\alpha}}$ into $F({\hat{\lambda},\hat{\omega}})$ yields
\begin{align}
F({\hat{\lambda},\hat{\omega}}) = (1-\frac{\epsilon}{d})^{-d/2}\exp\left(-D\frac{r^{\frac{2}{\alpha+1}}}{L_\alpha^2d^\alpha\eta^{\alpha+1}}\right) 
\end{align}
with 
\begin{equation}\nonumber
    D = \sqrt{\frac{4\epsilon L_\alpha^2d^\alpha \eta^{\alpha+1}}{\pi^2\alpha}}c^{1/2}-\frac{(1-\alpha)\epsilon L_\alpha^2 d^\alpha \eta^{\alpha+1}}{2\alpha}c^{\frac{1}{1-\alpha}}.
\end{equation}
Directly optimizing $D$ as a function of $c$ gives 
$$
\max D = (1+\alpha)\left(\frac{1}{\alpha}\right)^{\frac{\alpha}{1+\alpha}}\left(\frac{1}{\pi^2}\right)^{\frac{1}{1+\alpha}}2^{\frac{1-\alpha}{1+\alpha}}\epsilon^{\frac{\alpha}{\alpha+1}}{L_\alpha^{\frac{2\alpha}{\alpha+1}}d^{\frac{\alpha^2}{\alpha+1}}\eta^\alpha}.
$$
This implies
\begin{equation}\label{eq:theorem4_1}
    \Pr(f(X)-\mE(f(X))\geq r) \leq (1-\frac{\epsilon}{d})^{-d/2}\exp\left(-\frac{C\epsilon^{\frac{\alpha}{\alpha+1}}r^{\frac{2}{\alpha+1}}}{L_\alpha^{\frac{2}{\alpha+1}}d^{\frac{\alpha}{\alpha+1}}\eta}\right), \: \forall \epsilon \in (0,d),
\end{equation}
with $C$ being $(1+\alpha)\left(\frac{1}{\alpha}\right)^{\frac{\alpha}{1+\alpha}}\left(\frac{1}{\pi^2}\right)^{\frac{1}{1+\alpha}}2^{\frac{1-\alpha}{1+\alpha}}$.

It is noteworthy that \eqref{eq:theorem4_1} can recover the results for both smooth functions and Lipschitz functions. More precisely, plugging $\alpha=1$ into \eqref{eq:theorem4_1} leads to \eqref{eq:alpha1_conc}. Moreover, if $\alpha = 0$, by choosing $\epsilon \rightarrow 0$, \eqref{eq:theorem4_1} coincides with \eqref{eq:alpha0_conc}.

\end{enumerate}

\end{proof}

\subsection{Proof of Corollary \ref{cor:composite_concentration}}

\begin{proof}
By Theorem \ref{thm:concentration} and a probabilistic uniform bound, we have for any $\sum_{j=1}^n w_j =1, ~w_j \ge 0$
\begin{align}
\Pr( \ell(X)-\mE(\ell(X))\geq r) 
\leq \left(1-\frac{\epsilon}{d} \right)^{-\frac{d}{2}}
\sum_{j=1}^n 
 \exp\left(-\frac{C_j \epsilon^{\frac{{\alpha_j}}{1+{\alpha_j}}}(w_j r)^{\frac{2}{1+{\alpha_j}}}}{L_{\alpha_j}^{\frac{2}{1+{\alpha_j}}}d^{\frac{{\alpha_j}}{1+{\alpha_j}}}\eta}\right), ~~ \forall \epsilon \in (0,d).
\end{align}
\end{proof}

\subsection{Relations to existing concentration inequalities}\label{sec:conc_relation}
We now compare Theorem \ref{thm:concentration} with other existing concentration inequalities. In this section, we assume $X \sim \cN( 0, \eta \ide)$.
\paragraph{Lipschitz function $\alpha=0$.} 
Taking $\epsilon \rightarrow 0 $,
our concentration can be simplified to 
\begin{align}\label{eq:conc_alpha0}
\Pr( \ell(X)-\mE(\ell(X))\geq r) 
\leq 
\exp\left(-\frac{2
 r^{2}}{\pi^2 L_0^{2} 
\eta}\right).
\end{align}
Obviously, it has a sub-optimal coefficient because based on the entropy argument~\citep{boucheron2013concentration}, one can obtain
\begin{align}
\Pr( \ell(X)-\mE(\ell(X))\geq r) 
\leq 
\exp\left(-\frac{
r^{2}}{2 L_0^{2} 
\eta}\right).    
\end{align}
\paragraph{Smooth function $\alpha=1$.}
Taking $\epsilon =0.5 $,
our concentration bound is simplified to 
\begin{align}\label{eq:smooth_conc_ours}
\Pr( \ell(X)-\mE(\ell(X))\geq r) 
\leq 1.5
\exp\left(-\frac{
0.45 r}{ L_1 \sqrt{d}
\eta}\right).
\end{align}
Suppose $Y = \sum_{i=1}^d Y_i^2, ~Y_i \sim \cN( 0, \ide)$, then $Y \sim \chi^2(d)$ follows the chi-squared distribution, and $\mE [Y] =d $. Thus, taking  $\ell(X) = \|X\|^2 $,
the \textbf{Laurent-Massart bound}~\citep{laurent2000adaptive} can be equivalently written as
\begin{align} 
    \Pr( \|X\|^2 - \mE(\|X\|^2) \ge 2 \sqrt{\mE(\ell(X)) r } + 2r  ) \le \exp(-r).
\end{align}
This further implies, if $X \sim \cN(0, \eta \ide)$, it holds that
\begin{align}\label{eq:smooth_conc_chi}
    \Pr( \|X\|^2 - \mE(\|X\|^2) \ge r )
    \le \exp\left(- \frac{ d+r /\eta - \sqrt{d(d+2r /\eta )} }{2} \right),
\end{align}
Take $L_1 = 2, \eta =1$, we compare \eqref{eq:smooth_conc_ours} and \eqref{eq:smooth_conc_chi}  in terms of $d$. Since $(d+r - \sqrt{d(d+2r)}) \sqrt{d}/ r \rightarrow 0$ for a fixed $r$,
 our bound is even tighter on dimension when $d$ is large enough. 

On the other hand, we compare with \textbf{Hanson-Wright inequality}~\citep{rudelson2013hanson}: Let $A$ be an $d \times d $ matrix, 
\begin{align}\label{eq:hw}
\Pr( X^\top A X - \mE  (X^\top A X)  > r  )\le \exp \left( -c \min\left( \frac{4 r^2}{\eta^2 \|A\|^2_{\text{HS}} }  ,  \frac{2 r}{ \eta \|A\| } \right) \right).
\end{align}
Take $\ell(X) =  X^\top A X$, and
suppose $A$ is positive semi-definite, then $L_1 = \lambda_{max} (2A) = 2 \|A\| $, thus
our bound becomes 
\begin{align}
\Pr(X^\top A X -\mE( X^\top A X )\geq r) 
\leq 1.5
\exp\left(-\frac{
0.225 r}{ \|A\| \sqrt{d} \eta }\right).  
\end{align}
When $r$ is large enough, $2r/\eta$ would dominate, and Hanson-Wright is tighter than our bound in terms of $d$. When $r$ is in 
  a small range,   $r^2/d \eta^2 $ dominates, and our bound is tighter on dimension.

\subsection{Low range concentration inequality has sub-Gaussian tail}
In this part, we show that no matter what $0 \le \alpha \le 1$ is, we can always get a sub-Gaussian concentration when $r$ is in a low range.
\begin{proposition}\label{prop:low_range}
Let $X \sim \cN(m,\eta\ide)$ be a Gaussian random variable in $\Rd$, and let $\ell$ be an $L_\alpha$-$\alpha$-semi-smooth function. Assume $ \ell'(m) = 0$. Then for any $ ~0 \le \alpha \le 1$, 
\begin{align}\label{eq:low_range}
0 < r \le \frac{\pi L_\alpha  d^{\frac{1+\alpha}{2}} \eta^{\frac{1+\alpha}{2}} }{\sqrt{\alpha 2^{\alpha}}},    
\end{align}
 we have 
\begin{align}\label{eq:low_range_conc}
\Pr( \ell(X)-\mE(\ell(X))\geq r) 
\leq 
\exp\left(-\frac{ 
r^{2}}{\pi^2 L_\alpha^{2 }d^{{\alpha}}\eta^{1+\alpha}}\right).
\end{align}
\end{proposition}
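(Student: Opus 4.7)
The plan is to re-use the reduction to $m=0$ and the Maurey--Pisier MGF estimate established inside the proof of Theorem \ref{thm:concentration}, namely
\begin{align}
\mE \exp\big(\lambda(\ell(X)-\mE\ell(X))\big) \leq \mE_{G}\exp\!\left(\tfrac{\pi^2}{8} L_\alpha^2 \eta\lambda^2 \|G\|_2^{2\alpha}\right), \qquad G\sim\cN(0,\eta\ide),
\end{align}
which is valid for every $\lambda>0$. The difference with Theorem \ref{thm:concentration} is that I will not choose the Young parameter $\omega$ as a function of $r$; I will instead pick $\omega$ as a fixed quantity of the order of the typical value of $\|G\|_2^{2(1-\alpha)}$, namely $\omega = (d\eta)^{1-\alpha}$. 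The purpose is to turn the right-hand side into a \emph{sub-Gaussian} MGF of the form $\exp(c L_\alpha^2 d^\alpha \eta^{1+\alpha}\lambda^2)$, as opposed to the stretched-exponential MGF that Theorem \ref{thm:concentration} yields.

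Concretely, after applying $x^{2\alpha} \leq \alpha x^2/\omega + (1-\alpha)\omega^{\alpha/(1-\alpha)}$ with this choice of $\omega$, the Young remainder $(1-\alpha)\omega^{\alpha/(1-\alpha)} = (1-\alpha)(d\eta)^\alpha$ becomes $\lambda$-free, and the linear-in-$\|G\|^2$ part has coefficient $t = \frac{\pi^2 L_\alpha^2 \alpha \lambda^2 \eta^\alpha}{8 d^{1-\alpha}}$. Since $\mE\exp(t\|G\|^2) = (1-2t\eta)^{-d/2}$, invoking the elementary inequality $(1-x)^{-d/2}\leq \exp(dx)$ on the range $x\leq 1/2$ converts this into $\exp\!\big(\tfrac{\pi^2}{4} L_\alpha^2 \alpha \lambda^2 d^\alpha \eta^{1+\alpha}\big)$. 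Combining the two contributions produces
\begin{align}
\mE\exp\big(\lambda(\ell(X)-\mE\ell(X))\big)\leq \exp\!\left(\tfrac{\pi^2}{8} L_\alpha^2 d^\alpha \eta^{1+\alpha}(1+\alpha)\lambda^2\right),
\end{align}
valid whenever $2t\eta\leq 1/2$, i.e.\ whenever $\lambda^2 \leq \frac{2 d^{1-\alpha}}{\pi^2 \alpha L_\alpha^2 \eta^{1+\alpha}}$.

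Markov's inequality with the optimal $\lambda^\star \propto r/(L_\alpha^2 d^\alpha \eta^{1+\alpha})$ then yields the sub-Gaussian tail $\exp\!\big(-\tfrac{2r^2}{\pi^2(1+\alpha) L_\alpha^2 d^\alpha \eta^{1+\alpha}}\big)$, and since $2/(1+\alpha)\geq 1$ for $\alpha\in[0,1]$, this immediately implies the claimed bound \eqref{eq:low_range_conc}. Finally, substituting the optimal $\lambda^\star$ back into the feasibility condition $\lambda^{\star 2}\leq \frac{2 d^{1-\alpha}}{\pi^2 \alpha L_\alpha^2 \eta^{1+\alpha}}$ rearranges algebraically into an upper bound on $r$ of the form $r\leq \text{const}\cdot \pi L_\alpha d^{(1+\alpha)/2}\eta^{(1+\alpha)/2}/\sqrt{\alpha}$, which matches the shape of \eqref{eq:low_range}.

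The main obstacle is purely arithmetic: matching the exact constant $1/\sqrt{\alpha\, 2^\alpha}$ in \eqref{eq:low_range}. The choice $\omega = (d\eta)^{1-\alpha}$ and the blunt truncation $x\leq 1/2$ give a constant of the form $(1+\alpha)/(2\sqrt{2\alpha})$ which, at the endpoint $\alpha=1$, already coincides with $1/\sqrt{\alpha 2^\alpha}$, but is slightly smaller for $0<\alpha<1$. To hit the stated constant one can either (a) retain the exact bound $(1-x)^{-d/2}=\exp(-\tfrac{d}{2}\log(1-x))$ and jointly optimize $(\lambda,\omega,x)$ using $-\log(1-x)\leq \tfrac{x}{1-x}$, or (b) use the dichotomy $\|G\|^{2\alpha}\leq \max(1,\|G\|^2)\leq 1+\|G\|^2$ scaled by a judiciously chosen constant; both routes introduce a $2^\alpha$ factor through the $\log 2$ arising when rebalancing $(1-\alpha)$ and $\alpha$ contributions in the exponent. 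Once that routine bookkeeping is done, the proof is complete.
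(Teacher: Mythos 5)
Your route is essentially the paper's own: reduce to a centered Gaussian, invoke the Maurey--Pisier MGF bound, apply Young's inequality with an $r$-independent parameter $\omega$ of order $(d\eta)^{1-\alpha}$, bound the resulting factor $(1-2t\eta)^{-d/2}$ by $\exp(2t\eta d)$ under the constraint $2t\eta\le 1/2$, take $\lambda$ linear in $r$ in Markov's inequality, and read the low-range restriction off the feasibility constraint. The paper's proof of Proposition \ref{prop:low_range} does exactly this (after rescaling to $\eta=1$), so in spirit your proof is the same one.

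The one genuine shortfall is the admissible range of $r$. With $\omega=(d\eta)^{1-\alpha}$ and the exactly optimized $\lambda^\star$, your feasibility condition gives $r\le \frac{\pi(1+\alpha)}{2\sqrt{2\alpha}}\,L_\alpha (d\eta)^{\frac{1+\alpha}{2}}$, which for $0<\alpha<1$ is strictly smaller than the stated threshold with constant $1/\sqrt{\alpha 2^\alpha}$; as written you have proved the proposition only on a sub-range, as you acknowledge. However, the repair is not the joint $(\lambda,\omega,x)$ optimization or the dichotomy $\|G\|^{2\alpha}\le 1+\|G\|^2$ that you sketch; it is a simpler change of bookkeeping, and it is precisely what the paper does: choose $\omega=(2d\eta)^{1-\alpha}$ instead of $(d\eta)^{1-\alpha}$, so that the linear-in-$\|G\|^2$ contribution (after $(1-x)^{-d/2}\le e^{dx}$) and the constant Young remainder recombine into $\tfrac{\pi^2}{8}L_\alpha^2\lambda^2 (2d)^{\alpha}\eta^{1+\alpha}\le \tfrac{\pi^2}{4}L_\alpha^2\lambda^2 d^{\alpha}\eta^{1+\alpha}$, and then take the deliberately suboptimal $\lambda=\tfrac{2r}{\pi^2 L_\alpha^2 d^\alpha\eta^{1+\alpha}}$. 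This still produces exactly the tail \eqref{eq:low_range_conc}, while the constraint $2t\eta\le 1/2$ becomes $\tfrac{\alpha r^2}{\pi^2 L_\alpha^2\, 2^{1-\alpha} d^{1+\alpha}\eta^{1+\alpha}}\le \tfrac12$, which is precisely the stated range \eqref{eq:low_range}. With that adjustment your argument coincides with the paper's proof; note also that the enlargement of $\omega$ by the factor $2^{1-\alpha}$ is exactly what converts your $1/\sqrt{2\alpha}$-type constant into $1/\sqrt{\alpha 2^\alpha}$.
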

\begin{proof}
It suffices to prove the case $\eta =1$ because we can define $\hat \ell(X)= \ell(\sqrt{\eta} X) $ and correspondingly $\hat L_\alpha =L_\alpha \eta^{{(1+\alpha)}/2} $. Similar to the proof of Theorem \ref{thm:concentration}, we use  Maurey and Pisier argument and apply Young's inequality, and obtain
\begin{align}\label{eq:conc_template}
&\Pr(\ell(X)-\mE(\ell(X))\geq r) \le \inf_{\lambda > 0,~ \omega >0} F(\lambda, \omega) \\
\text{where} ~~
F(\lambda, \omega ): = & \left(1-\frac{\pi^2}{4}L_\alpha^2
\lambda^2\frac{\alpha}{\omega} \right)^{-\frac{d}{2}}
\exp\left(\frac{\pi^2}{8}L_\alpha^2
\lambda^2(1-\alpha)\omega^{\frac{\alpha}{1-\alpha}}\right) \exp(-\lambda r).    
\end{align}
We define $H = \frac{\pi^2}{8} \lambda^2 L_\alpha^2 $. Firstly, we assume that (will prove in the end) 
\begin{align} \label{eq:half_constraint}
\frac{2H \alpha}{\hat \omega } \in [0,0.5],
\end{align} 
which suffices to conclude
\begin{align}
\left(1-\frac{\pi^2}{4}L_\alpha^2
\lambda^2\frac{\alpha}{\hat \omega} \right)^{-\frac{d}{2}} 
= \left(1-  \frac{2H \alpha}{\hat \omega} \right)^{-\frac{d}{2}} \le \exp\left(\frac{2 H \alpha d}{\hat \omega}\right) .
\end{align}
Plugging this inequality, and the value $\hat \omega = (2d)^{1-\alpha} $ into $F(\lambda , \omega)$, we obtain
\begin{align}
 F(\lambda , \hat \omega ) \le &\exp\left(\frac{2 H \alpha d}{(2d)^{1-\alpha} } + H (1-\alpha) (2d)^\alpha - \lambda r \right) \\
\le & \exp(2H d^\alpha - \lambda r) =  \exp \left(\frac{\pi^2}{4} \lambda^2 L_\alpha^2 d^\alpha - \lambda r \right).
\end{align}
We then take $\hat \lambda = \frac{2 r}{ \pi^2 L_\alpha^2 d^\alpha } $, and obtain $F( \hat \lambda, \hat \omega ) = \exp\left(-\frac{ r^{2}}{\pi^2 L_\alpha^{2 }d^{{\alpha}}}\right).$
This together with \eqref{eq:conc_template} gives \eqref{eq:low_range_conc}.
Finally, to make \eqref{eq:half_constraint} hold, we need to insure 
\begin{align}
    \frac{2H \alpha}{ \hat \omega} = \frac{ \pi^2 \lambda^2 L_\alpha^2 \alpha }{4 \hat \omega } = \frac{r^2 \alpha }{ \pi^2 L_\alpha^2 2^{1-\alpha} d^{1+\alpha} } \le 0.5,
\end{align}
which is equivalent to the constraint \eqref{eq:low_range}.
\end{proof}

Proposition \ref{prop:low_range} can recover the Hanson-Wright inequality \eqref{eq:hw} in the low range. 
If we take $A = \ide$, then \eqref{eq:hw} becomes
\begin{align}
  \Pr( \|X\|^2 - \mE( \|X\|^2 ) \ge r ) \le \exp \left( - c \min\left( \frac{4r^2}{  d \eta^2 }, \frac{2r}{  \eta} \right) \right).
\end{align}
This implies that, when $r \le \Theta(d \eta )$, $$\Pr( \|X\|^2 - \mE( \|X\|^2 ) \ge r ) \le \exp \left( -   \frac{cr^2}{  d \eta^2 }\right) . $$
It is easy to check that Proposition \ref{prop:low_range} shows exactly the same result when $\alpha=1$.

\section{Proof of main results}\label{sec:main_result_proof}
\subsection{Proof of Theorem \ref{thm:rgo} }

\begin{proof}
Recall that the return of Algorithm \ref{alg:rgo} follows the distribution $ \hat \pi^{X|Y}$, and the target distribution of RGO is $\pi^{X|Y} \propto \exp(-g(x) - \frac{1}{2 \eta} \|x - x_y\|^2 ) $.
According to Lemma \ref{lem:dist_relation}, we have 
\begin{align}
 \| \pi^{X|Y} - \hat \pi^{X|Y} \|_\TV 
 & =  \mE \left| \frac{V}{ \mE V } - \frac{\overline V}{ \mE \overline V}  \right| 
 \le  \mE \left| \frac{V}{ \mE V } - \frac{\overline V}{ \mE  V}  \right| + \mE \left| \frac{\overline V}{ \mE  V } - \frac{\overline  V}{ \mE \overline V}  \right|  \\
 & \le \frac{\mE |V - \overline V|}{ |\mE V|} +  \frac{ \mE  [\overline V] |\mE [V - \overline V]|}{ |\mE V| | \mE  \overline V| }
 \le \frac{2\mE |V - \overline V|}{ |\mE V|}.
\end{align}
Then by Cauchy-Schwartz inequality on the Hilbert space $L^2(\phi)$, we have
\begin{align}
    & |\mE V| = \mE V = \mE_{x \sim \phi} \exp(-g(x)) \mE_{x \sim \phi} \exp(g(x)) = \|\exp(-g(x)/2)\|_{L^2(\phi)}^2 \|\exp(g(x)/2)\|_{L^2(\phi)}^2  \nonumber \\
    \ge & \< \exp(-g(x)/2) , \exp(g(x) /2)\>^2 = [\mE_{x\sim \phi}(\exp(-g(x)/2) \exp(g(x)/2))]^2 =1. \label{eq:bound_V}
\end{align}
Since $\rho = \exp( g(z) -g(x) )$ is always non-negative, there is
    \begin{align}
       \mE|V- \overline V | =  \mE| \mE [\rho | x] - \mE [\bar \rho | x] | =  \mE[(\rho -2) \one_{\rho \ge 2} ]
       \le 
       \mE[\rho \one_{\rho \ge 2} ].
    \end{align}
Denote $\Delta = g(z) - g(x)$ and $\bar \Delta = \Delta / \log 2$, we have
\begin{align}
\mE[\rho \one_{\rho \ge 2} ] 
& =
\mE[\exp( \Delta) \one_{\rho \ge 2} ] = \mE[\exp( \Delta) \one_{\Delta \ge \log 2 } ]
= \mE[\exp( \bar \Delta \log 2) \one_{ \bar \Delta \ge 1 } ]  \\
& \le \sum_{i=1}^\infty \exp( (i+1)\log 2) \Pr(\bar \Delta \ge i) .
\end{align}
Note that $g(x) = f(x) - \<  f'(x_y), x\>$ satisfies that $ g'(x_y) =0 $, and $\mE[x] = x_y$. Moreover, $g(x)$ is also $L_\alpha$-$\alpha$-semi-smooth because $ g'(x_1) -  g'(x_2) =  f'(x_1) - f'(x_2)  $ for any $x_1,x_2$.
We also have the inequality $(1-0.5/a)^{-a/2} \le 1.5$  when $a \ge 1$.
We plug $\epsilon = 0.5 $ into  Theorem \ref{thm:concentration}, and obtain that, for $\forall r >0$,
\begin{align}\label{eq:const_concentrate}
\Pr[g(x) - \mE g(x) \ge r ] \le \frac{3}{2} \exp \left(  - \frac{
\Cepshalf
r^{\frac{2}{1+\alpha}}
 }{
 L_\alpha^{\frac{2}{\alpha+1}}d^{\frac{\alpha}{\alpha+1}}\eta
 } 
 \right) .
\end{align}
This further implies that 
\begin{align}
\Pr[g(z) - g(x) \ge r ] 
& = \Pr[g(z) - \mE g(z) + \mE g(x) -  g(x) \ge r ] \\
& \le \Pr \left[g(z) - \mE g(z) \ge \frac{r}{2} \right]+ \Pr \left[\mE g(x) -  g(x) \ge \frac{r}{2} \right] \\
& = \Pr \left[g(z) - \mE g(z) \ge \frac{r}{2} \right]+ \Pr \left[ -  g(x) - \mE [-g(x)] \ge \frac{r}{2} \right] \\
& \le 3 \exp \left( - \frac{
\Cepshalf
\left(\frac{r}{2}\right)^{\frac{2}{1+\alpha}}
 }{
 L_\alpha^{\frac{2}{\alpha+1}}d^{\frac{\alpha}{\alpha+1}} 
 \eta } 
 \right) .
\end{align}
Thus 
$
\Pr[\bar \Delta \ge i ] \le 
3 \exp \left( -\frac{
\Cepshalf
\left(\frac{i \log 2}{2}\right)^{\frac{2}{1+\alpha}}
 }{
 L_\alpha^{\frac{2}{\alpha+1}}d^{\frac{\alpha}{\alpha+1}} 
 \eta } 
 \right) . $
Denote $$C_\eta : = \frac{
\Ceta
}{
  L_\alpha^{\frac{2}{\alpha+1}}d^{\frac{\alpha}{\alpha+1}}
 \eta } -1 . $$ 
 Since $\log 2 < 1$ and  $i^{\frac{2}{1+\alpha}} \ge i$ for any $i \ge 1$, $0 \le \alpha \le 1$, we have that 
 \begin{align}
& \sum_{i=1}^\infty \exp( (i+1)\log 2) \Pr(\bar \Delta \ge i) 
\le  6 \sum_{i=1}^\infty 
\exp \left( i - \frac{
\Cepshalf
\left(\frac{i \log 2}{2}\right)^{\frac{2}{1+\alpha}}
 }{
 L_\alpha^{\frac{2}{\alpha+1}}d^{\frac{\alpha}{\alpha+1}}
 \eta } 
 \right) \\
 \le  & 6 \sum_{i=1}^\infty 
\exp \left(  - C_\eta
 i \right)  = \frac{6}{\exp(C_\eta) -1 } \le \frac{\zeta }{2} \label{eq:bound_E_V_bar_V}
 \end{align}
 by the choice 
 \begin{align} \label{eq:eta_full}
     \eta \le  \frac{
\Ceta
}{
  L_\alpha^{\frac{2}{\alpha+1}}d^{\frac{\alpha}{\alpha+1}} (1+ \log (1+ 12/\zeta )) }. 
 \end{align}
 Since 
\begin{align}\label{eq:alpha_bound}
  0.024 < 
\Ceta
< 0.16   
 \end{align}
when $0 \le \alpha \le 1 $, 
the bound
$$\eta \le \frac{1}{\Consteta L_\alpha^{\frac{2}{\alpha+1}}d^{\frac{\alpha}{\alpha+1}}(1+ \log (1+ 12/\zeta )) } $$ suffices.
When $\alpha=0$, it can recover the bound in \citet{gopi2022private}. 

Finally, by Lemma \ref{lem:dist_relation},
the acceptance probability is $\frac{1}{2} \mE[\overline V] $. Since $\zeta< 1 $, we have 
\begin{align}\label{eq:low_bound_bar_V}
 \mE[\overline V] \ge \mE[ V] -  \mE|V - \overline V| \ge  1- \frac{\zeta}{2} \ge \frac{1}{2} .   
\end{align}
Thus, the expected number of the iterations for the rejection sampling step is $\cO(1)$.
\end{proof}

\subsection{Proof of Theorem \ref{thm:rgo_w2} }

\begin{proof}
It suffices to prove the claim:
\textit{If the step size 
\begin{align}\label{eq:w2_eta_simple}
\eta \le \min \left(  \frac{1}{
 \Consteta L_\alpha^{\frac{2}{\alpha+1}}d^{\frac{\alpha}{\alpha+1}}
(2+ \log( 1+ \frac{192  (d^2 + 2d)}{ \zeta^2} )) } , 1\right)  ,  
\end{align}
then Algorithm \ref{alg:rgo} returns a random point $x$ that has $\zeta$ \textbf{squared}  Wasserstein-2 distance to the distribution $\pi^{X|Y}$.  Furthermore, if ~$0< \zeta <8d $, then the algorithm access $\cO(1)$ many $f(x)$ in expectation.}

Recall that the return of Algorithm \ref{alg:rgo} follows the distribution $ \hat \pi^{X|Y}$, and the target distribution of RGO is $\pi^{X|Y} \propto \exp(-g(x) - \frac{1}{2 \eta} \|x - x_y\|^2 ) $.
By Lemma \ref{lem:tv_w2} (\citet[Proposition 7.10]{villani2021topics}) and triangular inequality,
    \begin{align}
W_2^2( \pi^{X|Y} , \hat \pi^{X|Y} ) & \le 2 \left\| \| \cdot - x_y \|_2^2 (\pi^{X|Y} - \hat \pi^{X|Y} ) \right\|_\TV 
= 2 \mE \left( \|x - x_y \|_2^2 \left| \frac{V}{\mE V} - \frac{\overline V}{\mE \overline V} \right| \right) \\
& \le 2  \mE \|x - x_y \|_2^2 \left( \left| \frac{V}{ \mE V } - \frac{\overline V}{ \mE  V}  \right| +  \left| \frac{\overline V}{ \mE  V } - \frac{\overline  V}{ \mE \overline V}  \right| \right)  \\
&\le \frac{ 2 \mE [ \|x - x_y\|^2 |V - \overline V| ] }{ \mE V } 
+  \frac{ 2 \mE [\|x- x_y\|^2 \overline V |\mE [V - \overline V]|]}{ \mE V  \mE  \overline V } \\
& \overset{\eqref{eq:bound_V}}{\le} 2 \mE [ \|x - x_y\|^2 |V - \overline V| ] 
+  \frac{ 2 \mE |V - \overline V| \cdot \mE [\|x- x_y\|^2 \overline V ]}{  \mE  \overline V } . \label{eq:w2_2_terms}
    \end{align}
Firstly, by 
Cauchy-Schwartz inequality,
\begin{align}\label{eq:w2_1st_term_cs}
 \mE [ \|x - x_y\|^2 |V - \overline V| ] \le \left( \mE  \|x - x_y\|^4 \mE |V - \overline V|^2  \right)^\frac{1}{2} .
\end{align}
 Denote $G \sim \cN(0, \ide )$.  Since $x \sim \cN(x_y, \eta \ide )$ and the constraint $\eta \le 1$, we have 
\begin{align}
\mE  \|x - x_y\|^4 
& =  \eta^2 \mE   \|G\|^4  =  \eta^2 \left( \sum_i \mE [ G_i^4] + \sum_{i \neq j} [\mE  ~G_i^2~] [\mE  ~G_j^2] \right) \\
& = \eta^2(3d + d^2-d) =  \eta^2(d^2 + 2d) \le  d^2 + 2d. \label{eq:4_moment}
\end{align}
On the other hand, following the same logic of bounding $\mE [V - \overline V]$ in Theorem \ref{thm:rgo}, we have
\begin{align}
\mE |V - \overline V|^2 
& =    \mE | \mE[ \rho - \bar \rho  |x ]  |^2
\le   \mE \mE[ | \rho - \bar \rho |^2 |x ]  
\le  \mE | \rho - \bar \rho |^2   
\le \mE [\rho^2 \one_{\rho \ge 2}  ] \\
& \le \sum_{i=1}^\infty \exp( 2 (i+1)\log 2) \Pr(\bar \Delta \ge i) . \label{eq:v_diff_squ}
\end{align}
Denote 
$$C_\eta : = \frac{
\Ceta
}{
  L_\alpha^{\frac{2}{\alpha+1}}d^{\frac{\alpha}{\alpha+1}}
 \eta } - 2.  $$ 
 Since the bound \eqref{eq:alpha_bound},
we can verify that the assumption \eqref{eq:w2_eta_simple}
satisfies 
\begin{align}\label{eq:w2_eta_choice}
\eta \le \frac{
\Ceta
}{
  L_\alpha^{\frac{2}{\alpha+1}}d^{\frac{\alpha}{\alpha+1}}
(2+ \log( 1+ \frac{192  (d^2 + 2d)}{ \zeta^2} )) }, 
 \end{align}
By the concentration inequality \eqref{eq:const_concentrate}, following the same proof of bounding \eqref{eq:bound_E_V_bar_V},
we have
 \begin{align}
& \sum_{i=1}^\infty \exp( 2 (i+1)\log 2) \Pr(\bar \Delta \ge i) \\
\overset{\eqref{eq:const_concentrate}}{\le}   & 12 \sum_{i=1}^\infty 
\exp \left( 2 i - \frac{
\Cepshalf
\left(\frac{i \log 2}{2}\right)^{\frac{2}{1+\alpha}}
 }{
 L_\alpha^{\frac{2}{\alpha+1}}d^{\frac{\alpha}{\alpha+1}}
 \eta } 
 \right) \\
 \le  & 12 \sum_{i=1}^\infty 
\exp \left(  - C_\eta
 i \right)  = \frac{12}{\exp(C_\eta) -1 } 
 \overset{\eqref{eq:w2_eta_choice}}{\le}  \frac{\zeta^2}{ 
 16 (d^2 + 2d) }  .
 \end{align}
 This, together with the inequalities \eqref{eq:w2_1st_term_cs}~\eqref{eq:4_moment}, gives that 
 \begin{align}\label{eq:w2_term1}
  \mE [ \|x - x_y\|^2 |V - \overline V| ]  \le \frac{\zeta}{4} .   
 \end{align}
 Now we bound the second term in \eqref{eq:w2_2_terms}. By the assumption 
 \eqref{eq:w2_eta_simple},  we have
\begin{align}\label{eq:w2_eta_choice2}
      \eta \le \frac{1}{\Consteta L_\alpha^{\frac{2}{\alpha+1}}d^{\frac{\alpha}{\alpha+1}}(1+ \log (1+ 
 96 d / \zeta
 )) } .
 \end{align}
 Thus, following the same steps in bounding
 \eqref{eq:bound_E_V_bar_V} and \eqref{eq:low_bound_bar_V}, we obtain 
\begin{align}\label{eq:V_and_difference}
    \mE |V - \overline V| \le
    \frac{\zeta}{ 16 d}  ~~\text{and}~~
    \mE[ \overline V] \ge \frac{1}{2}.
\end{align}
Next, since $0 \le \overline{V} := \mE[\bar \rho | x] \le 2$, there is
\begin{align}\label{eq:weighted_bar_V}
\mE [\|x- x_y\|^2 \overline V ] 
\le 2 \mE [\|x- x_y\|^2 ]  
= 2 \eta d.
\end{align}
By the choice $\eta \le 
1
$ 
  and the bounds \eqref{eq:V_and_difference} and \eqref{eq:weighted_bar_V}, we have 
\begin{align}
 \frac{ \mE |V - \overline V| \cdot \mE [\|x- x_y\|^2 \overline V ]}{  \mE  \overline V } 
 \le \frac{\zeta}{4} .
\end{align}
This, together with \eqref{eq:w2_2_terms} and \eqref{eq:w2_term1}, gives that $W_2^2( \pi^{X|Y} , \hat \pi^{X|Y} ) \le \zeta .$

Finally, because of the choice $\zeta< 8d $, and the bound \eqref{eq:V_and_difference}, we have  $  \mE |V - \overline V| 
\le
    \frac{1}{ 2}  $.
Thus, the acceptance probability $\frac{1}{2} \mE[\overline V]
 \ge \frac{1}{2}( \mE[ V] -  \mE|V - \overline V| )
\ge \frac{1}{4} $.  So the expectation of iterations in rejection sampling follows as $\cO(1)$.
\end{proof}

\subsection{Proof of Proposition \ref{prop:strongly_convex}}

\begin{proof}
{\bf (For TV distance)}
According to Theorem \ref{thm:rgo}, if choosing $\eta \le \frac{1}{\Consteta
L_1 \sqrt{d}
(1+ \log (1+ 24 T /\delta )) } $, we can guarantee $ \left\| \hat \pi^{X|Y} (\cdot |y) -  \pi^{X|Y} (\cdot |y)  \right\|_\TV \le  \delta /(2T) $ for $\forall ~y$. Then by Lemma \ref{lem:tv_telecope}, we have $ \| \hat \mu_T - \mu_T \|_\TV \le \frac{\delta}{2}.$

By Pinsker's inequality, there is $\| \mu_T - \nu\|_\TV \le \sqrt{2 H_\nu(\mu_T) }$.
Then by Theorem \ref{thm:proximal_convergence} \citep[Theorem 3]{chen2022improved},  Algorithm \ref{algo:AlternatingSampler} returns a random point $x_T$ that satisfies
\begin{align}
H_\nu(\mu_T) \le \frac{H_\nu(\mu_0)}{(1+ \eta\beta )^{2T}} \le \frac{\delta^2}{8}
\end{align}
in 
$T \ge  {\log \left( \frac{2}{ \delta } \sqrt{ {2 H_\nu (\mu_0)} } \right)}/{\log(1+\beta \eta)} $
steps. Thus $\|\mu_T - \nu\| \le \delta /2$.

Putting two together, we have
\begin{align}
\| \hat \mu_T - \nu \|_\TV  \le   \| \hat \mu_T - \mu_T \|_\TV  + \| \mu_T - \nu \|_\TV  \le \delta .
\end{align}
Since $ \eta \beta = \cO(1)$, we have $\log(1+ \eta \beta ) = \cO( \eta \beta )$.
Thus, plugging in the value of $\eta$, we need 
$$T =   \cO  \left( \frac{ L_1 \sqrt{d}}{ \beta }  \log \left( \frac{L_1 \sqrt{d}}{ \beta \delta } \right)  \log \left( \frac{ \sqrt{H_\nu (\mu_0)}}{ \delta} \right) \right) $$ steps. Each step accesses only $\cO(1)$ many $f(x)$ in expectation because of Theorem \ref{thm:rgo}.  

{\bf (For Wasserstein distance)} Since $\nu$ is $\beta$-strongly-log-concave, it satisfies Talagrand inequality
\begin{align}
      W_2(\mu_T, \nu) \le \sqrt{\frac{2}{\beta} H_\nu(\mu_T)}.
\end{align}
      Next, Theorem \ref{thm:proximal_convergence} \citep[Theorem 3]{chen2022improved} guarantees that if 
    $T \ge  {\log \left( \frac{2}{ \delta } \sqrt{ \frac{2 H_\nu (\mu_0)}{ \beta} } \right)}/{\log(1+\beta\eta)} $
    then
\begin{equation}
H_\nu (\mu_T) \leq \frac{ H_\nu \left({\mu}_0 \right)}{(1+ \beta \eta)^{2T}} 
\le \frac{ \beta \delta^2}{8} .
\end{equation}    
Thus, $ W_2(\mu_T, \nu)\le \frac{\delta}{2} . $
On the other hand, according to Theorem \ref{thm:rgo_w2}, if choosing $$
\eta \le \min \left(  \frac{1}{\Consteta L_1 d^{\frac{1}{2}}(2+ \log( 1+ {3100  (d^2 + 2d)T^4}/{ \delta^4} )) } , 1\right)
,
$$ we can guarantee 
$ W_2(\hat \pi^{X|Y} (\cdot |y) ,  \pi^{X|Y} (\cdot |y) ) \le  \delta /2T $ for $\forall ~y$. Then Lemma \ref{lem:w2_telecope} guarantees that $W_2( \mu_T , \hat \mu_T  ) \le \frac{\delta}{2} .$ 
Putting two pieces together, we get 
\begin{align}
    W_2( \hat \mu_T, \nu ) \le W_2( \hat \mu_T, \mu_T ) + W_2( \mu_T, \nu ) \le \delta.
\end{align}
Since $\beta \eta = \cO(1)$, we have $\log(1+ \beta \eta) = \cO(\beta \eta)$.
So, plugging in the value of $\eta$, we only need  $T =\cO \left( \frac{1}{\beta \eta} {\log \left( \frac{2}{ \delta } \sqrt{ \frac{2 H_\nu (\mu_0)}{ \beta} } \right)} \right) 
= \cO \left( \frac{ L_1 \sqrt{d}}{\beta} \log\left( \frac{L_1 d }{\beta \delta} \right) {\log \left( \frac{1}{ \delta } \sqrt{ \frac{ H_\nu (\mu_0)}{ \beta} } \right)} \right) 
$ steps.
Each step accesses only $\cO(1)$ many $f(x)$ in expectation because of Theorem \ref{thm:rgo_w2}.
\end{proof}

\subsection{Proof of Proposition \ref{prop:convex}}

\begin{proof}
According to Theorem \ref{thm:rgo}, if choosing $\eta \le
\frac{1}{\Consteta L_\alpha^{\frac{2}{\alpha+1}}d^{\frac{\alpha}{\alpha+1}}(1+ \log (1+ 24 T/\delta )) } $, we can guarantee $ \left\| \hat \pi^{X|Y} (\cdot |y) -  \pi^{X|Y} (\cdot |y)  \right\|_\TV \le  \delta /(2T) $ for $\forall ~y$. Then by Lemma \ref{lem:tv_telecope}, we have $ \| \hat \mu_T - \mu_T \|_\TV \le \frac{\delta}{2}.$

By Pinsker's inequality, there is $\| \mu_T - \nu\|_\TV \le \sqrt{2 H_\nu(\mu_T) }$.
Since $f(x)$ is convex and $L_1$-smooth,
by Theorem \ref{thm:proximal_convergence} \citep[Theorem 2]{chen2022improved},  Algorithm \ref{algo:AlternatingSampler} returns a random point $x_T$ that satisfies
\begin{align}
H_\nu(\mu_T) \le \frac{W_2^2(\mu_0, \nu)}{T \eta} \le \frac{\delta^2}{8}
\end{align}
in 
$T = \cO \left( {W_2^2(\mu_0, \nu)}/{(\delta^2 \eta)} \right) $  steps. Thus $\|\mu_T - \nu\| \le \delta /2$.

Putting two together, we have
\begin{align}
\| \hat \mu_T - \nu \|_\TV  \le   \| \hat \mu_T - \mu_T \|_\TV  + \| \mu_T - \nu \|_\TV  \le \delta .
\end{align}
Thus, plugging in the value of $\eta$, we need 
$$
T= \cO \left( \frac{W_2^2(\mu_0, \nu) L_\alpha^{ \frac{2}{\alpha+1} } d^{\frac{\alpha}{\alpha+1}}
}{\delta^2 } 
\log \left( \frac{W_2^2(\mu_0, \nu) L_\alpha^{ \frac{2}{\alpha+1} } d^{\frac{\alpha}{\alpha+1}}}{\delta^3 }\right)
\right)
$$ steps. Each step accesses only $\cO(1)$ many $f(x)$ in expectation because of Theorem \ref{thm:rgo}. 
\end{proof}

\subsection{Proof of Proposition \ref{prop:lsi}}

\begin{proof}
According to Theorem \ref{thm:rgo}, if choosing $\eta \le
\frac{1}{\Consteta L_1 \sqrt{d}(1+ \log (1+ 24 T/\delta )) } $, we can guarantee $ \left\| \hat \pi^{X|Y} (\cdot |y) -  \pi^{X|Y} (\cdot |y)  \right\|_\TV \le  \delta /(2T) $ for $\forall ~y$. Then by Lemma \ref{lem:tv_telecope}, we have $ \| \hat \mu_T - \mu_T \|_\TV \le \frac{\delta}{2}.$

By Pinsker's inequality, there is $\| \mu_T - \nu\|_\TV \le \sqrt{2 H_\nu(\mu_T) }$.
Then by Theorem \ref{thm:proximal_convergence} \citep[Theorem 3]{chen2022improved},  Algorithm \ref{algo:AlternatingSampler} returns a random point $x_T$ that satisfies
\begin{align}
H_\nu(\mu_T) \le \frac{H_\nu(\mu_0)}{(1+ \eta C_\LSI )^{2T}} \le \frac{\delta^2}{8}
\end{align}
in 
$T \ge  {\log \left( \frac{2}{ \delta } \sqrt{ {2 H_\nu (\mu_0)} } \right)}/{\log(1+ C_\LSI\, \eta)} $
steps. Thus $\|\mu_T - \nu\| \le \delta /2$.

Putting two together, we have
\begin{align}
\| \hat \mu_T - \nu \|_\TV  \le   \| \hat \mu_T - \mu_T \|_\TV  + \| \mu_T - \nu \|_\TV  \le \delta .
\end{align}
Since $ \eta C_\LSI = \cO(1)$, we have $\log(1+ \eta C_\LSI ) = \cO( \eta C_\LSI )$.
Thus, plugging in the value of $\eta$, we need 
$$T =   \cO  \left( \frac{ L_1 \sqrt{d}}{ C_\LSI } \log \left( \frac{L_1 \sqrt{d} }{ C_\LSI \delta} \right)  \log \left( \frac{ \sqrt{H_\nu (\mu_0)}}{ \delta} \right) \right) $$ steps. Each step accesses only $\cO(1)$ many $f(x)$ in expectation because of Theorem \ref{thm:rgo}. 
\end{proof}

\subsection{Proof of Proposition \ref{prop:pi}}

\begin{proof}
According to Theorem \ref{thm:rgo}, if choosing $\eta \le
\frac{1}{\Consteta L_\alpha^{\frac{2}{\alpha+1}}d^{\frac{\alpha}{\alpha+1}}(1+ \log (1+ 24 T/\delta )) } $, we can guarantee $ \left\| \hat \pi^{X|Y} (\cdot |y) -  \pi^{X|Y} (\cdot |y)  \right\|_\TV \le  \delta /(2T) $ for $\forall ~y$. Then by Lemma \ref{lem:tv_telecope}, we have $ \| \hat \mu_T - \mu_T \|_\TV \le \frac{\delta}{2}.$

Together with Pinsker's inequality, \citet{
nishiyama2020relations} implies that $\| \mu_T - \nu\|_\TV \le \sqrt{2 \log( (1+ \chi_\nu^2 (\mu_T) )) }$.
Then by \citet[Theorem 4]{chen2022improved},  Algorithm \ref{algo:AlternatingSampler} returns a random point $x_T$ that satisfies
\begin{align}
\chi_\nu^2 (\mu_T) \le 
\frac{\chi_\nu^2 (\mu_0) }{(1 + C_\PI \eta )^{2T}}
\le \exp( \delta^2 /
8
) -1
\end{align}
in 
$T \ge \frac{1}{2} {\log \left( 
\frac{\chi_\nu^2(\mu_0)}{\exp( \delta^2 /8 ) -1  }
 \right)}/{\log(1+ C_\PI \eta)} 
 = \cO \left( {\log \left( 
\frac{\chi_\nu^2(\mu_0)}{ \delta^2 }
 \right)}/{\log(1+ C_\PI \eta)} \right) $  steps. Thus $\|\mu_T - \nu\| \le \delta /2$.

Putting two together, we have
\begin{align}
\| \hat \mu_T - \nu \|_\TV  \le   \| \hat \mu_T - \mu_T \|_\TV  + \| \mu_T - \nu \|_\TV  \le \delta .
\end{align}
Since $ \eta C_\PI = \cO(1)$, we have $\log(1+ \eta C_\PI ) = \cO( \eta C_\PI )$.
Thus, plugging in the value of $\eta$, we need 
$$T =  \cO \left( 
 \frac{L_\alpha^{\frac{2}{\alpha+1}}d^{\frac{\alpha}{\alpha+1}} }{C_\PI}\log \left(\frac{L_\alpha^{\frac{2}{\alpha+1}} d^{\frac{\alpha}{\alpha+1}} 
}{C_\PI \delta } \right) \log \left( \frac{\chi_\nu^2(\mu_0)}{\delta^2} \right) \right) $$ steps. Each step accesses only $\cO(1)$ many $f(x)$ in expectation because of Theorem \ref{thm:rgo}. 
\end{proof}

\subsection{Proof of Theorem \ref{thm:composite_rgo_tv}}

 \begin{proof}
Recall that the return of Algorithm \ref{alg:rgo} follows the distribution $ \hat \pi^{X|Y}$, and the target distribution of RGO is $\pi^{X|Y} \propto \exp(-g(x) - \frac{1}{2 \eta} \|x - x_y\|^2 ) $.
 Following the same proof of Theorem \ref{thm:rgo}, we have
$\| \pi^{X|Y} - \hat \pi^{X|Y} \|_\TV 
 \le \frac{2\mE |V - \overline V|}{ |\mE V|},$
 $|\mE V| \ge 1$, and $
 \mE[V- \overline V ] 
       \le       \mE[\rho \one_{\rho \ge 2} ]$.

Denote $\Delta = g(z) - g(x)$ and $\bar \Delta = \Delta / \log 2$, we have
\begin{align}
\mE[\rho \one_{\rho \ge 2} ] 
& =
\mE[\exp( \Delta) \one_{\rho \ge 2} ] = \mE[\exp( \Delta) \one_{\Delta \ge \log 2 } ]
= \mE[\exp( \bar \Delta \log 2) \one_{ \bar \Delta \ge 1 } ]  \\
& \le \sum_{i=1}^\infty \exp( (i+1)\log 2) \Pr(\bar \Delta \ge i) .
\end{align}
Note that $g(x) = f(x) - \<  f'(x_y), x\>$ satisfies that $ g'(x_y) = \sum_{i=j}^n  (f'_j(x_y) -  f'_j(x_y) )=  0 $, and $\mE[x] = x_y$. Moreover, $g(x)$ also satisfies \eqref{eq:composite}
because $ g'_j(x_1) -  g'_j(x_2) =  f'_j(x_1) -  f'_j(x_2)  $ for any $x_1,x_2$.
We also have the inequality $(1-0.5/a)^{-a/2} \le 1.5$  when $a \ge 1$.
We plug $\epsilon = 0.5 $ in Corollary \ref{cor:composite_concentration}, and obtain that, for $\forall r >0$,
\begin{align}
\Pr[g(x) - \mE g(x) \ge r ] \le \frac{3}{2}
\sum_{j=1}^n
\exp \left(  - \frac{
\Cjepshalf
(w_jr)^{\frac{2}{1+{\alpha_j}}}
 }{
 L_{\alpha_j}^{\frac{2}{{\alpha_j}+1}}d^{\frac{{\alpha_j}}{{\alpha_j}+1}}\eta
 } 
 \right) .
\end{align}
This further implies that 
\begin{align}
\Pr[g(z) - g(x) \ge r ] 
& = \Pr[g(z) - \mE g(z) + \mE g(x) -  g(x) \ge r ] \\
& \le \Pr \left[g(z) - \mE g(z) \ge \frac{r}{2} \right]+ \Pr \left[\mE g(x) -  g(x) \ge \frac{r}{2} \right] \\
& = \Pr \left[g(z) - \mE g(z) \ge \frac{r}{2} \right]+ \Pr \left[ -  g(x) - \mE [-g(x)] \ge \frac{r}{2} \right] \\
& \le 3 \sum_{j=1}^n \exp \left( - \frac{
\Cjepshalf
\left(\frac{w_j r}{2}\right)^{\frac{2}{1+{\alpha_j}}}
 }{
 L_{\alpha_j}^{\frac{2}{{\alpha_j}+1}}d^{\frac{{\alpha_j}}{{\alpha_j}+1}} 
 \eta } 
 \right) .
\end{align}
Thus 
$
\Pr[\bar \Delta \ge i ] \le 
3 \sum_{j=1}^n \exp \left( -\frac{
\Cjepshalf
\left(\frac{w_j i \log 2}{2}\right)^{\frac{2}{1+{\alpha_j}}}
 }{
 L_{\alpha_j}^{\frac{2}{{\alpha_j}+1}}d^{\frac{{\alpha_j}}{{\alpha_j}+1}} 
 \eta } 
 \right) . $
 We choose
\begin{align}\label{eq:weight}
     w_j = \frac{L_{\alpha_j}^{\frac{1}{\alpha_j+1}} d^{ \frac{\alpha_j}{ 2(\alpha_j+1)} }}{ \sum_{j=1}^n L_{\alpha_j}^{\frac{1}{\alpha_j+1}} d^{ \frac{\alpha_j}{ 2(\alpha_j+1) } } } .
 \end{align}
 Since 
 $i^{\frac{2}{1+\alpha}} \ge i$ for any $i \ge 1$, $0 \le \alpha \le 1$, and the bound \eqref{eq:alpha_bound}, we have that 
 \begin{align}
& \sum_{i=1}^\infty  \exp( (i+1)\log 2) \Pr(\bar \Delta \ge i) \\
\le &  6 \sum_{i=1}^\infty \sum_{j=1}^n
\exp \left( i - \frac{
\Cjepshalf
\left(\frac{w_j i \log 2}{2}\right)^{\frac{2}{1+{\alpha_j}}}
 }{
 L_{\alpha_j}^{\frac{2}{{\alpha_j}+1}}d^{\frac{{\alpha_j}}{{\alpha_j}+1}}
 \eta } 
 \right) \\
 \le &  6 \sum_{i=1}^\infty \sum_{j=1}^n
\exp \left( i - \frac{
w_j^2 i
}{ \Consteta
 L_{\alpha_j}^{\frac{2}{{\alpha_j}+1}}d^{\frac{{\alpha_j}}{{\alpha_j}+1}}
 \eta } 
 \right) 
 \overset{\eqref{eq:weight}}{=} 6 n \sum_{i=1}^\infty 
\exp \left( i - \frac{ i
}{\Consteta
 \left( \sum_{j=1}^n L_{\alpha_j}^{\frac{1}{\alpha_j+1}} d^{ \frac{\alpha_j}{ 2(\alpha_j+1)} } \right)^2
 \eta } 
 \right) 
 \\
 = & {6n} /\left({\exp \left(\frac{ 1
}{\Consteta
 \left( \sum_{j=1}^n L_{\alpha_j}^{\frac{1}{\alpha_j+1}} d^{ \frac{\alpha_j}{ 2(\alpha_j+1)} } \right)^2
 \eta } -1 \right) -1 } \right)
 \le \frac{\zeta }{2} \label{eq:bound_E_V_bar_V_composite}
 \end{align}
 by the choice 
$$\eta \le \frac{1}{\Consteta  \left( \sum_{j=1}^n L_{\alpha_j}^{\frac{1}{\alpha_j+1}} d^{ \frac{\alpha_j}{ 2(\alpha_j+1)} } \right)^2(1+ \log (1+ 12 n /\zeta )) } .$$ 
Finally, by Lemma \ref{lem:dist_relation},
the acceptance probability is $\frac{1}{2} \mE[\overline V] $. Since $\zeta< 1 $, we have 
\begin{align}
 \mE[\overline V] \ge \mE[ V] -  \mE|V - \overline V| \ge  1- \frac{\zeta}{2} \ge \frac{1}{2} .   
\end{align}
Thus, the expected number of the iterations for the rejection sampling step is $\cO(1)$.
\end{proof}

\subsection{Proof of Theorem \ref{thm:composite_rgo_w2}}

 \begin{proof}
 It suffices to prove the claim:
\textit{If the step size 
\begin{align}\label{eq:w2_eta_composite}
\eta \le \min \left(  \frac{1}{
 \Consteta  \left( \sum_{j=1}^n L_{\alpha_j}^{\frac{1}{\alpha_j+1}} d^{ \frac{\alpha_j}{ 2(\alpha_j+1)} } \right)^2
(2+ \log( 1+ {192n  (d^2 + 2d)}/{ \zeta^2} )) } , 1\right)  ,  
\end{align}
then Algorithm \ref{alg:rgo} returns a random point $x$ that has $\zeta$ \textbf{squared}  Wasserstein-2 distance to the distribution $\pi^{X|Y}$.  Furthermore, if ~$0< \zeta <8d $, then the algorithm access $\cO(1)$ many $f(x)$ in expectation.}

Recall that the return of Algorithm \ref{alg:rgo} follows the distribution $ \hat \pi^{X|Y}$, and the target distribution of RGO is $\pi^{X|Y} \propto \exp(-g(x) - \frac{1}{2 \eta} \|x - x_y\|^2 ) $.
 Following the same proof of Theorem \ref{thm:rgo_w2}, we have 
\begin{align}
    W_2^2( \pi^{X|Y} , \hat \pi^{X|Y} ) 
    & \le 2 \left( \mE  \|x - x_y\|^4 \mE |V - \overline V|^2  \right)^\frac{1}{2} + \frac{ 2 \mE |V - \overline V| \cdot \mE [\|x- x_y\|^2 \overline V ]}{  \mE  \overline V } , \label{eq:w2_2_terms_composite} \\
    \mE  \|x - x_y\|^4 
    & \le   d^2 + 2d, \label{eq:fourth_moment} \\
    \mE |V - \overline V|^2 
    & \le \sum_{i=1}^\infty \exp( 2 (i+1)\log 2) \Pr(\bar \Delta \ge i), \label{eq:w2_v_2} \\
    \mE [\|x- x_y\|^2 \overline V ] 
& \le 
2 \eta d. \label{eq:w2_2_moment}
 \end{align}
Applying the same concentration inequality and weight \eqref{eq:weight} in the proof of Theorem \ref{thm:composite_rgo_tv}, we have
\begin{align}
&    \sum_{i=1}^\infty \exp( 2(i+1)\log 2) \Pr(\bar \Delta \ge i) \\
 \le & 12n /\left({\exp \left(\frac{ 1
}{\Consteta
 \left( \sum_{j=1}^n L_{\alpha_j}^{\frac{1}{\alpha_j+1}} d^{ \frac{\alpha_j}{ 2(\alpha_j+1)} } \right)^2
 \eta } - 2 \right) -1 } \right) 
\overset{\eqref{eq:w2_eta_composite}}{\le} \frac{\zeta^2}{ 
 16 (d^2 + 2d) }.
\end{align}
 This, together with the inequalities 
 \eqref{eq:fourth_moment}, \eqref{eq:w2_v_2},
 gives that 
 \begin{align}\label{eq:w2_term1_composite}
  2 \left( \mE  \|x - x_y\|^4 \mE |V - \overline V|^2  \right)^\frac{1}{2}  \le \frac{\zeta}{4} .   
 \end{align}
  Now we bound the second term in \eqref{eq:w2_2_terms_composite}.
  By the assumption \eqref{eq:w2_eta_composite},  we have
\begin{align}
      \eta \le \frac{1}{\Consteta \left( \sum_{j=1}^n L_{\alpha_j}^{\frac{1}{\alpha_j+1}} d^{ \frac{\alpha_j}{ 2(\alpha_j+1)} } \right)^2 (1+ \log (1+ 
 96 n d / \zeta
 )) } .
 \end{align}
  Thus, follow the same steps in bounding
 \eqref{eq:bound_E_V_bar_V_composite} and \eqref{eq:low_bound_bar_V}, we obtain 
\begin{align}
    \mE |V - \overline V| \le
    \frac{\zeta}{ 16 d}  ~~\text{and}~~
    \mE[ \overline V] \ge \frac{1}{2}.
\end{align}
By the choice $\eta \le 1$, the above bounds, and the inequalities \eqref{eq:w2_2_terms_composite}, \eqref{eq:w2_2_moment}, we reach the result $W_2^2( \pi^{X|Y} , \hat \pi^{X|Y} ) \le \zeta .$ 

The expectation of iterations in rejection sampling also follows the proof of Theorem \ref{thm:rgo_w2}.
\end{proof}

\section{RGO with approximate proximal optimization error}\label{sec:approximate_opt}

In Algorithm \ref{alg:rgo} of Section \ref{sec:exact_rgo}, we assume that we can solve the step \ref{step:proximal_opt} exactly, which is to solve the stationary point of $f_y^\eta (x) = f(x) + \frac{1}{2 \eta} \|x-y\|^2$.
However, it could be challenging to solve this optimization problem exactly when $f$ is non-convex. 
In this section, we tackle this issue and present the complexity analysis when we can only obtain an approximate stationary point of it.

\subsection{Algorithm}

\begin{algorithm2e}

\caption{Rejection sampling implementation of RGO with proximal optimization error \label{alg:rgo_appro}}

\textbf{Input}: $L_\alpha$-$\alpha$-semi-smooth function $f(x)$, step size $\eta>0$, current
point $y$

Compute an approximation solution $x_y$ satisfying 
\eqref{eq:stationary_error_tv} 
 by using Algorithm 3 in \citet{Liang2022APA}. Denote $g(x) = f(x) - \< f'(x_y)) , x \> ,~ w = y - \eta f'(x_y) $. \label{step:opt_inacc}

\Repeat{$u\leq\frac{1}{2}\rho$}{

Sample $x,z$ from the distribution $ 
\phi (\cdot) 
\propto\exp( -\frac{1}{2\eta}\|\cdot - w\|^{2}_2)$

$\rho = \exp(g(z) - g(x))$

Sample $u$ uniformly from $[0,1]$.
}
\textbf{Return} $x$

\end{algorithm2e}

We will use Algorithm \ref{alg:rgo_appro} as the RGO algorithm, and Lemma \ref{lem:dist_relation} still holds for this algorithm.
We resort to 
Nesterov's accelerated gradient descent method
to compute $w$ that is an $s$-approximation to $x_y$, i.e., $\|x_y - w\| \le s$, see \cite[Algorithm 3]{Liang2022APA}. And we have the following lemma to guarantee a similar equivalence as in Lemma \ref{lem:equi_rgo}.

\begin{lemma}\label{lem:non_exact_equi_rgo}
For any $x_y$, sampling from $ \pi^{X|Y} (x|y) \propto \exp(-f (x)-\frac{1}{2\eta}\|x-y\|^{2})$ is equivalent to sampling from distribution $\propto \exp (-  g(x) - \frac{1}{2 \eta} \|x - w \|^2 )$, where $g(x) = f(x) - \< f'(x_y) , x \> $ and $w = y - \eta f'(x_y) $.
If we further assume  $x_y $ is an approximate stationary point to $f(x) + \frac{1}{2\eta} \|x-y\|^2$, i.e., 
\begin{align}\label{eq:approx_stationary}
\left \| f'(x_y) + \frac{1}{\eta } ( x_y - y) \right \|  \le  \frac{s}{\eta} ,
\end{align}
 then we have $\|x_y - w\| \le s.$
\end{lemma}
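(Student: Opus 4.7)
The plan is to prove the two parts of the lemma as direct algebraic calculations, modeled on the proof of Lemma \ref{lem:equi_rgo}. No concentration or probabilistic arguments are needed here, so I do not anticipate any substantive obstacle; the only care required is to expand the quadratic $\|x-w\|^2$ with $w = y - \eta f'(x_y)$ and track which terms are independent of $x$ (and hence absorbed into the normalizing constant).

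For the equivalence claim, I would substitute $w = y - \eta f'(x_y)$ into the target potential and expand:
\begin{align}
g(x) + \tfrac{1}{2\eta}\|x-w\|^2
&= f(x) - \langle f'(x_y), x\rangle + \tfrac{1}{2\eta}\|(x-y) + \eta f'(x_y)\|^2 \\
&= f(x) - \langle f'(x_y), x\rangle + \tfrac{1}{2\eta}\|x-y\|^2 + \langle f'(x_y), x-y\rangle + \tfrac{\eta}{2}\|f'(x_y)\|^2 \\
&= f(x) + \tfrac{1}{2\eta}\|x-y\|^2 - \langle f'(x_y), y\rangle + \tfrac{\eta}{2}\|f'(x_y)\|^2.
\end{align}
The last two terms do not depend on $x$, so after exponentiating and normalizing, the distributions $\exp(-g(x) - \tfrac{1}{2\eta}\|x-w\|^2)$ (normalized) and $\pi^{X|Y}(x|y) \propto \exp(-f(x) - \tfrac{1}{2\eta}\|x-y\|^2)$ coincide. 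Note that this step does \emph{not} require $x_y$ to be an exact or even approximate stationary point; the identity is purely algebraic, which is exactly why replacing the exact $x_y$ of Algorithm \ref{alg:rgo} by an approximate one in Algorithm \ref{alg:rgo_appro} preserves the validity of the equivalence.

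For the quantitative bound $\|x_y - w\| \le s$, I would simply write
\begin{equation}
x_y - w = x_y - y + \eta f'(x_y) = \eta \left(f'(x_y) + \tfrac{1}{\eta}(x_y - y)\right),
\end{equation}
take norms, and apply the hypothesis \eqref{eq:approx_stationary} to conclude $\|x_y - w\| = \eta\,\|f'(x_y) + \tfrac{1}{\eta}(x_y - y)\| \le \eta \cdot (s/\eta) = s$. This bound is the bridge used later to control the error introduced by the inexact proximal optimization: since the proposal distribution $\phi$ in Algorithm \ref{alg:rgo_appro} is centered at $w$ rather than at $x_y$, the concentration inequality in Theorem \ref{thm:concentration} (which assumes the Gaussian is centered at the stationary point of $g$) will be applied with an additional displacement of magnitude at most $s$, controllable via a perturbation argument in the subsequent analysis.
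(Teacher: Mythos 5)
Your proof is correct and follows essentially the same route as the paper: the same expansion of $\frac{1}{2\eta}\|x-w\|^2$ with the $x$-independent terms absorbed into the normalization, and the same one-line identity $x_y - w = \eta\left(f'(x_y) + \frac{1}{\eta}(x_y - y)\right)$ combined with \eqref{eq:approx_stationary} to get $\|x_y - w\| \le s$. Your added remark that the equivalence is purely algebraic and needs no stationarity of $x_y$ is accurate and consistent with how the paper uses the lemma.
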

\begin{proof}
We first show the potentials of two distributions are the same up to a constant.
\begin{align}
g(x) + \frac{1}{2 \eta} \|x-w \|^2 
& = g(x) + \frac{1}{2 \eta} \|x- y + \eta f'(x_y) \|^2  \\
& = g(x) + \frac{1}{2 \eta} \|x- y \|^2 + 
 \< x,  f'(x_y) \>
-
\< y,  f'(x_y) \>
+ \frac{\eta}{2} \|f'(x_y)\|^2 \\
& =  g(x) + \frac{1}{2 \eta} \|x- y \|^2 + 
 \< x,  f'(x_y) \> + \text{constant} \\
 & =  f (x)+\frac{1}{2\eta}\|x-y\|^{2}+ \text{constant} .
\end{align}
We use $f(x) = g(x) + \< f'(x_y) , x \> $ in the last equality. By the definition of $g(x)$, we have that $g'(x_y) =0$, thus
\begin{align}
 \|x_y -w \| = \|x_y - y + \eta f'(x_y) \| = \eta \left \|f'(x_y) + \frac{1}{\eta} (x_y -y) \right \| \overset{\eqref{eq:approx_stationary}}{\le} s.
\end{align}
\end{proof}
\subsection{Complexity analysis}
We first investigate the complexity of the optimization algorithm to reach a small enough tolerance. Later, it turns out that the tolerance in \eqref{eq:stationary_error_tv} will be enough.

\begin{lemma}\label{lem:opt_comp}
    Assume $\eta 
    =
    {1}/({\TwiceConsteta 
    L_\alpha^{\frac{2}{\alpha+1}}d^{\frac{\alpha}{\alpha+1}}(1+ \log (1+ 12/\zeta )) 
    })
    $. 
    Let $x_y \in \mR^d$ be an approximate stationary point of $f_y^\eta$, i.e.,
    \begin{align}\label{eq:stationary_error_tv}
       \left \| f'(x_y) + \frac{1}{\eta } ( x_y - y) \right \|   \le
       \frac{d^{\frac{1}{2(1+\alpha)}} }{\RtConsteta L_\alpha^{ \frac{1}{1+\alpha}} \eta}. 
    \end{align}
Then, the iteration complexity to find $x_y$ by \citet[Algorithm 3]{Liang2022APA} is $\tilde \cO(1)$.
\end{lemma}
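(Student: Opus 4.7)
The plan is to invoke the convergence guarantee of \citet[Algorithm 3]{Liang2022APA} as a black box, since that algorithm is an accelerated first-order method designed precisely for producing an approximate stationary point of $f_y^\eta(x) := f(x) + \frac{1}{2\eta}\|x-y\|^2$ when $f$ obeys \eqref{eq:semi-smooth}. First I would check that our setup fits their hypotheses: $f_y^\eta$ is the sum of an $L_\alpha$-$\alpha$-semi-smooth function and a $\frac{1}{\eta}$-strongly convex quadratic, which is exactly the composite structure their scheme exploits. Their iterates therefore enjoy their stated convergence rate, in which the number of gradient queries needed to reach $\|\nabla f_y^\eta(x_y)\| \le \tau$ depends polynomially on the effective modulus $\eta L_\alpha^{2/(1+\alpha)} d^{\alpha/(1+\alpha)}$ and only logarithmically on $1/\tau$.

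Next I would substitute the prescribed step size: with $\eta = 1/(\TwiceConsteta L_\alpha^{2/(1+\alpha)} d^{\alpha/(1+\alpha)}(1+\log(1+12/\zeta)))$, the effective modulus equals $1/(\TwiceConsteta(1+\log(1+12/\zeta)))$, which is $O(1)$ up to a factor polylogarithmic in $1/\zeta$. The required gradient tolerance is $\tau = d^{1/(2(1+\alpha))}/(\RtConsteta L_\alpha^{1/(1+\alpha)} \eta)$, so $1/\tau$ is polynomial in $L_\alpha$, $d$, and $1/\eta$, and consequently $\log(1/\tau) = O(\log L_\alpha + \log d + \log(1/\zeta))$. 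Both the polynomial factor and the logarithmic factor are $\tilde O(1)$ in the problem parameters, so the total iteration complexity inherited from \citet[Algorithm 3]{Liang2022APA} is $\tilde O(1)$, as claimed.

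The main obstacle is to line up their normalization and rate constants with ours, and in particular to confirm two things: that their complexity is polynomial (not super-polynomial) in the effective modulus, and that the dependence on $\tau$ in the regime $\alpha \in (0,1)$ is genuinely logarithmic rather than of the form $\tau^{-\gamma}$ for some $\gamma>0$. If there is a mild polynomial tail $\tau^{-\gamma}$, the choice of $\tau$ still yields $\tilde O(1)$ overall because $1/\tau$ is polynomial in $L_\alpha$, $d$, and $1/\eta$; but if their rate hides a super-polynomial factor in the effective modulus, then the constant $\TwiceConsteta$ (which is twice $\Consteta$) would need to be revisited to absorb that excess, and this is where the bulk of the bookkeeping lies.
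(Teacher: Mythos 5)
Your overall strategy---treating \citet[Algorithm 3]{Liang2022APA} as a black box, checking that the condition number of $f_y^\eta$ is $O(1)$ under the prescribed $\eta$, and concluding a logarithmic iteration count---is the same skeleton as the paper's proof, and your observation that $\eta L_\alpha^{2/(1+\alpha)}d^{\alpha/(1+\alpha)} = O(1)$ up to the $\log(1+12/\zeta)$ factor is exactly the right reason why the leading factor $\frac{2\sqrt{L}+\sqrt{\beta}}{2\sqrt{\beta}}$ (with $\beta = 1/\eta - M$, $L = 1/\eta + M$, $M = L_\alpha^{2/(1+\alpha)}/(1+\alpha)^{(1-\alpha)/(1+\alpha)}$ from \citet[Lemma A.2]{Liang2022APA}) is $O(1)$.

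However, there is a genuine gap: you assume the dependence on the gradient tolerance $\tau$ is logarithmic (or at worst mildly polynomial) for \emph{every} $\tau>0$, but for $\alpha<1$ the potential $f_y^\eta$ is only weakly smooth, i.e., it satisfies the two-sided quadratic bounds of \citet[Lemma A.2]{Liang2022APA} only up to an additive slack $\theta = (1-\alpha)/2$. Because of this slack, the accelerated method has an irreducible accuracy floor: \citet[Lemma B.4]{Liang2022APA} yields the logarithmic bound only when the requested tolerance dominates $2\sqrt{2}(\beta+L)\theta/\sqrt{\beta}$, and below that floor no convergence guarantee is available at all (this is not a ``$\tau^{-\gamma}$ tail'' that a larger constant $\TwiceConsteta$ could absorb). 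The actual content of the paper's proof is the verification that the specific tolerance in \eqref{eq:stationary_error_tv} sits above this floor: with the prescribed $\eta$ one checks
\begin{align}
\frac{2\sqrt{2}(\beta+L)\theta}{\sqrt{\beta}}
= \frac{2\sqrt{2}(1-\alpha)}{\eta\sqrt{\tfrac{1}{\eta}-M}}
\le \frac{2\sqrt{2}(1-\alpha)}{L_\alpha^{\frac{1}{\alpha+1}}\eta\sqrt{\TwiceConsteta\, d^{\frac{\alpha}{\alpha+1}}(1+\log(1+12/\zeta))-1}}
\le \frac{d^{\frac{1}{2(1+\alpha)}}}{\RtConsteta L_\alpha^{\frac{1}{1+\alpha}}\eta},
\end{align}
which is precisely why the right-hand side of \eqref{eq:stationary_error_tv} has the stated form (and why it cannot be taken arbitrarily small). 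Without this check your invocation of the black-box rate is not justified; once you add it, the rest of your argument (condition number $O(1)$, $\log$ of the initial distance over $\rho^2$ being $\tilde\cO(1)$) goes through as in the paper.
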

\begin{proof}
According to \citet[Lemma A.2]{Liang2022APA}, $f_y^\eta := f(x) + \frac{1}{2 \eta} \|x-y\|^2 $ satisfies that
\begin{align}
\frac{\beta}{2}\|u-v\|^2-\theta \leq f_y^\eta (u)-f_y^\eta(v)-\left\langle (f_y^\eta)^{\prime}(v), u-v\right\rangle \leq \frac{L}{2}\|u-v\|^2+\theta, \quad \forall u, v \in \mathbb{R}^d,
\end{align}
with
\begin{align}
M = \frac{L_\alpha^{\frac{2}{1+\alpha}}}{ (1+\alpha)^{ \frac{1-\alpha}{1+\alpha} } } ,~~
 \beta = \frac{1}{\eta} - M,
 ~~  L = \frac{1}{\eta} + M,~~ \theta = \frac{1-\alpha }{2} .
\end{align}
Denote the upper bound in \eqref{eq:stationary_error_tv} as $\rho: = 
\frac{d^{\frac{1}{2(1+\alpha)}} }{\RtConsteta L_\alpha^{ \frac{1}{1+\alpha}} \eta}
$.
Since $(1+\alpha)^{ \frac{1-\alpha}{1+\alpha} } \ge 1$,
by simple calculation,
we can verify that
\begin{align}
2\sqrt{2} (\beta + L)\theta /\sqrt{\beta}  
=
 \frac{2 \sqrt{2} (1-\alpha) }{\eta \sqrt{ \frac{1}{\eta} - M } }
 \le 
  \frac{2 \sqrt{2} (1-\alpha) }{  L_\alpha^{\frac{1}{\alpha+1}} \eta \sqrt{ 
  \TwiceConsteta 
   d^{\frac{\alpha}{\alpha+1}}(1+ \log (1+ 12/\zeta )) 
  - 1 } } \le \rho.
\end{align}
Then by \citet[Lemma B.4]{Liang2022APA}, the number of iterations to obtain \eqref{eq:stationary_error_tv} is at most 
\begin{align}
\frac{2 \sqrt{L}+\sqrt{\beta}}{2 \sqrt{\beta}} \log \left(\frac{(\beta+L)^2 \|x^{(0)} - x^*\|^2}{\rho^2} \frac{2 \sqrt{L}+\sqrt{\beta}}{2 \sqrt{\beta}}+1\right) = \tilde\cO(1),   
\end{align}
where $x^{(0)}$ is the initialization point in the optimization algorithm and $x^*$ is a ground truth stationary point of $f_y^\eta$.
\end{proof}

Considering the error in solving the stationary point of $f_y^\eta$, we will need another concentration bound for the sampling complexity proof. 
Compared with Theorem \ref{thm:concentration}, this concentration inequality only has an additional coefficient $\exp \left(\frac{ s^2}{2\eta (d/\epsilon -1 ) } \right)$ with all other terms intact.

\begin{proposition}
\label{prop:errored_concentration}
Let $X \sim \cN(m_0,\eta\ide)$ be a Gaussian random variable in $\Rd$, and let $\ell$ be an $L_\alpha$-$\alpha$-semi-smooth function. Assume $ \ell'(m_1) = 0$, and $\|m_0 - m_1\| \le s $. Then for any $r>0, ~0 \le \alpha \le 1$, one has $\forall \epsilon \in (0,d)$,
\begin{align}
\Pr( \ell(X)-\mE(\ell(X))\geq r) 
\leq 
\exp \left(\frac{ s^2}{2\eta (d/\epsilon -1 ) } \right)
\left(1-\frac{\epsilon}{d} \right)^{-d/2} \exp\left(-\frac{C\epsilon^{\frac{\alpha}{1+\alpha}}r^{\frac{2}{1+\alpha}}}{L_\alpha^{\frac{2}{1+\alpha}}d^{\frac{\alpha}{1+\alpha}}\eta}\right),
\end{align}
where 
\begin{align}
C = (1+\alpha)\left(\frac{1}{\alpha}\right)^{\frac{\alpha}{1+\alpha}}\left(\frac{1}{\pi^2}\right)^{\frac{1}{1+\alpha}}2^{\frac{1-\alpha}{1+\alpha}}.
\end{align}
\end{proposition}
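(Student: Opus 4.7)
The plan is to reduce to the proof of Theorem \ref{thm:concentration} by carefully tracking the offset $\delta := m_0 - m_1$ through the Maurey--Pisier argument. Specifically, let $G, H \sim \cN(m_0, \eta \ide)$ be i.i.d., set $\bar G := G - m_0$, $\bar H := H - m_0$, and define the rotated interpolant $\bar G_\theta := \bar G \sin\theta + \bar H \cos\theta$ for $\theta \in [0, \pi/2]$, which is marginally $\cN(0, \eta \ide)$ with $\bar G_{\pi/2} = \bar G$ and $\bar G_0 = \bar H$. By Jensen's inequality, $\mE\exp(\lambda(\ell(G)-\mE\ell(G))) \le \mE\exp(\lambda(\ell(G) - \ell(H)))$, and applying the fundamental theorem of calculus to $\theta \mapsto \ell(\bar G_\theta + m_0)$, followed by the same Jensen/independence/rotational-invariance steps as in Theorem \ref{thm:concentration}, yields
\begin{align}
\mE\exp(\lambda(\ell(G)-\mE\ell(G))) \le \mE_{\bar G_\theta} \exp\!\left(\frac{\pi^2}{8}\eta\lambda^2 \|\nabla\ell(\bar G_\theta + m_0)\|^2\right).
\end{align}
Because $\ell'(m_1)=0$ and $\ell$ is $L_\alpha$-$\alpha$-semi-smooth, $\|\nabla\ell(\bar G_\theta + m_0)\|^2 \le L_\alpha^2\|\bar G_\theta + \delta\|^{2\alpha}$, so the shift is isolated into a single additive term inside the norm.

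The second step is to handle the new Gaussian MGF that arises once Young's inequality $\|y\|^{2\alpha} \le \alpha\|y\|^2/\omega + (1-\alpha)\omega^{\alpha/(1-\alpha)}$ is applied to $y = \bar G_\theta + \delta$. For $W \sim \cN(0, \eta \ide)$ and any $c < 1/(2\eta)$, diagonalizing so that $\delta$ lies along one coordinate axis and using the noncentral/central chi-square MGFs on the parallel/orthogonal components produces
\begin{align}
\mE \exp\!\left(c\|W + \delta\|^2\right) = (1-2c\eta)^{-d/2}\exp\!\left(\frac{c\|\delta\|^2}{1-2c\eta}\right).
\end{align}
Compared with the $\delta = 0$ computation used in Theorem \ref{thm:concentration}, the only new factor is the exponential on the right.

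Finally, the plan is to carry through the identical parameter selection as in the proof of Theorem \ref{thm:concentration}: the same $\hat\lambda$ and $\hat\omega$, and the substitution that forces the coefficient of $\|W\|^2$ inside the MGF to satisfy $2c\eta = \epsilon/d$. This reproduces the bound $(1-\epsilon/d)^{-d/2}\exp(-C\epsilon^{\alpha/(\alpha+1)}r^{2/(\alpha+1)}/(L_\alpha^{2/(\alpha+1)}d^{\alpha/(\alpha+1)}\eta))$ without change, and the extra contribution from the displayed shifted MGF evaluates to
\begin{align}
\exp\!\left(\frac{c\|\delta\|^2}{1-2c\eta}\right) \le \exp\!\left(\frac{\epsilon s^2}{2\eta(d-\epsilon)}\right) = \exp\!\left(\frac{s^2}{2\eta(d/\epsilon -1)}\right),
\end{align}
after using $\|\delta\| \le s$. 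The main point requiring care is the Maurey--Pisier rotation when the base Gaussians have nonzero mean: one must rotate on the centered parts $\bar G, \bar H$ and pass the translation $m_0$ inside $\ell$ so that the gradient bound is applied relative to the true stationary point $m_1$; once that is set up, the remaining algebra mirrors Theorem \ref{thm:concentration} verbatim.
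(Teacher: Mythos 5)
Your proposal is correct and takes essentially the same approach as the paper: translate so that the Gaussian is centered (equivalently, rotate on the centered copies $\bar G,\bar H$), observe that after the translation the stationary point of $\ell$ sits at the nonzero offset $\delta$, bound $\|\nabla \ell\|^2 \le L_\alpha^2\|\cdot+\delta\|^{2\alpha}$, and track the resulting noncentral Gaussian MGF, which contributes exactly the extra factor $\exp\bigl(c\|\delta\|^2/(1-2c\eta)\bigr)$ on top of Theorem~\ref{thm:concentration}'s calculation. With the same $\hat\lambda,\hat\omega$ and the identification $2c\eta=\epsilon/d$, this factor evaluates to $\exp\bigl(s^2/(2\eta(d/\epsilon-1))\bigr)$, matching the paper's argument verbatim up to the sign convention $\delta=m_0-m_1$ versus the paper's $\iota=m_1-m_0$.
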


\begin{proof}
With the same procedure in the proof of Theorem \ref{thm:concentration}, one can obtain 
\begin{align}
\Pr(\ell(X)-\mE(\ell(X))\geq r) 
& \leq  \inf_{\lambda >0} \frac{\mE_{Z}\exp(\frac{\pi^2}{8}\eta\lambda^2\|\nabla \ell(Z)\|_2^2)}{\exp(\lambda r)}.
\end{align}
where $Z \sim \cN(0,\eta\ide)$ and $\nabla \ell(m_1-m_0) = 0$. In what follows, we denote $m_1-m_0$ by $\iota$.
Again, we proceed by considering three cases.
\begin{enumerate}
\item $\alpha = 0$

Notice that $\|\nabla \ell(Z)\|_2^2 \leq L_0^2$. By taking $\lambda = \frac{4r}{\pi^2\eta L_0^2}$, one could get
\begin{align}
\Pr(\ell(X)-\mE(\ell(X))\geq r) \leq  \exp(- \frac{2 r^2}{\pi^2 \eta L_0^2}).
\end{align}
Note that for the case that $\alpha=0$, the result is the same as the one in Theorem \ref{thm:concentration}.
\item $\alpha = 1$

Assume $-\frac{1}{2\eta} + \frac{\pi^2}{8}L_1^2\eta\lambda^2 <0$, then
\begin{align}
\frac{\mE_{Z}\exp(\frac{\pi^2}{8}\lambda^2\eta\|\nabla \ell(Z)\|_2^2)}{\exp(\lambda r)} 
& \leq \frac{\mE_{Z}\exp(\frac{\pi^2}{8} L_1^2\eta\lambda^2\|Z-\iota\|_2^2)}{\exp(\lambda r)} \\
& = \left(\frac{1}{1-\frac{\pi^2}{4}L_1^2\eta^2\lambda^2}\right)^{d/2}\exp\left(\frac{\nicefrac{1}{2\eta}}{\frac{4}{\pi^2L_1^2\eta^2\lambda^2}-1}\|
\iota\|^2_2\right)\exp(-\lambda r).
\end{align}
Let $\lambda = \frac{k}{L_1^2d\eta^2}$ and denote $\frac{\pi^2k^2}{4\eta^2L_1^2d}$ by $\epsilon$. Then,
\begin{align}
\Pr(f(X)-\mE(f(X))\geq r) \leq  (1-\frac{\epsilon}{d})^{-\frac{d}{2}}\exp\left(\frac{\nicefrac{1}{2\eta}}{\nicefrac{d}{\epsilon}-1}\|
\iota\|^2_2\right)\exp(-\sqrt{\frac{4\epsilon}{\pi^2}}\frac{r}{\eta L_1d^{1/2}}), \: \forall \epsilon \in (0,d)
\end{align}
Note that given the value of $\epsilon$, $(1-\frac{\epsilon}{d})^{-\frac{d}{2}}$ is bounded for $d$. One can also observe that compared with the result in Theorem \ref{thm:concentration}, the only difference is the additional term $\exp\left(\frac{\nicefrac{1}{2\eta}}{\nicefrac{d}{\epsilon}-1}\|
\iota\|^2_2\right)$.

\item $0 <\alpha <1$

By Young's inequality, for any $\omega>0$, one obtains
$\|Z-\iota\|^{2\alpha}\omega \leq \alpha\|Z-\iota\|^2 + (1-\alpha)\omega^{\frac{1}{1-\alpha}} $. Hence, with the assumption $1-\frac{\pi^2}{4}L_{\alpha}^2\eta^2\lambda^2\frac{\alpha}{\omega}>0$,
\begin{align}
& \frac{\mE_{Z}\exp(\frac{\pi^2}{8}\eta\lambda^2\|\nabla \ell(Z)\|_2^2)}{\exp(\lambda r)} 
\leq \frac{\mE_{Z}\exp(\frac{\pi^2}{8} L_{\alpha}^2\eta\lambda^2\|Z-\iota\|_2^{2\alpha})}{\exp(\lambda r)} \\
 \leq & (1-\frac{\pi^2}{4}\eta^2\lambda^2L_\alpha^2\frac{\alpha}{\omega})^{-\frac{d}{2}}
 \exp\left(\frac{\nicefrac{1}{2\eta} \|
\iota\|^2_2 }{\nicefrac{1}{\left(\frac{\pi^2}{4}\eta^2\lambda^2L_\alpha^2\frac{\alpha}{\omega}\right)}-1}\right)
\exp(\frac{\pi^2}{8}\eta\lambda^2L_\alpha^2(1-\alpha)\omega^{\frac{\alpha}{1-\alpha}})\exp(-\lambda r). \label{eq:young_split_appro}
\end{align}
One can notice that compared with the case $\alpha \in (0,1)$ in the proof of Theorem \ref{thm:concentration}, the only additional term is $\exp\left(\frac{\nicefrac{1}{2\eta}}{\nicefrac{1}{\left(\frac{\pi^2}{4}\eta^2\lambda^2L_\alpha^2\frac{\alpha}{\omega}\right)}-1}\|
\iota\|^2_2\right)$. With the same suboptimal  $\hat{\lambda}$ and $\hat{\omega}$ as in the proof of Theorem \ref{thm:concentration}, we have $\forall \epsilon \in (0,d)$,
\begin{equation}
    \Pr(\ell(X)-\mE(\ell(X))\geq r) 
    \leq \left(1-\frac{\epsilon}{d} \right)^{-d/2}\exp\left(\frac{\nicefrac{1}{2\eta}}{\nicefrac{d}{\epsilon}-1}\|
\iota\|^2_2\right)\exp\left(-\frac{C\epsilon^{\frac{\alpha}{\alpha+1}}r^{\frac{2}{\alpha+1}}}{L_\alpha^{\frac{2}{\alpha+1}}d^{\frac{\alpha}{\alpha+1}}\eta}\right),
\end{equation}
with $C$ being $(1+\alpha)\left(\frac{1}{\alpha}\right)^{\frac{\alpha}{1+\alpha}}\left(\frac{1}{\pi^2}\right)^{\frac{1}{1+\alpha}}2^{\frac{1-\alpha}{1+\alpha}}$.

\end{enumerate}

\end{proof}

Following the same procedures in Section \ref{sec:exact_rgo}, with the concentration inequality at our disposal, we can probabilistically bound the difference of $\rho$ and $\bar \rho$, which determines the discrepancy between $\hat \pi^{X|Y} $ and $\pi^{X|Y}$.

 \begin{theorem}[RGO complexity in total variation]
 \label{thm:rgo_tv_appro}
If the step size $$\eta \le \frac{1}{\TwiceConsteta  L_\alpha^{\frac{2}{\alpha+1}}d^{\frac{\alpha}{\alpha+1}}(1+ \log (1+ 12/\zeta )) } ,$$ 
then for any $\zeta>0$, Algorithm \ref{alg:rgo_appro} returns a random point $x$ that has $\zeta$ total variation distance to the distribution proportional to $\pi^{X|Y}(\cdot | y)$.
Furthermore, if $~0 < \zeta < 1$, then the algorithm access $\cO(1)$ many $f(x)$ in expectation.
 \end{theorem}
\begin{proof}
Following the same proof as Theorem \ref{thm:rgo}, we obtain 
\begin{align}
 \| \pi^{X|Y} - \hat \pi^{X|Y} \|_\TV \le  2 \sum_{i=1}^\infty \exp( (i+1)\log 2) \Pr(\bar \Delta \ge i) .
\end{align}
Note that $g(x) = f(x) - \< \nabla f(x_y), x\>$ satisfies that $\nabla g(x_y) =0 $, and 
$\mE[x] = w$. Moreover, $g(x)$ is also $L_\alpha$-$\alpha$-semi-smooth because $\nabla g(x_1) - \nabla g(x_2) = \nabla f(x_1) - \nabla f(x_2)  $ for any $x_1,x_2$.
We also have the inequality $(1-0.5/a)^{-a/2} \le 1.5$ and $0< 1/(2a-1) \le 1/a $   when $a \ge 1$.
Since \eqref{eq:stationary_error_tv} in Lemma \ref{lem:opt_comp} is satisfied, further by Lemma \ref{lem:non_exact_equi_rgo}, we have 
\begin{align}\label{eq:real_s_tv}
\|x_y - w\| \le s := \frac{d^{\frac{1}{2(1+\alpha)}} }{\RtConsteta L_\alpha^{ \frac{1}{1+\alpha}} } .    
\end{align}
Thus, we plug in $\epsilon = 0.5 $ in Proposition \ref{prop:errored_concentration}, and obtain that, for $\forall r >0$,
\begin{align}\label{eq:const_concentrate_appro}
\Pr[g(x) - \mE g(x) \ge r ] \le \frac{3}{2} \exp \left( \frac{s^2 }{ 2 d \eta}  - \frac{
\Cepshalf
r^{\frac{2}{1+\alpha}}
 }{
 L_\alpha^{\frac{2}{\alpha+1}}d^{\frac{\alpha}{\alpha+1}}\eta
 } 
 \right) .
\end{align}
This further implies that 
\begin{align}
\Pr[g(z) - g(x) \ge r ] 
& = \Pr[g(z) - \mE g(z) + \mE g(x) -  g(x) \ge r ] \\
& \le \Pr \left[g(z) - \mE g(z) \ge \frac{r}{2} \right]+ \Pr \left[\mE g(x) -  g(x) \ge \frac{r}{2} \right] \\
& = \Pr \left[g(z) - \mE g(z) \ge \frac{r}{2} \right]+ \Pr \left[ -  g(x) - \mE [-g(x)] \ge \frac{r}{2} \right] \\
& \le 3 \exp \left( - \left( \frac{
\Cepshalf
\left(\frac{r}{2}\right)^{\frac{2}{1+\alpha}}
 }{
 L_\alpha^{\frac{2}{\alpha+1}}d^{\frac{\alpha}{\alpha+1}} 
 } - \frac{s^2}{2 d} \right) \frac{1}{\eta}
 \right) .
\end{align}
Thus 
$
\Pr[\bar \Delta \ge i ] \le 
3 \exp \left( - \left( \frac{
\Cepshalf
\left(\frac{i \log 2}{2}\right)^{\frac{2}{1+\alpha}}
 }{
 L_\alpha^{\frac{2}{\alpha+1}}d^{\frac{\alpha}{\alpha+1}} 
 } - \frac{s^2}{2 d } \right) \frac{1}{\eta}
 \right) . $
Denote $$C_\eta : =  \left( 
\frac{
\Ceta
}{
  L_\alpha^{\frac{2}{\alpha+1}}d^{\frac{\alpha}{\alpha+1}}
 } - \frac{s^2}{2 d} \right) \frac{1}{\eta} -1 . $$ 
  Since $\log 2 < 1$ and  $i^{\frac{2}{1+\alpha}} \ge i$ for any $i \ge 1$, $0 \le \alpha \le 1$, we have that 
 \begin{align}
& \sum_{i=1}^\infty \exp( (i+1)\log 2) \Pr(\bar \Delta \ge i) \\
\le & 6 \sum_{i=1}^\infty 
\exp \left( i - \left( \frac{
\Cepshalf
\left(\frac{i \log 2}{2}\right)^{\frac{2}{1+\alpha}}
 }{
 L_\alpha^{\frac{2}{\alpha+1}}d^{\frac{\alpha}{\alpha+1}}
 } - \frac{s^2}{2 d } \right) \frac{1}{\eta}
 \right) \\
 \le  & 6 \sum_{i=1}^\infty 
\exp \left(  - C_\eta
 i \right)  = \frac{6}{\exp(C_\eta) -1 } \le \frac{\zeta }{2} \label{eq:eq:bound_E_V_bar_V_appro}
 \end{align}
 by the choice 
 $$\eta \le \left( \frac{
\Ceta
}{
  L_\alpha^{\frac{2}{\alpha+1}}d^{\frac{\alpha}{\alpha+1}} }
 - \frac{s^2}{2 d }
 \right)
 \frac{1}{(1+ \log (1+ 12/\zeta ))} . $$
 Since the bound \eqref{eq:alpha_bound} and the value of $s
$ in \eqref{eq:real_s_tv}, 
the bound
$$\eta \le \frac{1}{\TwiceConsteta  L_\alpha^{\frac{2}{\alpha+1}}d^{\frac{\alpha}{\alpha+1}}(1+ \log (1+ 12/\zeta )) } $$ suffices.
This finishes the proof.

Finally, by Lemma \ref{lem:dist_relation},
the acceptance probability is $\frac{1}{2} \mE[\overline V] $. Since $\zeta< 1 $, we have 
\begin{align}
 \mE[\overline V] \ge \mE[ V] -  \mE|V - \overline V| \ge  1- \frac{\zeta}{2} \ge \frac{1}{2} .   
\end{align}
Thus, the expected number of iterations for the rejection sampling step is $\cO(1)$.
\end{proof}

 \begin{theorem}[RGO complexity in Wasserstein distance] \label{thm:rgo_w2_appro}
If the step size 
\begin{align}
\eta \le \min \left(  \frac{1}{\TwiceConsteta L_\alpha^{\frac{2}{\alpha+1}}d^{\frac{\alpha}{\alpha+1}}(2+ \log( 1+ {192  (d^2 + 2d)}/{ \zeta^4} )) } , 1\right)  ,  
\end{align}
then Algorithm \ref{alg:rgo_appro} returns a random point $x$ that has $\zeta$ Wasserstein-2 distance to the distribution $\pi^{X|Y}(\cdot | y)$.  Furthermore, if 
 ~$0< \zeta < 2\sqrt{2d} $, then the algorithm access $\cO(1)$ many $f(x)$ in expectation.
 \end{theorem}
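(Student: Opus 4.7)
The proof mirrors Theorem \ref{thm:rgo_w2} but substitutes the approximate-minimizer Gaussian concentration inequality from Proposition \ref{prop:errored_concentration} in place of Theorem \ref{thm:concentration}. By Lemma \ref{lem:non_exact_equi_rgo}, the proposal $\phi = \cN(w,\eta\ide)$ has mean $w$ with $\|x_y - w\| \le s$ for the $s$ in \eqref{eq:real_s_tv}, while $g'(x_y)=0$; the representation of $\hat\pi^{X|Y}$ versus $\pi^{X|Y}$ in Lemma \ref{lem:dist_relation} still applies, so the structure of the proof of Theorem \ref{thm:rgo_w2} carries over with only the concentration step modified. As in that proof, I shall establish the stronger ``squared'' claim that, under the step-size bound with $\zeta^4$ replaced by $\zeta^2$ in the logarithm, Algorithm \ref{alg:rgo_appro} satisfies $W_2^2(\pi^{X|Y},\hat\pi^{X|Y}) \le \zeta$; the theorem then follows by renaming $\zeta^2 \mapsto \zeta$, which also converts the internal threshold $\zeta<8d$ into the stated $\zeta < 2\sqrt{2d}$.

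\textbf{Steps.} Starting from Lemma \ref{lem:tv_w2} with center $m = w$, combined with Lemma \ref{lem:dist_relation} and $\mE V \ge 1$ from the unchanged Cauchy--Schwarz bound \eqref{eq:bound_V}, the same algebra as in the proof of Theorem \ref{thm:rgo_w2} produces
\[
W_2^2(\pi^{X|Y},\hat\pi^{X|Y}) \le 2\,\mE\bigl[\|x-w\|^2 |V-\overline V|\bigr] + \frac{2\,\mE|V-\overline V|\cdot\mE[\|x-w\|^2\overline V]}{\mE\overline V}.
\]
Since $x\sim\cN(w,\eta\ide)$ and $\eta\le 1$, the moment computations $\mE\|x-w\|^4 \le d^2+2d$ and $\mE[\|x-w\|^2\overline V] \le 2\eta d$ from \eqref{eq:4_moment} and \eqref{eq:weighted_bar_V} carry over verbatim. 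For the key probabilistic estimate, applying Proposition \ref{prop:errored_concentration} to $g$ with $m_0 = w$, $m_1 = x_y$, and $\epsilon = 1/2$ gives
\[
\Pr[\,g(x)-\mE g(x) \ge r\,] \le \frac{3}{2}\exp\!\left(\frac{s^2}{2\eta(2d-1)} - \frac{\Cepshalf\, r^{2/(1+\alpha)}}{L_\alpha^{2/(1+\alpha)} d^{\alpha/(1+\alpha)}\,\eta}\right),
\]
the errored analog of \eqref{eq:const_concentrate}. Proceeding as in the proof of Theorem \ref{thm:rgo_w2}, I bound $\mE|V-\overline V|$ and $\mE|V-\overline V|^2$ by the geometric sums $\sum_{i\ge 1} e^{(i+1)\log 2}\Pr(\bar\Delta\ge i)$ and $\sum_{i\ge 1} e^{2(i+1)\log 2}\Pr(\bar\Delta\ge i)$ respectively, with $\bar\Delta = (g(z)-g(x))/\log 2$; each sum collapses provided the effective exponent $\bigl(\Ceta L_\alpha^{-2/(\alpha+1)} d^{-\alpha/(\alpha+1)} - s^2/(2d)\bigr)/\eta$ exceeds the corresponding log-plus-constant. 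The choice \eqref{eq:real_s_tv} yields $s^2/(2d) = 1/(\TwiceConsteta\, L_\alpha^{2/(1+\alpha)} d^{\alpha/(1+\alpha)})$, and combined with $\Ceta \ge 2/\TwiceConsteta$ (a consequence of \eqref{eq:alpha_bound}) the effective constant satisfies $\Ceta - 1/\TwiceConsteta \ge 1/\TwiceConsteta$, which is precisely why doubling the base constant from $\Consteta$ to $\TwiceConsteta$ suffices. Assembling $\mE|V-\overline V| \le \zeta/(16d)$ and $\mE|V-\overline V|^2 \le \zeta^2/(16(d^2+2d))$ via the Cauchy--Schwarz step \eqref{eq:w2_1st_term_cs} and the two-term decomposition above yields $W_2^2 \le \zeta$.

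\textbf{Main obstacle.} The delicate point is tracking the additive contamination $s^2/(2\eta(2d-1))$ inside the concentration exponent and verifying it can be absorbed without altering the functional form of the step-size bound. This works because the tolerance $s$ in \eqref{eq:real_s_tv} (which is also what Lemma \ref{lem:opt_comp} uses to keep the Nesterov inner loop at $\tilde\cO(1)$ iterations) is chosen so that $s^2/(2d)$ and the Maurey--Pisier constant $\Cepshalf/(L_\alpha^{2/(\alpha+1)} d^{\alpha/(\alpha+1)})$ share identical dimension and smoothness scaling, so the contamination costs at most a factor of two, yielding $\TwiceConsteta = 2\Consteta$ in the final bound. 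The acceptance-probability claim $\frac{1}{2}\mE\overline V \ge 1/4$ for $0<\zeta<2\sqrt{2d}$ then follows immediately from $\mE|V-\overline V| \le 1/2$ (the squared bound in the regime $\zeta_{\mathrm{int}} \le 8d$) together with $\mE V \ge 1$, exactly as at the end of the proof of Theorem \ref{thm:rgo_w2}.
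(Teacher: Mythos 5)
Your proposal is correct and follows essentially the same route as the paper's own proof: reduce to the squared-$W_2$ claim with $\zeta^2$ in the logarithm, reuse the two-term decomposition, moment bounds, and geometric-sum estimates from Theorem \ref{thm:rgo_w2}, and replace the concentration step by Proposition \ref{prop:errored_concentration} with $m_0=w$, $m_1=x_y$, absorbing the contamination $s^2/(2d)$ from \eqref{eq:real_s_tv} by the doubling $\Consteta\mapsto\TwiceConsteta$. The only cosmetic difference is that you keep the penalty denominator $2\eta(2d-1)$ where the paper upper-bounds it by $2d\eta$; the absorption argument is otherwise identical.
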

 \begin{proof}
 It suffices to prove the claim:
\textit{If the step size 
\begin{align}\label{eq:w2_eta_appro}
\eta \le \min \left(  \frac{1}{\TwiceConsteta L_\alpha^{\frac{2}{\alpha+1}}d^{\frac{\alpha}{\alpha+1}}(2+ \log( 1+ {192  (d^2 + 2d)}/{ \zeta^2} )) } , 1\right)   ,  
\end{align}
then Algorithm \ref{alg:rgo} returns a random point $x$ that has $\zeta$ \textbf{squared}  Wasserstein-2 distance to the distribution $\pi^{X|Y}$.  Furthermore, if ~$0< \zeta <8d $, then the algorithm access $\cO(1)$ many $f(x)$ in expectation.}
 
  By Lemma \ref{lem:tv_w2} (\citet[Proposition 7.10]{villani2021topics}) and triangular inequality, 
    \begin{align}
W_2^2( \pi^{X|Y} , \hat \pi^{X|Y} ) & \le 2 \left\| \| \cdot - w \|_2^2 (\pi^{X|Y} - \hat \pi^{X|Y} ) \right\|_\TV 
= 2 \mE \left( \|x - w \|_2^2 \left| \frac{V}{\mE V} - \frac{\overline V}{\mE \overline V} \right| \right) \\
& \le 2  \mE \|x - w \|_2^2 \left( \left| \frac{V}{ \mE V } - \frac{\overline V}{ \mE  V}  \right| +  \left| \frac{\overline V}{ \mE  V } - \frac{\overline  V}{ \mE \overline V}  \right| \right)  \\
&\le \frac{ 2 \mE [ \|x - w\|^2 |V - \overline V| ] }{ \mE V } 
+  \frac{ 2 \mE [\|x- w\|^2 \overline V |\mE [V - \overline V]|]}{ \mE V  \mE  \overline V } \\
& \overset{\eqref{eq:bound_V}}{\le} 2 \mE [ \|x - w\|^2 |V - \overline V| ] 
+  \frac{ 2 \mE |V - \overline V| \cdot \mE [\|x- w\|^2 \overline V ]}{  \mE  \overline V } . 
    \end{align}
Firstly, by Cauchy-Schwartz inequality,
\begin{align}
 \mE [ \|x - w\|^2 |V - \overline V| ] \le \left( \mE  \|x - w\|^4 \mE |V - \overline V|^2  \right)^\frac{1}{2} .
\end{align}
Similar to \eqref{eq:4_moment} and \eqref{eq:weighted_bar_V}, we have 
\begin{align}\label{eq:w_part1_appro}
    \mE  \|x - w\|^4 \le    d^2 + 2d  ~~\text{and}~~ \mE [\|x- w\|^2 \overline V ] \le 2 \eta d.
\end{align}
On the other hand, following the same logic of bounding $\mE [V - \overline V]$ in Theorem \ref{thm:rgo},
\begin{align} \label{eq:v_diff_squ_appro}
\mE |V - \overline V|^2 
\le \sum_{i=1}^\infty \exp( 2 (i+1)\log 2) \Pr(\bar \Delta \ge i) .
\end{align}
Note that the $s$ value in \eqref{eq:real_s_tv} still holds.
By the concentration \eqref{eq:const_concentrate_appro}, and the bound \eqref{eq:alpha_bound},
we have 
\begin{align}
  &  \sum_{i=1}^\infty \exp( 2 (i+1)\log 2) \Pr(\bar \Delta \ge i) \\
   \overset{\eqref{eq:const_concentrate_appro}}{\le}  & 12 \sum_{i=1}^\infty 
\exp \left( 2 i - \left(\frac{
\Cepshalf
\left(\frac{i \log 2}{2}\right)^{\frac{2}{1+\alpha}}
 }{
 L_\alpha^{\frac{2}{\alpha+1}}d^{\frac{\alpha}{\alpha+1}} 
 } - \frac{s^2}{ 2d } \right) \frac{1}{\eta}
 \right)
 \\
 \le & 12 \sum_{i=1}^\infty  \exp\left( 2 i - \left(\frac{ i
 }{\Consteta
 L_\alpha^{\frac{2}{\alpha+1}}d^{\frac{\alpha}{\alpha+1}} 
 } - \frac{s^2}{ 2d } \right) \frac{1}{\eta}
 \right) 
 \le  12 \exp  \left( 2 i - \left(\frac{ 1
 }{\Consteta
 L_\alpha^{\frac{2}{\alpha+1}}d^{\frac{\alpha}{\alpha+1}} 
 } - \frac{s^2}{ 2d } \right) \frac{i}{\eta}
 \right) \\
=&   \frac{12}{\exp \left(\left(\frac{ 1
 }{\Consteta
 L_\alpha^{\frac{2}{\alpha+1}}d^{\frac{\alpha}{\alpha+1}} 
 } - \frac{s^2}{ 2d } \right) / \eta -2  \right) - 1 }\overset{\eqref{eq:real_s_tv}}{=}  \frac{12}{\exp 
 \left(\frac{ 1
 }{\TwiceConsteta 
 L_\alpha^{\frac{2}{\alpha+1}}d^{\frac{\alpha}{\alpha+1}} 
 \eta } 
  -2  \right) - 1 } 
 \overset{\eqref{eq:w2_eta_appro}}{\le}  \frac{\zeta^2}{ 
 16 (d^2 + 2d) }  .
\end{align}
This gives $  \mE [ \|x - w \|^2 |V - \overline V| ]  \le \frac{\zeta}{4} . $ Next, following the same steps to bound \eqref{eq:eq:bound_E_V_bar_V_appro} and \eqref{eq:low_bound_bar_V}, we obtain 
\begin{align}
\label{eq:V_and_difference_approx}
    \mE |V - \overline V| \le
    \frac{\zeta}{ 16 d}  ~~\text{and}~~
    \mE[ \overline V] \ge \frac{1}{2}.
\end{align}
The above three inequalities, together with \eqref{eq:w_part1_appro},   gives that $W_2^2( \pi^{X|Y} , \hat \pi^{X|Y}  ) \le \zeta .$ 

Finally, because of the choice $\zeta< 8d $, and the bound \eqref{eq:V_and_difference_approx}, we have  $  \mE |V - \overline V| 
\le
    \frac{1}{ 2}  $.
Thus, the acceptance probability $\frac{1}{2} \mE[\overline V]
 \ge \frac{1}{2}( \mE[ V] -  \mE|V - \overline V| )
\ge \frac{1}{4} $.  So the expectation of iterations in rejection sampling follows as $\cO(1)$.
 \end{proof}

These two theorems show that the step size will have the same order as in Section \ref{sec:exact_rgo}. Thus all the results in Section \ref{sec:convergence} also follow if the RGO is realized by Algorithm \ref{alg:rgo_appro}.

\section{Extension to the convergence in $\chi^2$-divergence}

We now extend our results in \S\ref{sec:exact_rgo}, \S\ref{sec:convergence}, \S\ref{sec:approximate_opt} to the strong notion of $\chi^2$-divergence. We do not modify the the concentration inequalities nor the RGO algorithm, but only the proof of RGO step size. The new RGO results in $\chi^2$ with both accurate and inaccurate optimization step are shown in \S\ref{sec:rgo_chi}. Then in \S \ref{sec:convegence_chi},  we combine our RGO result with the techniques in \cite{Altschuler2023} to get the final convergence results in $\chi^2$.

\subsection{RGO for semi-smooth potential} \label{sec:rgo_chi}

We first consider we can solve optimization step \ref{step:opt} accurately in Algorithm \ref{alg:rgo}.
This theorem is more general than Theorem \ref{thm:rgo} and \ref{thm:rgo_w2}, which are presented under the weaker metric TV or $W_2$.
\begin{theorem}[RGO complexity in $\chi^2$-divergence with accurate optimization step \ref{step:opt}]\label{thm:rgo_chi}
Assume $f$ satisfies \eqref{eq:semi-smooth}. For $\forall \zeta>0$,
if
$$\eta \le 
\left(49  L_\alpha^{\frac{2}{\alpha+1}}d^{\frac{\alpha}{\alpha+1}}(2+ \log (1+ 522/\zeta )) \right)^{-1}
,$$
then Algorithm \ref{alg:rgo} returns a random point $x$ that has $\zeta ~~\chi^2$-divergence  to the distribution proportional to $
\pi^{X|Y}(\cdot | y)
$. Furthermore, if $~0 < \zeta < 0.5 $, then the algorithm access $\cO(1)$ many $f(x)$ in expectation.
 \end{theorem}
\begin{proof}
Recall that the return of Algorithm \ref{alg:rgo} follows the distribution $ \hat \pi^{X|Y}$, and the target distribution of RGO is $\pi^{X|Y} \propto \exp(-g(x) - \frac{1}{2 \eta} \|x - x_y\|^2 ) $.
According to Lemma \ref{lem:dist_relation}, we have 
\begin{align}
\chi_{\pi^{X|Y}  }^2( \hat \pi^{X|Y} ) 
& = \frac{\mE[\overline V^2 V^{-1}] \mE[V]}{(\mE \overline V)^2} -1  
= \frac{\mE[(V-\overline V)^2 V^{-1}] \mE[V] - (\mE[V-\overline V] )^2 }{(\mE \overline V)^2 } \\
& \le \frac{\mE[(V-\overline V)^2 V^{-1}] \mE[V]}{(\mE \overline V)^2}
\end{align}
We first bound the two expectations in the numerator.
Denote $\Delta = g(z) - g(x)$.
Following \eqref{eq:exp_grad} in the proof of concentration inequality in Theorem \ref{thm:concentration}, we have 
\begin{align}
 \mE[V] 
 & = \mE_{x,z}[\rho]  = \mE_{x,z}[\exp(\Delta)]
 \le \mE_x[ \exp( \pi^2 \eta \|\nabla g(x)\|^2 /8 )] \\
& \overset{\eqref{eq:young_split}}{\le} 
\left(1- \frac{ \pi^2 L_\alpha^2 \eta^2 \alpha}{ 4w}\right)^{-\frac{d}{2}} \exp \left( \frac{ \pi^2}{8} L_\alpha^2 \eta(1-\alpha) w^{\frac{\alpha}{1-\alpha}} \right) 
\end{align}
for $\forall w>0$. By choosing $w = (1-\alpha)^{-\frac{1-\alpha}{\alpha}} (\eta d)^{1-\alpha}  $, the above term becomes
\begin{align}
 \left(1- \frac{ \pi^2 L_\alpha^2 \eta^{1+\alpha} \alpha (1-\alpha)^{ \frac{1-\alpha}{\alpha} } }{ 4 d^{1-\alpha}}\right)^{-\frac{d}{2}} 
 \exp \left( \frac{ \pi^2}{8} L_\alpha^2 \eta^{1 + \alpha} d^\alpha \right) .   
\end{align}
We further choose $\eta \le  \frac{1 }{2 L_\alpha^\frac{2}{1+\alpha} d^\frac{\alpha}{1+\alpha} } \le \left( \frac{8 \log(1.2) }{\pi^2 L_\alpha^2 d^\alpha } \right)^\frac{1}{1+\alpha} $, so the above term is smaller than
\begin{align}
1.2 \left(1- \frac{  2 \alpha (1-\alpha)^{ \frac{1-\alpha}{\alpha} } \log(1.2) }{ d }\right)^{-\frac{d}{2}} 
\le  1.2 \exp( 2 \alpha (1-\alpha)^{ \frac{1-\alpha}{\alpha} } \log(1.2) ) \le 1.2 \times 1.5 = 2.
\end{align}
We use the inequality $(1/(1-x))^{1/x} \le \exp(2)$ if $0 \le x \le 0.5$ above. So we obtain $\mE[V] \le 2$.

When $V=\mE[\rho|x]$ is small, $\rho$ is likely to be small and thus $V-\overline V = \mE [(\rho-2) \one_{\rho>2} ]$ is also small.
We bound the term $\mE[(V-\overline V)^2 V^{-1}] $ by splitting it into 
\begin{align}
\mE \frac{(V-\overline V)^2}{V} 
\le \mE \frac{(V-\overline V)^2}{V} \one_{V \le \frac{1}{e} } +  \mE \frac{(V-\overline V)^2}{V} \one_{V > \frac{1}{e}}
\le \mE \frac{(V-\overline V)^2}{V} \one_{V \le \frac{1}{e} } +  e \mE (V-\overline V)^2 .
\end{align}
Following the same logic of \eqref{eq:v_diff_squ} in the proof of Theorem \ref{thm:rgo_w2}, we can get that if $$\eta \le \frac{
1
}{
  49 L_\alpha^{\frac{2}{\alpha+1}}d^{\frac{\alpha}{\alpha+1}}
(2+ \log( 1+ \frac{522 }{ \zeta } )) }, $$
then $e\mE (V - \overline V)^2 \le \zeta/(16 )  . $ 
We then notice that 
when $V \le 1/e$, by the definition of $V$ and Jensen inequality, it holds that $g(x) - \mE [g] \ge 1. $ 
To bound the term $\mE \frac{(V-\overline V)^2}{V} \one_{V \le \frac{1}{e} }$, we write
\begin{align}
\mE \frac{(V-\overline V)^2}{V} \one_{V \le \frac{1}{e} }
\le \frac{1}{e} \mE \left(\frac{V-\overline V}{V}\right)^2 \one_{V \le \frac{1}{e} } 
\le \frac{1}{e} \mE \left(\frac{V-\overline V}{V}\right)^2 \one_{g(x) - \mE[g] \ge 1}.
\end{align}
Moreover, since $g(x) -\mE[g]$ 
is always no less than 1, with $c= 0.2 $, we have 
\begin{align}\label{eq:cond_conce}
\Pr (\Delta
> r |x) 
\le 1.5
\exp \left(-c \frac{(g(x) -\mE[g] + r)^\frac{2}{\alpha + 1} } {L_\alpha^{ \frac{2}{\alpha+1} } d^{\frac{\alpha}{\alpha+1}} \eta}  \right)  
\le  1.5
\exp \left(-c \frac{g(x) -\mE[g] + r } {L_\alpha^{ \frac{2}{\alpha+1} } d^{\frac{\alpha}{\alpha+1}} \eta}  \right)  
\end{align}
for $\forall r > 0 $.

Denote $\bar \Delta = \Delta / \log 2 =  (g(z) - g(x) )/\log 2$, and $S = L_\alpha^{ \frac{2}{\alpha+1} } d^{\frac{\alpha}{\alpha+1}} \eta $. We can write 
\begin{align}
 \frac{V-\overline V}{V}  = & \frac{\mE[(\rho-2) \one_{\rho > 2} |x ]}{ \mE[\exp(\Delta) |x ] } 
\le \frac{\mE[ \exp(\Delta) \one_{\rho > 2} |x ]}{ \mE[\exp(\Delta) |x ] } =  \frac{1}{  \mE[\exp(\Delta) \one_{\rho \le 2} |x ] /  \mE[\exp(\Delta) \one_{\rho > 2} |x ]  +1}    \\
\le & \frac{\mE[\exp(\Delta) \one_{\rho > 2} |x ] }{ \mE[\exp(\Delta) \one_{\rho \le 2} |x ]} 
\le \frac{2 \sum_{i=1}^\infty \exp(i \log 2) \Pr (i \le \bar \Delta \le i+1 | x) }{ \exp( \frac{-1 + \mE[g] - g(x)}{\log 2} )  \Pr ( \frac{-1 + \mE[g] - g(x)}{\log 2} \le \bar \Delta \le 1 | x) } \\
\le &  \frac{2 \sum_{i=1}^\infty \exp(i \log 2) \Pr ( \bar \Delta \ge i | x) }{ \exp( \frac{-1 + \mE[g] - g(x)}{\log 2} )  [1- \Pr (  \Delta > \log 2 | x) - \Pr (  g(z) - \mE[g] <-1 ) ] }  \label{eq:v_frac} \\
\overset{ \eqref{eq:cond_conce} }{\le} & \frac{ 3 \sum_{i=1}^\infty \exp(i \log 2) \exp( - c \frac{g(x) - \mE[g] + i\log 2
}{S} ) \exp( \frac{g(x) - \mE[g] }{\log 2} ) }{ \exp(- \frac{1}{\log 2}) (1- 1.5 [ \exp(- c \frac{g(x) - \mE[g] + \log 2
}{S}  ) + \exp(- \frac{c}{S} )]) } \\
\le & \frac{ 15 \sum_{i=1}^\infty \exp(i (\log 2- \frac{c \log2
}{S}) ) \exp( - (\frac{c}{S} - \frac{1}{\log 2}) (g(x) - \mE[g] )  )  }{ 1- 1.5 [ \exp(- c \frac{g(x) - \mE[g] + \log 2
}{S}  ) + \exp(- \frac{c}{S} )] } \\
= & \frac{ 15  \exp( - (\frac{c}{S} - \frac{1}{\log 2}) (g(x) - \mE[g] )  ) / ({\exp( \frac{c \log2
}{S} - \log 2 ) - 1}) }{ 1- 1.5 [ \exp(- c \frac{g(x) - \mE[g] + \log 2
}{S}  ) + \exp(- \frac{c}{S} )] } \\
\le & \frac{ 15  \exp( - (\frac{c}{S} - \frac{1}{\log 2}) 
) / ({\exp( \frac{c \log2
}{S} - \log 2 ) - 1}) }{ 1- 1.5 [ \exp(- c \frac{ 1
+ \log 2
}{S}  ) + \exp(- \frac{c}{S} )] } ,
\end{align}
where we use the condition $g(x) - \mE[g] \ge 1$ in the last inequality.
By choosing
\begin{align}
\eta \le  ({8 L_\alpha^{ \frac{2}{\alpha+1} } d^{\frac{\alpha}{\alpha+1}} \max \{1.5 + \log( 1 + 7 /\sqrt{\zeta}), \log(30 ) \} ) } )^{-1} ,   
\end{align}
under the condition $g(x) - \mE[g] \ge 1$,
 we guarantee that $1- 1.5 [ \exp(- c \frac{ 1
+ \log 2
}{S}  ) + \exp(- \frac{c}{S} ) \ge 0.8 $ and $ 15  \exp( - (\frac{c}{S} - \frac{1}{\log 2}) 
) / ({\exp( \frac{c \log2
}{S} - \log 2 ) - 1}) \lesssim \zeta $, thus $ (\frac{V- \overline V}{V})^2 \le  \frac{e \zeta}{16}. $   So we obtain that $\mE \frac{(V-\overline V)^2}{V} \one_{V \le \frac{1}{e} }
 \le \frac{\zeta}{16} $, and thus $\mE \frac{(V-\overline V)^2}{V}  \le \frac{\zeta}{8}. $

To bound $\mE \overline V$, we notice that if $\chi_{\pi^{X|Y}}^2(\hat \pi^{X|Y} ) \le 0.5, $ then $\|\hat \pi^{X|Y} - \pi^{X|Y} \|_\TV \le 1, $ thus by \eqref{eq:low_bound_bar_V}, we get $\mE \overline V \ge \frac{1}{2}.$ Combining it with the bounds $\mE[V] \le 2 $ and $\mE \frac{(V-\overline V)^2}{V}  \le \frac{\zeta}{8}, $ we get $\chi_{ \pi^{X|Y} }^2( \hat \pi^{X|Y} )  \le \zeta$.

 The expected number of the iterations for the rejection sampling step is $\cO(1)$ because
the acceptance probability is $\frac{1}{2} \mE[\overline V] $ (Lemma \ref{lem:dist_relation}).
\end{proof}

We then consider the case where we can only solve the optimization step \ref{step:opt_inacc} inaccurately. The corresponding algorithm and analysis have been discussed in \S\ref{sec:approximate_opt}.
The following theorem extends the results of Theorem \ref{thm:rgo_tv_appro} and \ref{thm:rgo_w2_appro} in \S\ref{sec:approximate_opt}.

 \begin{theorem}[RGO complexity in $\chi^2$-divergence]\label{thm:rgo_chi_opt}
Assume $f$ satisfies \eqref{eq:semi-smooth}. For $\forall \zeta>0$,
if
$$\eta \le 
\left(98  L_\alpha^{\frac{2}{\alpha+1}}d^{\frac{\alpha}{\alpha+1}}(2+ \log (1+ 522/\zeta )) \right)^{-1}
,$$
then Algorithm \ref{alg:rgo_appro} returns a random point $x$ that has $\zeta ~~\chi^2$-divergence  to the distribution proportional to $
\pi^{X|Y}(\cdot | y)
$. Furthermore, if $~0 < \zeta < 0.5 $, then the algorithm access $\cO(1)$ many $f(x)$ in expectation.
 \end{theorem}
 \begin{proof}
Following the proof with accurate optimization, we get 
\begin{align}
\chi_{\pi^{X|Y} }^2( \hat \pi^{X|Y}  ) 
 \le \frac{\mE[(V-\overline V)^2 V^{-1}] \mE[V]}{(\mE \overline V)^2}    
\end{align}
 When bounding $\mE V$, without loss of generality, we assume $ w $ is 0, i.e., the mean of $x$ and $z$ is 0, and $g'(x_y - w)=0$. 
 Denote $\Delta = g(z) - g(x)$, and $\iota = x_y - w $.
 Following \eqref{eq:exp_grad} in the proof of concentration inequality in Theorem \ref{thm:concentration}, we have 
\begin{align}
 \mE[V] 
 & = \mE_{x,z}[\rho]  = \mE_{x,z}[\exp(\Delta)]
 \le \mE_x[ \exp( \pi^2 \eta \|\nabla g(x)\|^2 /8 )] \\
& \overset{\eqref{eq:young_split_appro}}{\le} 
\left(1- \frac{ \pi^2 L_\alpha^2 \eta^2 \alpha}{ 4w}\right)^{-\frac{d}{2}} \exp \left( \frac{ \pi^2}{8} L_\alpha^2 \eta(1-\alpha) w^{\frac{\alpha}{1-\alpha}} \right) \exp\left(\frac{ \|
\iota\|^2_2 / (2\eta) }{{\left(\frac{\pi^2}{4}\eta^2 L_\alpha^2\frac{\alpha}{\omega}\right)}^{-1}-1}\right)
\end{align}
for $\forall w>0$. The only additional term compared to proof of Theorem \ref{thm:rgo_chi} is the right-most term above. 
By choosing $w = (1-\alpha)^{-\frac{1-\alpha}{\alpha}} (\eta d)^{1-\alpha}  $, and denote $b = (1-\alpha)^{-\frac{1-\alpha}{\alpha}} $, the above equation becomes
\begin{align}
 \left(1- \frac{ \pi^2 L_\alpha^2 \eta^{1+\alpha} \alpha (1-\alpha)^{ \frac{1-\alpha}{\alpha} } }{ 4 d^{1-\alpha}}\right)^{-\frac{d}{2}} 
 \exp \left( \frac{ \pi^2}{8} L_\alpha^2 \eta^{1 + \alpha} d^\alpha \right)
 \exp\left(\frac{ \|
\iota\|^2_2  }{ \frac{8 b^2 d^{1-\alpha} }{ \pi^2 \alpha \eta^\alpha L_\alpha^2 } - 2 \eta}\right)
\end{align}
Recall that \eqref{eq:real_s_tv} still holds, i.e. $\|\iota \|^2 \le s:= \frac{d^{\frac{1}{2(1+\alpha)}} }{\RtConsteta L_\alpha^{ \frac{1}{1+\alpha}} } 
  $,  since we use the same Algorithm \ref{alg:rgo_appro}.
We further choose $\eta \le  \frac{1 }{2 L_\alpha^\frac{2}{1+\alpha} d^\frac{\alpha}{1+\alpha} } \le \left( \frac{8 \log(1.2) }{\pi^2 L_\alpha^2 d^\alpha } \right)^\frac{1}{1+\alpha} $, 
  so the above equation is bounded by
\begin{align}
2 \exp \left( \frac{\alpha \pi^2 \|\iota\|^2 \eta^\alpha L_\alpha^2 }{ 4 b^2 d^{1-\alpha}} \right) \le 3.
\end{align}
So we obtain $\mE [V] \le 3.$ Next, we bound the term $\mE[(V-\overline V)^2 V^{-1}] $:
\begin{align}
\mE \frac{(V-\overline V)^2}{V} 
\le \mE \frac{(V-\overline V)^2}{V} \one_{V \le \frac{1}{e} } +  \mE \frac{(V-\overline V)^2}{V} \one_{V > \frac{1}{e}}
\le \mE \frac{(V-\overline V)^2}{V} \one_{V \le \frac{1}{e} } +  e \mE (V-\overline V)^2 .
\end{align}
Following the same logic of \eqref{eq:v_diff_squ_appro} in the proof of Theorem \ref{thm:rgo_w2_appro}, we can get that if $$\eta \le \frac{
1
}{
  98 L_\alpha^{\frac{2}{\alpha+1}}d^{\frac{\alpha}{\alpha+1}}
(2+ \log( 1+ \frac{522 }{ \zeta } )) }, $$
then $ e\mE (V - \overline V)^2 \le \zeta/(24)  . $

We then notice that 
since $V \le 1/e$, by the definition of $V$ and Jensen inequality, it holds that $g(x) - \mE[g] \ge 1. $ 
The term $\mE \frac{(V-\overline V)^2}{V} \one_{V \le \frac{1}{e} }$ satisfies
$
\mE \frac{(V-\overline V)^2}{V} \one_{V \le \frac{1}{e} }
\le \frac{1}{e} \mE \left(\frac{V-\overline V}{V}\right)^2 \one_{g(x) - \mE[g] \ge 1}.
$
Moreover, since $g(x) -\mE[g]$ 
is always no less than 1, we can apply the inequality \eqref{eq:const_concentrate_appro}. With $c= 0.2 $ and $\|x_y - w \| \le s$, we have 
\begin{align}\label{eq:cond_conce_appro}
\Pr (\Delta
> r |x) 
\le 1.5
\exp \left( \frac{s^2 }{ 2 d \eta} -c \cdot \frac{g(x) -\mE[g] + r } {L_\alpha^{ \frac{2}{\alpha+1} } d^{\frac{\alpha}{\alpha+1}} \eta}  \right)  
\end{align}
for $\forall r>0$.

Denote $\bar \Delta = \Delta / \log 2 =  (g(z) - g(x) )/\log 2$, and $S = L_\alpha^{ \frac{2}{\alpha+1} } d^{\frac{\alpha}{\alpha+1}} \eta $. We can write 
\begin{align}
 \frac{V-\overline V}{V} 
\overset{\eqref{eq:v_frac}}{ \le } &  \frac{2 \sum_{i=1}^\infty \exp(i \log 2) \Pr ( \bar \Delta \ge i | x) }{ \exp( \frac{-1 + \mE[g] - g(x)}{\log 2} )  [1- \Pr (  \Delta > \log 2 | x) - \Pr (  g(z) - \mE[g] <-1 ) ] } \\
\overset{ \eqref{eq:cond_conce_appro} }{\le} & \frac{ 3 \sum_{i=1}^\infty \exp(i \log 2) \exp( \frac{s^2 }{ 2 d \eta} - c \frac{g(x) - \mE[g] + i\log 2
}{S} ) \exp( \frac{g(x) - \mE[g] }{\log 2} ) }{ \exp(- \frac{1}{\log 2}) (1- 1.5 [ \exp( \frac{s^2 }{ 2 d \eta} - c \frac{g(x) - \mE[g] + \log 2
}{S}  ) + \exp(\frac{s^2 }{ 2 d \eta} - \frac{c}{S} )]) } \\
\le & \frac{ 15 \sum_{i=1}^\infty \exp(i (\log 2- \frac{c \log2
}{S}) ) \exp( \frac{s^2 }{ 2 d \eta} - (\frac{c}{S} - \frac{1}{\log 2}) (g(x) - \mE[g] )  )  }{ 1- 1.5 [ \exp(\frac{s^2 }{ 2 d \eta} - c \frac{g(x) - \mE[g] + \log 2
}{S}  ) + \exp( \frac{s^2 }{ 2 d \eta} - \frac{c}{S} )] } \\
= & \frac{ 15  \exp( \frac{s^2 }{ 2 d \eta} - (\frac{c}{S} - \frac{1}{\log 2}) (g(x) - \mE[g] )  ) / ({\exp( \frac{c \log2
}{S} - \log 2 ) - 1}) }{ 1- 1.5 [ \exp( \frac{s^2 }{ 2 d \eta} - c \frac{g(x) - \mE[g] + \log 2
}{S}  ) + \exp( \frac{s^2 }{ 2 d \eta} - \frac{c}{S} )] } \\
\le & \frac{ 15  \exp(\frac{s^2 }{ 2 d \eta} - (\frac{c}{S} - \frac{1}{\log 2}) 
) / ({\exp( \frac{c \log2
}{S} - \log 2 ) - 1}) }{ 1- 1.5 [ \exp(\frac{s^2 }{ 2 d \eta} - c \frac{ 1
+ \log 2
}{S}  ) + \exp(\frac{s^2 }{ 2 d \eta} - \frac{c}{S} )] } ,
\end{align}
where we use the condition $g(x) - \mE[g] \ge 1$ in the last inequality.
By by the choice $s$ in \eqref{eq:real_s_tv} and 
\begin{align}
\eta \le  ({16 L_\alpha^{ \frac{2}{\alpha+1} } d^{\frac{\alpha}{\alpha+1}} \max \{1.5 + \log( 1 + 7 /\sqrt{\zeta}), \log(30 ) \} ) } )^{-1} ,   
\end{align}
under the condition $g(x) - \mE[g] \ge 1$,
 we guarantee that $1- 1.5 [ \exp(- c \frac{ 1
+ \log 2
}{S}  ) + \exp(- \frac{c}{S} ) \ge 0.8 $ and $ 15  \exp( - (\frac{c}{S} - \frac{1}{\log 2}) 
) / ({\exp( \frac{c \log2
}{S} - \log 2 ) - 1}) \lesssim \zeta $.
So we obtain that $\mE \frac{(V-\overline V)^2}{V} \one_{V \le \frac{1}{e} }
 \le \frac{\zeta}{24} $, and thus $\mE \frac{(V-\overline V)^2}{V}  \le \frac{\zeta}{12}. $

 To bound $\mE \overline V$, we notice that if $\chi_{\pi^{X|Y}}^2(\hat \pi^{X|Y} ) \le 0.5, $ then $\|\hat \pi^{X|Y} - \pi^{X|Y} \|_\TV \le 1, $ thus by \eqref{eq:low_bound_bar_V}, we get $\mE \overline V \ge \frac{1}{2}.$ Combining it with the bounds  $\mE[V] \le 3$ and $\mE \frac{(V-\overline V)^2}{V}  \le \frac{\zeta}{12}, $ we get $\chi_{\pi^{X|Y}}^2( \hat \pi^{X|Y}  )  \le \zeta$.

 The expected number of the iterations for the rejection sampling step is $\cO(1)$ because
the acceptance probability is $\frac{1}{2} \mE[\overline V] $ (Lemma \ref{lem:dist_relation}).
 \end{proof}

 \subsection{Complexity bounds of proximal sampling for semi-smooth potentials}\label{sec:convegence_chi}

 The following Proposition \ref{prop:chi_strongly_conv_appro}, \ref{prop:chi_appro_all} extend the results in Proposition \ref{prop:strongly_convex}, \ref{prop:lsi}, \ref{prop:pi} to $\chi^2$-divergence notion.
 Moreover, they only assume we have inaccurate optimization step \ref{step:opt_inacc}. Our proofs in this section use the same techniques from  \citet[Theorem 5.1-5.4]{Altschuler2023}.

 \begin{proposition}[Convergence in $\chi^2$ for well-conditioned targets]\label{prop:chi_strongly_conv_appro}
Suppose $f$ is $\beta$-strongly convex and $L_1$-smooth. Let $ \delta \in (0, 1 ), \eta =
\tilde \cO \left( {1}/{ (L_1 \sqrt{d}) }  \right) 
$.
Then Algorithm \ref{algo:AlternatingSampler},  with Algorithm \ref{alg:rgo_appro} as RGO step and initialization $x_0\sim \mu_0$, can find a random point $x_T \sim \hat \mu_T$ such that $\chi^2_{\nu} (\hat \mu_T) \le \delta$
in $$T = \cO  \left( \frac{ L_1 \sqrt{d}}{ \beta }  \log \left( \frac{L_1 \sqrt{d}}{ \beta \delta } \right)  \log \left( \frac{ \sqrt{R_{2, \nu} (\mu_0)}}{ \delta} \right) \right) $$ steps.
Furthermore,  each step accesses only $\cO(1)$ many $f(x)$ queries in expectation.
\end{proposition}
\begin{proof}
Recall that the distributions of the iterations $y_t$ and $x_t$ of the ideal proximal sampler are $\psi_t$ 
and $\mu_t$ respectively (see the discussion at the beginning of \S\ref{sec:convergence}).
Denote the $Y$-marginal of $\pi^{XY}$ as  $\pi^Y$.
By analyzing the simultaneous heat flow,
\citet[\S A.4]{chen2022improved}
shows that the forwards step of the proximal algorithm is a
contraction in \Ren divergence, i.e.
\begin{align}\label{eq:forward}
R_{2, \pi^Y}  (\psi_t)  \le \frac{R_{2,\nu} (\mu_t) }{(1+ \eta\beta )^{1/2}} .
\end{align}
We assume that for $\forall y \in \mR^d$, we have 
\begin{align}
\chi_{\pi^{X|Y} (\cdot | y) }^2( \hat \pi^{X|Y} (\cdot | y)   ) \le \zeta   
\end{align}
using $O(1)$ many $f(x)$ queries in expectation. Then according to the data-processing inequality for \Ren divergence \citep[Lemma 2.3]{Altschuler2023}, strong composition rule for \Ren differential privacy~\citep[Lemma 2.9]{altschuler2022privacy}, we have 
\begin{align}
R_{2, \nu}(\hat \mu_{t+1})  
& \le R_{2, \pi^{XY}}  (\text{law} (\hat X_{t+1}, \hat Y_t)) \\
& \le  R_{2, \pi^Y}(\hat \psi_{t})  + \sup_{y_t \in \mR^d } R_{2, \pi^{X|Y} (\cdot | y_t )  } (\hat \pi^{X|Y} (\cdot | y_t )) \\
& \le R_{2, \pi^Y}(\hat \psi_{t}) + \log(1+ \zeta) \label{eq:backward}
\end{align}
Combining \eqref{eq:forward} and \eqref{eq:backward} gives 
\begin{align}
R_{2, \nu}(\hat \mu_{t+1})  \le \frac{R_{2,\nu} (\hat \mu_t) }{(1+ \eta\beta )^{1/2}} + \log(1+ \zeta) . 
\end{align}
Iterating this bound for $T$ times gives 
\begin{align}
R_{2,\nu} (\hat \mu_T ) 
\le \frac{R_{2,\nu} ( \mu_0 ) }{(1+ \eta\beta )^{T/2}} + \log(1+ \zeta) \sum_{t=0}^{T-1} \frac{1}{(1+ \eta\beta)^{t/2}} 
\le \frac{R_{2,\nu} ( \mu_0 ) }{(1+ \eta\beta )^{T/2}} + \frac{\log(1+ \zeta) }{1 - \frac{1}{\sqrt{ 1+ \eta \beta}} }
\end{align}
This error is at most $\delta$ if
\begin{align}
    T = \cO \left( 
    \frac{1}{\log(1+ \eta \beta )} 
    \log \left( \frac{R_{2,\nu}(\mu_0) }{\delta } \right) \right), ~~~ \zeta = 
    \Theta ( \delta 
 \eta \beta).
\end{align}
According to Theorem \ref{thm:rgo_chi_opt}, plugging in the necessary step size 
$\eta = \cO( \frac{1}{ L_1 \sqrt{d} \log( L_1 \sqrt{d} / (\delta \beta)) }) $
finishes the proof.
\end{proof}

\begin{proposition}[Convergence in $\chi^2$ for non-log-concave targets] \label{prop:chi_appro_all}
We use Algorithm \ref{algo:AlternatingSampler} with Algorithm \ref{alg:rgo_appro} as RGO step.
Let $ \delta \in (0,1),$ and $\nu \propto \exp(-f)$.
\\
1)
If $\nu$ satisfies $C_\LSI$-\ref{eq:lsi} and $L_1$-smooth,
then we need \\
$
T = \cO  \left( \frac{ L_1 \sqrt{d}}{ C_\LSI }  \log \left( \frac{L_1 \sqrt{d}}{ C_\LSI \delta } \right)  \log \left( \frac{ { R_{2, \nu} (\mu_0)}}{ \delta} \right) \right)  
$  to have $\chi^2_{\nu} (\hat \mu_T) \le \delta $.  \\
2) If $\nu$ satisfies $C_\PI$-\ref{eq:pi} and $f$ is $L_\alpha$-$\alpha$-semi-smooth, then 
we need \\
$
T =   \cO \left( 
 \frac{L_\alpha^{\frac{2}{\alpha+1}}d^{\frac{\alpha}{\alpha+1}} }{C_\PI}\log \left(\frac{L_\alpha^{\frac{2}{\alpha+1}} d^{\frac{\alpha}{\alpha+1}} 
}{ C_\PI \delta } \right) \log \left( \frac{\chi_\nu^2(\mu_0)}{\delta } \right) \right)
$ to have $\chi^2_{\nu} (\hat \mu_T) \le \delta $. \\
Furthermore,  each step accesses only $\cO(1)$ many $f(x)$ queries in expectation.
\end{proposition}
\begin{proof}
 1) The proof is the same as of Proposition \ref{prop:chi_strongly_conv_appro} since \eqref{eq:forward} also holds under LSI: 
 $$R_{2, \pi^Y}  (\psi_t)  \le \frac{R_{2,\nu} (\mu_t) }{(1+ \eta C_\LSI )^{1/2}} .$$

 2) \citet[\S A.4]{chen2022improved} shows 
\begin{align}\label{eq:forward_pi}
\chi^2_{ \pi^Y}  (\psi_t)  \le \frac{\chi^2_{\nu} (\mu_t) }{(1+ \eta C_\PI)} .
\end{align}
In the same time, \eqref{eq:backward} still holds as it does not use the LSI assumption. 
Recall that $R_{2,\nu} = \log(1+\chi^2_\nu) $.
Combining \eqref{eq:forward_pi} and \eqref{eq:backward} gives
\begin{align}
\chi^2_{\nu}(\hat \mu_{t+1})  \le 
\left( \frac{1+\zeta}{1 + \eta C_\PI} \right) \chi^2_{\nu} (\hat \mu_t)  + \zeta .
\end{align}
Iterating this bound for $T$ times gives 
\begin{align}
\chi^2_{\nu} (\hat \mu_T ) 
\le \left(\frac{1+\zeta }{1+ \eta\beta }\right)^{T} \chi^2_{\nu} ( \mu_0 ) + \zeta \sum_{t=0}^{T-1} \left( \frac{1+\zeta}{1+\eta C_\PI} \right)^t 
\le \left(\frac{1+\zeta }{1+ \eta\beta }\right)^{T} \chi^2_{\nu} ( \mu_0 ) + \frac{\zeta }{1 - \frac{1 + \zeta }{{ 1+ \eta C_\PI }} }
\end{align}
This error is at most $\delta$ if we choose 
\begin{align}
    T = \cO \left( 
    \frac{1}{\log((1+ \eta C_\PI )/(1+\zeta) )} 
    \log \left( \frac{\chi^2_{\nu}(\mu_0) }{\delta } \right) \right), ~~~ \zeta \le 
    \frac{\delta \eta C_\PI }{2(1+ \eta C_\PI + \delta / 2)}.
\end{align}
Plugging in the choice of $\eta$ in Theorem \ref{thm:rgo_chi_opt} finishes the proof.

\end{proof}

\begin{remark}[Convergence stability]
In the convergence analysis, an interesting question is whether we can allow the number of iterations to go to infinity. Lemma \ref{lem:tv_telecope}, \ref{lem:w2_telecope} do not guarantee stability since the RGO accumulation error linearly depends on the iterations $T$. However, the proof of Lemma \ref{lem:w2_telecope} can be easily strengthened when $\pi$ is strongly-log-concave to bound accumulation error. Moreover, with 
the techniques in Theorem 5.1-5.4 of \citet{Altschuler2023}, our Proposition \ref{prop:chi_strongly_conv_appro}, \ref{prop:chi_appro_all} now ensure stable convergence.
\end{remark}

 \end{appendix}
\end{document}